\newdimen\AAdi%
\newbox\AAbo%
\def\AAk#1#2{\s_etbox\AAbo=\hbox{#2}\AAdi=\wd\AAbo\kern#1\AAdi{}}%
\def\AAr#1#2#3{\s_etbox\AAbo=\hbox{#2}\AAdi=\ht\AAbo\raise#1\AAdi\hbox{#3}}%
\font\tenmsb=msbm10 at 12pt \font\sevenmsb=msbm7 at 8pt
\font\fivemsb=msbm5 at 6pt
\def\Bbb#1{{\tenmsb\fam\msbfam#1}}
\newtheorem{thm}{Theorem}[section]
\newtheorem{lem}[thm]{Lemma}
\newtheorem{cor}[thm]{Corollary}
\newtheorem{pro}[thm]{Proposition}
\newtheorem{defi}[thm]{Definition}
\newcommand{\Section}[2]{\setcounter{equation}{0}
	\allowdisplaybreaks
	\section[#1]{#2}}
\def\f#1#2{\frac{#1}{#2}}
\def\mc#1{\mathcal{#1}}
\def\pd#1#2{\frac {\partial #1}{\partial #2}}
\def\td{\tilde}
\def\a{\alpha}
\def\be{\beta}
\def\p#1{\partial #1}
\def\de{\delta}
\def\ze{\zeta}
\def\ep{\varepsilon}
\def\G{\Gamma}
\def\g{\gamma}
\def\la{\lambda}
\def\La{\Lambda}
\def\om{\omega}
\def\Om{\Omega}
\def\th{\theta}
\def\si{\sigma}
\def\w{\wedge}
\def\R{\Bbb{R}}
\def\C{\Bbb{C}}
\def\lan{\langle}
\def\ran{\rangle}
\def\ra{\rightarrow}
\def\mb{\mathbf}
\def\Im{\text{Im }}
\begin{document}
	\title
	{On complete space-like stationary surfaces in $\R^{3,1}$ with graphical Gauss image}
	
	\author[f]{Li Ou}
	\ead{19110180007@fudan.edu.cn}
	\author[f]{Chuanmiao Cheng}
	\ead{17110180001@fudan.edu.cn}
	\author[f,m]{Ling Yang}
	\ead{yanglingfd@fudan.edu.cn}
	
	\address[f]{School of Mathematical Sciences, Fudan University, Shanghai, 200433, China}
	\address[m]{Shanghai Center for Mathematical Sciences, Shanghai, 200438, China}


	\begin{abstract}
       Concerning the value distribution problem for generalized Gauss maps, we not only
       generalize Fujimoto's theorem \cite{F} to complete space-like stationary surfaces in $\R^{3,1}$, but also
       estimate the upper bound of the number of exceptional values when the Gauss image lies in the graph of a rational
       function $f$ of degree $m$, which is determined by the number of solutions of $f(w)=\bar{w}$, showing
       a sharp contrast to Bernstein type results for minimal surfaces in $\R^4$. Moreover, we introduce the conception
       of conjugate similarity on $SL(2,\C)$ to classify all degenerate stationary surfaces (i.e. $m\leq 1$), and establish
       several structure theorems for complete stationary graphs in $\R^{3,1}$ from the viewpoint of the degeneracy of Gauss maps.

		\begin{keyword}
        Bernstein problem, space-like, stationary, Gauss map, exceptional value, degenerate, conjugate similar, graph.
		\MSC[2010] 53A10, 53C42, 53C45, 32H25, 30C15, 32A22, 51B20.\\
        This work was supported in part by NSFC (Grant No. 11622103).
		\end{keyword}
	\end{abstract}

	
	\maketitle
	
	\tableofcontents
	
	\renewcommand{\proofname}{\it Proof.}
	
	
	\Section{Introduction}{Introduction}

    The classical Berntein theorem \cite{Be} says {\it any entire minimal graph in $\R^3$ has to be an affine plane}. Afterwards,
    the Bernstein problem became one of the central topics in the theory of minimal surfaces, whose aim is to search geometric or
    analytic conditions forcing complete submanifolds of zero mean curvature in Euclidean spaces or other ambient spaces to be totally geodesic.

    For an oriented surface $M$ in $\R^3$, the canonical Gauss map $G:M\ra \mathbb{S}^2$ is conformal whenever $M$ is a minimal surface.
    Observing that $M$ is an entire graph if and only if the image of $M$ under $G$ lies in an open hemisphere, L. Nirenberg
    raised a well-known conjecture: {\it the Gauss image of any nonflat complete minimal surface in $\R^3$ has to be dense in $\Bbb{S}^2$}. Through the efforts
    of R. Osserman \cite{O,O1}, F. Xavier \cite{Xa} and H. Fujimoto \cite{F}, a stronger conclusion for the value distribution of Gauss maps was established: {\it any complete minimal surface whose Gauss map omits at least 5 points in $\Bbb{S}^2$ has to be an affine plane}. This result is best-possible, because of the known examples of complete minimal surfaces whose Gauss maps omit $k$ points with $0\leq k\leq 4$, constructed by
    K. Voss \cite{V}.

Following Gauss's idea, S. S. Chern \cite{Chern} and Estudillo-Romero \cite{E-R} introduced the conception of {\it generalized Gauss maps}
for oriented regular surfaces in $\R^n$ and oriented space-like surfaces in $n$-dimensional Minkowski space $\R^{n-1,1}$, respectively.
For $M\subset \R^n$ (or $\R^{n-1,1}$), the image of $p\in M$ under the Gauss map $G$ is the oriented tangent plane $T_p M\in \mb{G}_{n-2}^2$ (or $\mb{G}_{n-3,1}^{2}$) via parallel translation, where $\mb{G}_{n-2}^2$ (or $\mb{G}_{n-3,1}^{2}$) is the {\it Grassmannian manifold} (or {\it Lorentz Grassmanian manifold})
consisting of oriented planes in $\R^n$ (or $\R^{n-1,1}$) whose induced inner product is positive definite. Let
\begin{equation*}\aligned
Q_{n-2}&:=\{[(z_1,\cdots,z_n)]\in \Bbb{CP}^{n-1}:z_1^2+\cdots+z_n^2=0\},\\
Q_{n-3,1}&:=\{[(z_1,\cdots,z_n)]\in \Bbb{CP}^{n-1}:z_1^2+\cdots+z_{n-1}^2-z_n^2=0\},\\
Q_{n-3,1}^+&:=\{[(z_1,\cdots,z_n)]\in Q_{n-3,1}:|z_1|^2+\cdots+|z_{n-1}|^2-|z_n|^2>0\},
\endaligned
\end{equation*}
then
\begin{itemize}
\item $Q_{n-3,1}$ and $Q_{n-2}$ are both quadrics of $\Bbb{CP}^{n-1}$, and $\chi: [(z_1,\cdots,z_{n-1},z_n)]\in Q_{n-3,1}\mapsto [(z_1,\cdots,z_{n-1},iz_n)]\in Q_{n-2}$ gives a biholomorphic isomorphism between them;
\item $Q_{n-3,1}^+$ is an open subset of $Q_{n-3,1}$;
\item $\mb{G}_{n-2}^2$ ($\mb{G}_{n-3,1}^2$) can be identified with $Q_{n-2}$ ($Q_{n-3,1}^+$) via the canonical embedding into $\Bbb{CP}^{n-1}$.
\end{itemize}
Thereby, $\mb{G}_{n-3,1}^{2}$ can be seen as an open subset of $\mb{G}_{n-2}^2$.
The Gauss map $G$ is holomorphic provided that $M$ is a minimal (or {\it stationary}) surface in $\R^n$ (or $\R^{n-1,1}$), i.e. the mean curvature
field vanishes everywhere on $M$. Here $M\subset \R^{n-1,1}$ ($n\geq 4$) is called stationary since it is neither a local minimizer nor maximizer of the area
functional.

Especially for a minimal surface $M$ in $\R^4$, since $Q_2$ is conformally equivalent to $\Bbb{S}^2\times \Bbb{S}^2$
(see e.g. \cite{Chern,H-O}), the Gauss map $G$ can be written as $(\psi_1,\psi_2)$, where both components are meromorphic functions
on $M$. From this viewpoint, H. Fujimoto \cite{F} obtained the following Bernstein type results:
\begin{thm}(\cite{F})\label{be}
Let $M$ be a nonflat complete minimal surface in $\R^4$, $(\psi_1,\psi_2)$ be the Gauss map of $M$, and $q_i$ be the number
of exceptional values of $\psi_i$ ($i=1$ or $2$).
\begin{itemize}
\item If neither $\psi_1$ nor $\psi_2$ are constant, then $\min\{q_1,q_2\}\leq 3$ or $q_1=q_2=4$.
\item If either $\psi_1$ or $\psi_2$ is constant, then $q_i\leq 3$ for the nonconstant $\psi_i$.
\end{itemize}
\end{thm}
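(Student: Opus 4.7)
The plan is to follow Fujimoto's original strategy for the Bernstein problem in $\R^3$, adapted to the product structure $Q_2\cong \cp{1}\times \cp{1}$. First, lift $M$ to its universal cover $\tilde M$ equipped with the pulled-back induced metric, which remains complete and nonpositively curved. By the uniformization theorem, $\tilde M$ is biholomorphic either to $\C$ or to the unit disk $\De$. If $\tilde M=\C$, each meromorphic component $\psi_i:\C\to \cp{1}$ omits at most two values by Picard's theorem, so $q_i\le 2$ and both assertions hold trivially; the essential case is therefore $\tilde M\cong\De$, where the induced metric is complete on a bounded domain, and we must show that this forces the stated bounds on the $q_i$.

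Introduce the Weierstrass data $\phi_k=\partial X_k/\partial z$ ($k=1,\dots,4$) satisfying $\sum_k\phi_k^2=0$; the Gauss map is $[\phi_1{:}\phi_2{:}\phi_3{:}\phi_4]\in Q_2$, and the biholomorphism $Q_2\cong \cp{1}\times \cp{1}$ yields explicit expressions for $\psi_1$ and $\psi_2$ in terms of the $\phi_k$ (for instance, $\psi_1=(\phi_3+i\phi_4)/(\phi_1-i\phi_2)$ and $\psi_2=-(\phi_3-i\phi_4)/(\phi_1-i\phi_2)$). Using these, the induced metric factorises as
\[ds^2=\tfrac12\,|h|^2\,(1+|\psi_1|^2)(1+|\psi_2|^2)\,|dz|^2,\]
with $h=\phi_1-i\phi_2$ holomorphic on $\De$. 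Nonflatness of $M$ ensures that at least one $\psi_i$ is nonconstant.

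The analytic core is Fujimoto's extended Ahlfors--Schwarz lemma: if $\psi:\De\to \cp{1}$ is nonconstant meromorphic, omits distinct values $a_1,\dots,a_q$, and $\delta\in(0,1)$ satisfies $q(1-\delta)>2$, then there exists $C_\delta>0$ with
\[\frac{|\psi'(z)|}{\prod_{j=1}^q\|\psi(z),a_j\|^{1-\delta}}\cdot(1+|\psi(z)|^2)^{(q(1-\delta)-2)/2}\le\frac{C_\delta}{1-|z|^2},\]
where $\|\cdot,\cdot\|$ denotes the spherical chordal distance on $\cp{1}$. Applying this to each nonconstant $\psi_i$ with a parameter $\delta_i$ satisfying $q_i(1-\delta_i)>2$ and substituting into the Weierstrass formula for $ds^2$, a standard line-integral argument shows that, whenever positive exponent slack remains, every divergent curve in $\De$ would have finite $ds^2$-length, contradicting completeness. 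Careful bookkeeping of the exponents then produces the claimed dichotomy $\min\{q_1,q_2\}\le 3$ or $q_1=q_2=4$ in the two-nonconstant case, and $q_i\le 3$ in the one-constant case.

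The main obstacle is the exponent bookkeeping at the borderline configurations. At $(q_1,q_2)=(4,4)$ the admissible exponents $\delta_i<1/2$ combine so that the slack vanishes in the limit and no contradiction can be extracted, consistent with the theorem permitting this case. In the one-constant scenario, the factor $(1+|\psi_j|^2)$ becomes constant and $ds^2$ formally resembles the induced metric of a minimal surface in $\R^3$, for which Fujimoto's bound is $4$; obtaining the sharper bound $3$ requires exploiting additional structure specific to $\R^4$ --- for example, that the surface becomes a holomorphic (or anti-holomorphic) curve with respect to a complex structure on $\R^4$ determined by the constant value of $\psi_j$ --- which permits a refined Ahlfors--Schwarz estimate. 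Tracking this refinement through the line-integral argument is the technically delicate part of the proof.
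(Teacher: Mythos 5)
First, a point of reference: the paper does not prove Theorem \ref{be} at all --- it is quoted from Fujimoto \cite{F} and used as a black box, the paper's own contribution (Theorem \ref{be0}) being deduced from it via the dual immersion $\mb{x}\leftrightarrow \mb{x}^*$ of \S\ref{S3}. So your proposal can only be measured against Fujimoto's published argument and against the analogous arguments the paper does carry out (Theorems \ref{vdp} and \ref{be1}). At the level of strategy you have reconstructed that argument correctly: uniformize the universal cover, dismiss the case $\td{M}\cong\C$ by Picard, write the metric as $ds^2=2(1+|\psi_1|^2)(1+|\psi_2|^2)|dh|^2$, and on the disk play an Ahlfors--Schwarz type estimate for each nonconstant $\psi_i$ against completeness along a divergent path.

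There is, however, a genuine error in your handling of the case where one component is constant. You claim that for $\psi_2\equiv c$ the metric ``formally resembles the induced metric of a minimal surface in $\R^3$,'' whose bound is $4$, so that the stated bound $3$ must come from a refined estimate exploiting extra structure. This is backwards. The $\R^3$ metric is $|h|^2(1+|g|^2)^2|dz|^2$, with the Gauss-map factor \emph{squared}; when $\psi_2\equiv c$ the $\R^4$ metric is $2(1+|c|^2)(1+|\psi_1|^2)|dh|^2$, with the factor to the \emph{first} power. (A minimal surface in $\R^3$ sitting in $\R^4$ has $\psi_1\psi_2=-1$ with both components nonconstant, which is how the square reappears there.) It is precisely this drop in exponent from $2$ to $1$ that lowers the bound from $4$ to $3$ in the otherwise unchanged line-integral argument --- the general principle being that completeness of $|h|^2(1+|\psi|^2)^m|dz|^2$ on $\Bbb{D}$ forces $q\leq m+2$; compare the paper's proof of Theorem \ref{vdp}, where the auxiliary metric (\ref{metric6}) has exponent $1$ and the same mechanism yields $3$. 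The refinement you propose to track does not exist, and pursuing it would stall the proof. Separately, in the two-nonconstant case the entire quantitative content --- that completeness forces $\f{1}{q_1-2}+\f{1}{q_2-2}\geq 1$ whenever $q_1,q_2\geq 3$, which is exactly the dichotomy ``$\min\{q_1,q_2\}\leq 3$ or $q_1=q_2=4$'' --- is deferred to unspecified ``bookkeeping'': one must apportion a single unit of exponent between the two factors $(1+|\psi_i|^2)$, and the apportionment is feasible (yielding a finite-length divergent path and hence a contradiction) precisely when $\f{1}{q_1-2}+\f{1}{q_2-2}<1$. You also omit the standard detour through the universal cover of $\Bbb{D}$ minus the zero set of $\psi_i'$ (cf.\ the proofs of Theorems \ref{vdp} and \ref{be1}). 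As written, the proposal establishes the theorem only in the case $\td{M}\cong\C$.
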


Similarly, $Q_{1,1}^+\subset Q_{1,1}\cong \Bbb{S}^2\times \Bbb{S}^2$ enables us to express the Gauss map $G$ of a space-like stationary surface $M\subset \R^{3,1}$ as $(\psi_1,\psi_2)$, but here the two components should satisfy an additional constraint $\psi_2\neq \bar{\psi}_1$. As pointed out by Ma-Wang-Wang \cite{M-W-W}, $\psi_1(p)$ and $\psi_2(p)$ correspond to the 2 null vectors $\mb{y},\mb{y}^*$ in the normal plane $N_p M$, and $\psi_2\neq \bar{\psi}_1$ is equivalent to
the linear independence of $\mb{y}$ and $\mb{y}^*$. In the same paper, the authors asked whether we can estimate the number of exception values
of $\psi_1$ and $\psi_2$ for complete nonflat stationary surfaces in $\R^{3,1}$, as H. Fujimoto \cite{F} has done.

In this paper, we utilize the fundamental tool of Weierstrass type representations to construct a one-to-one correspondence
$M\leftrightarrow M^*$ between all simply-connected {\it generalized stationary surfaces} in $\R^{3,1}$ and all simply-connected {\it generalized minimal surfaces} in $\R^4$, where both $M$ and $M^*$ may have singularities, such that $\chi:Q_{1,1}\ra Q_{2}$ gives a one-to-one correspondence between the Gauss images of $M$ and $M^*$. Moreover, $M^*$ becomes a complete minimal surface whenever the metric of $M$ is positive-definite everywhere and complete.
By this argument, we generalize Fujimoto's theorem \cite{F} and answer the above problem raised by Ma-Wang-Wang \cite{M-W-W}:
\begin{thm}\label{be0}
Let $M$ be a nonflat complete space-like stationary surface in $\R^{3,1}$, $(\psi_1,\psi_2)$ be the Gauss map of $M$, and $q_i$ be the number
of exceptional values of $\psi_i$ ($i=1$ or $2$). If neither $\psi_1$ nor $\psi_2$ are constant, then $\min\{q_1,q_2\}\leq 3$ or $q_1=q_2=4$.
\end{thm}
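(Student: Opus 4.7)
The strategy is to transfer the value distribution problem from the stationary setting in $\R^{3,1}$ to the minimal setting in $\R^4$, where Fujimoto's Theorem \ref{be} applies directly. First, I would pass to the universal cover $\tilde{M}\to M$ equipped with the pulled-back induced metric; then $\tilde{M}$ is again a complete, nonflat, space-like stationary surface in $\R^{3,1}$, and its Gauss map is the composition of the covering projection with that of $M$, so the images of the two Gauss maps, and in particular the numbers of exceptional values of each component, coincide. This reduces the problem to the simply-connected case.

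Next, using the Weierstrass-type correspondence announced in the excerpt, I would associate to the simply-connected $M$ a simply-connected generalized minimal surface $M^*$ in $\R^4$, so that the biholomorphic isomorphism $\chi: Q_{1,1}\to Q_2$ identifies the Gauss images of $M$ and $M^*$. Since $M$ is space-like, its induced metric is positive-definite everywhere, and by the excerpt $M^*$ is then a genuine complete minimal surface in $\R^4$. Nonflatness of $M$ amounts to non-constancy of its Gauss map, which is preserved by the biholomorphism $\chi$, so $M^*$ is nonflat as well.

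The key step is then to verify that the two splittings $Q_{1,1}\cong \Bbb S^2\times\Bbb S^2$ and $Q_2\cong \Bbb S^2\times\Bbb S^2$ can be arranged compatibly, so that $\chi$ matches the components $(\psi_1,\psi_2)$ of the Gauss map of $M$ with the corresponding components of the Gauss map of $M^*$ (possibly after a global swap of the two factors). Granted this, the numbers of exceptional values of the two components are preserved under the correspondence, and using the hypothesis that neither $\psi_1$ nor $\psi_2$ is constant, the first alternative of Theorem \ref{be} applied to $M^*$ yields precisely $\min\{q_1,q_2\}\leq 3$ or $q_1=q_2=4$.

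The main obstacle will be the careful componentwise identification of the two Gauss maps through $\chi$, together with the transfer of completeness from the Lorentzian side to the Euclidean side; these are exactly the points at which the space-like hypothesis (ensuring positive-definiteness of the induced metric) and the completeness hypothesis on $M$ genuinely enter the argument, and they are precisely what the Weierstrass-type correspondence in the paper is designed to handle.
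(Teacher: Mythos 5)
Your proposal is correct and follows essentially the same route as the paper: reduce to the simply-connected case via the universal cover, pass to the dual immersion $\mb{x}^*$ in $\R^4$ (which is complete because $(ds^*)^2\geq ds^2$ and shares the same W-data $(\psi_1,\psi_2,dh)$, so the componentwise identification you flag as the main obstacle is automatic), and apply Theorem \ref{be}.
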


It worths to note that, any minimal surface $M\subset \R^3$ is not only a minimal surface in $\R^4$, but also a stationary surface
in $\R^{3,1}$,
and in both situations,  $q_1,q_2$ are both equal to
the number of values in $\Bbb{S}^2$ that the classical Gauss map does not take. Therefore, the known examples of minimal surfaces in $\R^3$
prevent us to improve Theorem \ref{be}-\ref{be0} without additional assumptions. And the conclusions of Theorem \ref{be} and Theorem \ref{be0}
seem to be 'same' in the general setting.
	
However, under a premise that the Gauss image of $M$ totally lies in a prescribed non-extendable complex curve $\G\subset Q_{1,1}$,
we may establish Bernstein type theorems whose conditions are strictly weaker than the corresponding theorems for complete minimal surfaces
in $\R^4$ whose Gauss images lie in $\chi(\G)$. Firstly, we choose hyperplanes of $Q_{1,1}$, whose corresponding stationary surfaces are said to be {\it degenerate}, to be the candidates of $\G$, due to several reasons as follows:
\begin{itemize}
\item Minimal surfaces in $\R^3$ and maximal surfaces in $\R^{2,1}$ can both be seen as degenerate stationary surfaces in $\R^{3,1}$; however,
there exists no nonflat complete maximal surface in $\R^{2,1}$, in contrast to an abundance of examples of complete minimal surfaces in $\R^3$.
(In fact, as shown by E. Calabi \cite{Ca} and Cheng-Yau \cite{C-Y}, any complete maximal space-like hypersurface in $\R^{n,1}$ has to be affine linear.)
\item Hoffman-Osserman \cite{H-O} pointed out, a degenerate minimal surface in $\R^4$ is either a complex curve with respect to
an orthogonal complex structure on $\R^4$, or lies in a $1$-parameter family of degenerate ones determined by a minimal surface in $\R^3$.
On the contrary, there exist at least 3 different types of degenerate stationary surfaces in $\R^{3,1}$, having quite distinct geometric properties; this is the main topic of \cite{A-V1} by Asperti-Vilhena.
\item As shown by R. Osserman \cite{O2} and Ma-Wang-Yang \cite{M-W-Y}, any entire minimal (stationary) graph in $\R^4$ ($\R^{3,1}$) over $\R^2$
has to be degenerate. Thereby, it is an interesting problem to further investigate the relationship between the graphical conditions and
the degeneracy of Gauss maps.
\end{itemize}

Our investigation on degenerate stationary surfaces begins with the classification of hyperplanes in $Q_{1,1}$. Let
$H_A$ and $H_B$ be 2 hyperplanes with respect to 2 elements $[A],[B]$ in the complex projective space, respectively (see (\ref{hp1})),
then $H_A$ and $H_B$ are equivalent under the action of the Lorentz transformation group $SO^+(3,1)$ if and only if
$[A],[B]$ lie in the same orbit of $SO^+(3,1)$. Thus, the classification problem of hyperplanes reduces to
determining the orbit space of $SO^+(3,1)$-action. This is a traditional idea originated with Hoffman-Osserman \cite{H-O} to
dealing with the classification of hyperplanes in $Q_2$. In this paper, we adopt an alternative approach as follows.
Let $M\subset \R^{3,1}$ be a degenerate stationary surface whose Gauss image lies in $H_A$,
then
$$[A]\in Q_{1,1}\Leftrightarrow\ \text{Either } \psi_1\text{ or }\psi_2\text{ is constant}\Leftrightarrow\ K\equiv 0$$
$$\Leftrightarrow\ M\text{ is 2-degenerate}\ \Leftrightarrow\ M\text{ lies in }\R^{2,0}.$$
(Here $K$ is the Gauss curvature of $M$, $M$ is {\it $2$-degenerate} means its Gauss image lies in the intersection of 2 distinct hyperplanes,
and $\R^{2,0}$ is the direct sum of a Euclidean plane and a null line, see Theorem \ref{t_r2} for details.)
Otherwise, $H$ is an entire graph of a M\"{o}bius transformation over $\Bbb{S}^2$ (see Proposition \ref{hp2}), i.e.
there exists $S=\begin{pmatrix} a & b\\c & d\end{pmatrix}\in SL(2,\C)$, such that
$\psi_2=\mc{M}_S(\psi_1):=\f{a\psi_1+b}{c\psi_1+d}$. The constraint
$\psi_2\neq \bar{\psi}_1$ implies $\psi_1$ cannot take values in
$$E_S=\{w:\mc{M}_S(w)=\bar{w}\},$$
which is completely determined by the {\it conjugate eigenvectors} of $S$ (see Proposition \ref{conj2}).
Moreover, the graph of $\mc{M}_{S_1}$ are equivalent to the graph of $\mc{M}_{S_2}$ under $SO^+(3,1)$-action if and only if
$S_1$ and $S_2$ are {\it conjugate similar} to each other (see Proposition \ref{conj3}).
 By carefully examining the canonical forms
of the conjuate similarity in $SL(2,\C)$, we classify all such graphs in $Q_{1,1}$ in Theorem \ref{class3}, and hence an arbitrary nonflat degenerate stationary surface $M\subset \R^{3,1}$
must belong to one and only one of the following categories:
\begin{itemize}
\item $M$ is a minimal surface in $\R^3\Leftrightarrow [A]$ is {\it totally real} and $E_S=\emptyset$;
\item $M$ is a maximal surface in $\R^{2,1}\Leftrightarrow$ $|E_S|=\infty$;
\item $M$ is a {\it degenerate stationary surface of hyperbolic type} $\Leftrightarrow$ $P_A$ is space-like $\Leftrightarrow$ $|E_S|=2$;
\item  $M$ is a {\it degenerate stationary surface of elliptic type} $\Leftrightarrow$ $P_A$ is time-like $\Leftrightarrow$ $[A]$ is non-totally real and $|E_S|=\emptyset$;
\item  $M$ is a {\it degenerate stationary surface of parabolic type} $\Leftrightarrow$ $P_A$ is light-like $\Leftrightarrow$ $|E_S|=1$.
\end{itemize}
(Here $[A]$ is totally real means the real part and the imaginary part of $A$ is linear dependent, otherwise $P_A$ is defined as the plane spanned by  the 2 vectors.)

Theorem \ref{class3} enables us to get Weierstrass type representations for all types of degenerate stationary surfaces in $\R^{3,1}$.
Afterwards, by the arguments of R. Osserman \cite{O-Survey}, Al\'ias-Palmer \cite{A-P} and H. Fujimoto \cite{F}, we arrive at
the following results on the number of exception values of the Gauss maps:
\begin{thm}\label{be4}
Let $M$ be a complete degenerate space-like stationary surface in $\R^{3,1}$, which is not an affine plane, $(\psi_1,\psi_2)$
be the Gauss map of $M$. Assume $\psi_i$ omits $q_i$ points ($i=1$ or $2$), then
\begin{itemize}
\item If $M$ is 2-degenerate, then $q_i\in \{1,2\}$ for the nonconstant $\psi_i$. 
\item If $M$ is a nonflat degenerate stationary surface of hyperbolic type, then $q_1=q_2=2$. 
\item If $M$ is a minimal surface in $\R^3$ or a degenerate stationary surface of elliptic type, then
$q_1=q_2\in \{0,1,2,3,4\}$. 
\item If $M$ is a degenerate stationary surface of parabolic type, then $q_1=q_2\in \{1,2,3\}$. 
\end{itemize}
\end{thm}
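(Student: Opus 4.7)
The plan is to treat each of the five cases in the classification from Theorem \ref{class3} separately, by plugging in the explicit Weierstrass-type representation available there and applying an appropriately adapted value-distribution argument. A crucial preliminary observation is that outside the 2-degenerate case one has $\psi_2=\mc{M}_S(\psi_1)$ for a M\"obius transformation $\mc{M}_S$, so $q_1=q_2$ automatically; moreover $\psi_1$ is forbidden from taking values in $E_S$, which gives the universal lower bound $q_1\geq|E_S|$.

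For the minimal-in-$\R^3$ and elliptic cases ($|E_S|=0$), the approach is to normalize $S$ by an $SO^+(3,1)$-transformation into its canonical (antipodal, or fixed-point-free M\"obius involution) form, insert the Weierstrass data into the complete induced metric, and apply Fujimoto's original argument \cite{F} verbatim to the single meromorphic function $\psi_1$ on the complete Riemann surface $M$, yielding $q_1\leq 4$. For the parabolic case ($|E_S|=1$) I would normalize the forbidden value to $\infty$, making $\psi_1$ holomorphic; the completeness of the induced metric together with a refinement of Fujimoto's construction of a negatively curved pseudo-metric — in which the forbidden value at $\infty$ contributes its own share of the defect — gives the bound $q_1\leq 3$. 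For the hyperbolic case ($|E_S|=2$), after normalizing the forbidden values to $0,\infty$ the function $\psi_1$ can be written locally as $\psi_1=e^h$ for a meromorphic $h$, and the Weierstrass integrand takes an especially rigid form; a completeness-versus-curvature argument along the lines of Al\'ias--Palmer \cite{A-P}, combined with the observation that any further omitted value of $\psi_1$ produces an omitted value of $h$, would force $q_1=2$ exactly. Finally, for the 2-degenerate case $M$ lies in the $3$-dimensional degenerate subspace $\R^{2,0}$; the nonconstant component automatically omits the conjugate of the constant one, giving $q_i\geq 1$, and a tailored Osserman-type argument \cite{O-Survey,A-P} exploiting the null direction of $\R^{2,0}$ supplies the upper bound $q_i\leq 2$.

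The main obstacle I anticipate is the sharp upper bound $q_1\leq 2$ in the hyperbolic case. Unlike the elliptic and $\R^3$ settings, where Fujimoto's bound of $4$ follows from a black-box application of the classical theory, here one must show that the two forced exceptional values in $E_S$ are literally the only possible ones; this requires directly exploiting the very restrictive form of the Weierstrass data coming from the space-like condition on $P_A$, together with completeness, to rule out any further exceptional value by contradicting the existence of the induced complete conformal metric — a delicate balance not present in the other cases. A secondary technical difficulty is the 2-degenerate case, where the Weierstrass formalism partially collapses and one must carefully track how completeness of $M$ inside a degenerate $3$-dimensional subspace restricts the remaining $1$-form data.
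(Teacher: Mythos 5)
Your overall architecture (case-by-case via the classification, the identity $q_1=q_2$ and the lower bound $q_1\geq|E_S|$ from $\psi_2=\mc{M}_S(\psi_1)$, reduction of the elliptic case to Fujimoto's theorem for minimal surfaces in $\R^3$, and a Fujimoto-type pseudo-metric refinement for the parabolic case) coincides with what the paper does; for the elliptic case the paper routes through the Al\'ias--Palmer deformation to an honest minimal surface in $\R^3$ with the same W-data and a uniformly comparable complete metric, rather than rerunning Fujimoto's argument, but that is a cosmetic difference.

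The genuine gap is in the hyperbolic case, precisely where you flag the main obstacle. Writing $\psi_1=e^h$ \emph{locally} and invoking an unspecified ``completeness-versus-curvature argument along the lines of Al\'ias--Palmer'' cannot produce the sharp bound $q_1=2$: the Al\'ias--Palmer deformation is the elliptic-case tool, and any Fujimoto-style defect count applied to the two forced values $0,\infty$ plus hypothetical extra ones only yields an upper bound strictly larger than $2$. The step you are missing is structural, not value-distributional. Because $E_S=\{0,\infty\}$, both $\psi:=\psi_1$ and $dh$ are zero-free, so $d\hat{s}^2=|\psi|^2|dh|^2$ is a complete \emph{flat} metric on the universal cover whose density $\td{\psi}f$ (with $d\td{h}=f\,dz$) is a nowhere-vanishing holomorphic function. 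Osserman's Lemma 8.5 then rules out the disk as universal cover, and Lemma 9.6 on $\C$ forces $\td{\psi}f\equiv\mathrm{const}$; hence $\td{\phi}_1,\td{\phi}_2$ are constant multiples of $dz$, $M$ is an entire graph over a space-like plane (this is Theorem \ref{grph}), and $\psi_1$ is \emph{globally} a nowhere-zero entire function $e^{-\be}$. Only at this point does your Picard endgame work, and note that a single extra omitted value $c\neq 0,\infty$ of $\psi_1$ forces $\be$ to omit the entire coset $-\log c+2\pi i\Bbb{Z}$, i.e.\ infinitely many values, so Picard gives $\be$ constant and $M$ flat. Without the globalization step the local representation $e^h$ gives you nothing. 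The same Osserman-lemma mechanism (flatness plus completeness forces the universal cover to be $\C$ and reduces the data to a single entire function $f$, with the nonconstant $\psi_i=f^{-1}$ omitting $0$ and, by Picard, at most one more point) is what the paper uses for the $2$-degenerate case in Theorem \ref{t_r2}; your ``tailored Osserman-type argument exploiting the null direction'' should be made to say exactly this.
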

It worths to note that, except for the case $E_S=\emptyset$, the conditions of the above Bernstein type results are
strictly weaker than corresponding theorems for degenerate minimal surfaces in $\R^4$. (In fact, let $M\subset \R^{3,1}$ be a nonflat degenerate
stationary surface of whether hyperbolic or parabolic type, $M^*\subset \R^4$ is a degenerate minimal surface that can be deformed into a minimal surface in $\R^3$; we can prove $q_1=q_2\in \{0,1,2,3,4\}$ for such surfaces provided the nonflatness and completeness, as in the proof of Theorem \ref{be2}.)

Meanwhile, we study complete stationary graphs in $\R^{3,1}$. By showing the existence of global isothermal parameters as in \cite{O2},
we prove a universal conclusion for whatever kind of entire graphs: {\it any entire space-like stationary graph over a space-like (or time-like, light-like) plane should be
a complete degenerate stationary surface of hyperbolic (or elliptic, parabolic) type} (See Theorem \ref{grph}, Theorem \ref{graph1} and Theorem \ref{graph4}). On the other hand, the 3 types of graphs have different features
as follows:
\begin{itemize}
\item Each complete degenerate surface in $\R^{3,1}$ of hyperbolic type (including the 2-degenerate case) has to be an entire graph over a space-like plane (see Theorem \ref{t_r2}
and Theorem \ref{grph}). However, this conclusion does not hold for elliptic and parabolic cases.
\item There exists a constellation of complete stationary graphs over domains of a time-like plane with finite total Gauss curvature, all of which are degenerate stationary surfaces of elliptic type induced by rational functions satisfying the following condition: both $h$ and $h^{-1}$ are derivatives of rational functions (see Theorem \ref{graph-time}).
    On the contrary, there exist no nonflat complete degenerate stationary surface of whether hyperbolic or parabolic type with finite total Gauss curvature (see Theorem \ref{grph} and Remarks of Theorem \ref{graph3}).
\item In contrast to the hyperbolic and elliptic cases, there exist nonflat algebraic entire graphs over a light-like plane. Moreover, these surfaces
can be characterized as complete algebraic degenerated stationary surfaces of parabolic type (see Theorem \ref{grph}, Theorem \ref{graph1} and Theorem \ref{graph3}).
\end{itemize}

Next, we generalize Theorem \ref{be4} to complete stationary surfaces in $\R^{3,1}$ with {\it rational graphical Gauss image},
showing what it reveals are not isolated phenomena:
\begin{thm}
Let $M$ be a nonflat complete space-like stationary surface in $\R^{3,1}$, whose Gauss map $(\psi_1,\psi_2)$ satisfies
$\psi_2=f(\psi_1)$ with $f$ a rational function of degree $m$, then the number $q_1$ of the exceptional values of $\psi_1$
should satisfy $|E_f|\leq q_1\leq m-|E_f|+3$, where $E_f$ consists of all solutions of $f(w)=\bar{w}$.
\end{thm}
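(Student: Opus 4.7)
The lower bound $|E_f|\leq q_1$ is immediate from the space-like hypothesis: the inequality $\psi_2\neq\overline{\psi_1}$ holds pointwise on $M$, which forbids $\psi_1$ from ever taking a value $w$ satisfying $f(w)=\bar w$, so $E_f$ is contained in the exceptional set of $\psi_1$.

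For the upper bound, the idea is to transfer the problem to the Euclidean $\R^4$ setting. Using the Weierstrass-type correspondence $M\leftrightarrow M^*$ established earlier in the paper, I would pass to the associated complete nonflat minimal surface $M^*\subset\R^4$; under the biholomorphism $\chi:Q_{1,1}\to Q_2$ the numbers of exceptional values of the two Weierstrass components are preserved, and the relation $\psi_2=f(\psi_1)$ on $M$ descends to an analogous rational relation of the same degree $m$ between the corresponding components on $M^*$. Fujimoto's theorem (Theorem~\ref{be}) applied to $M^*$ then yields the dichotomy $\min\{q_1,q_2\}\leq 3$ or $q_1=q_2=4$. Each exceptional value of $\psi_2$ pulls back under $f$ to a full $m$-element fibre of exceptional values of $\psi_1$, while any exceptional value of $\psi_1$ lying outside $E_f$ contributes to a partial fibre over $A_2$; combining this fibre bookkeeping with the forced inclusion $E_f\subseteq A_1$ and with Fujimoto's dichotomy should produce the desired bound $q_1\leq m-|E_f|+3$.

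The main obstacle will be the sharpness of this final combination step. A naïve count only yields $q_1\leq |E_f|+m q_2\leq |E_f|+3m$, which is much coarser than the target $m-|E_f|+3$. Bridging this gap very likely requires rerunning a Fujimoto/Ahlfors–Schwarz-type defect-relation argument directly on $\psi_1$, viewed as a holomorphic map from the complete Riemann surface $M$ into $\cp{1}\setminus E_f$: the $|E_f|$ forced punctures already tighten the hyperbolic geometry of the target, while the degree-$m$ branched covering encoded in $\psi_2=f(\psi_1)$ contributes an extra $m$ to the admissible omission budget, explaining both the $-|E_f|$ and the $+m$ appearing in the bound. Carrying out this sharpened defect computation in the explicit Weierstrass framework for space-like stationary surfaces in $\R^{3,1}$, and in particular verifying that the forced exceptional set $E_f$ really tightens Fujimoto's estimate term-for-term rather than merely bounding it additively, is where the real technical work will lie.
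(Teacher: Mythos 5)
Your lower bound is exactly the paper's: $\psi_2\neq\bar{\psi}_1$ holds everywhere on a space-like surface, so $\psi_1$ must omit $E_f$ and $q_1\geq |E_f|$. For the upper bound, however, what you have written is a plan rather than a proof, and the single idea that actually produces the exponent $m-|E_f|+3$ is absent. Your first route (pass to the dual minimal surface $M^*\subset\R^4$, apply Fujimoto's dichotomy, and do fibre bookkeeping) is rightly discarded, and in fact it is even weaker than you suggest: omitted values of $\psi_1$ force $\psi_2=f(\psi_1)$ to omit a value only when an \emph{entire} fibre of $f$ is omitted, so this route can yield no bound on $q_1$ at all. Your second route --- a direct Ahlfors--Schwarz/Fujimoto argument on $\psi_1$ --- is indeed what the paper does, but the mechanism is not a tightened hyperbolic metric on the target $\cp{1}\setminus E_f$; it is the induced metric on $M$ itself. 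Since $ds^2=2|\bar{\psi}_1-\psi_2|^2|dh|^2=2|f(\psi_1)-\bar{\psi}_1|^2|dh|^2$, one first normalizes (using Proposition \ref{ef} to guarantee $E_f\neq\emptyset$ for $m\geq 2$, and a conjugate-similar M\"{o}bius change) so that $\infty\in E_f$, writes $E_f=\{a_1,\dots,a_l,\infty\}$, and bounds $|f(w)-\bar w|\leq C\big(\prod_i|w-a_i|\big)\big(\prod_j|w-b_j|^{-r_j}\big)(1+|w|^2)^{(m-l)/2}$ with $b_j$ the poles of $f$. Completeness of $ds^2$ then forces completeness of the auxiliary metric $d\hat{s}^2=\big(\prod_i|\psi-a_i|^2\big)(1+|\psi|^2)^{m-l}|\om|^2$, where $\om$ is a nowhere-vanishing holomorphic $1$-form absorbing the zeros of $dh$ against the poles of $f(\psi)$.

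It is against $d\hat{s}^2$ that the Fujimoto machine is run: assuming $\psi_1$ also omits $c_1,\dots,c_s\notin E_f$, the test function $\eta=g^{\f{1}{1-p}}(\psi')^{-\f{p}{1-p}}\prod_i(\psi-a_i)^{\f{1+\be}{1-p}}\prod_j(\psi-c_j)^{\f{\be}{1-p}}$ produces a divergent path of finite $d\hat{s}$-length precisely when the exponent condition $\be<p<(l+s)\be-(m-l)$ admits a solution with $p\in(\f{1}{2},1)$ and $\be\in(0,1)$, i.e. precisely when $s+l+1\geq m-l+3$; this is the exact threshold $q_1\leq m-|E_f|+3$. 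Note that each point of $E_f$ enters the exponent budget \emph{twice} --- once as an omitted value and once through the vanishing of $|f(w)-\bar w|$ --- which is where the $-|E_f|$ comes from, while the degree enters as the growth exponent $m-l$ at infinity that must be compensated. Without constructing $d\hat{s}^2$ and this weighting, the ``sharpened defect computation'' you defer to at the end is the entire content of the theorem. Two further points your outline omits: the cases $m\leq 1$ are not reached by this argument and must be read off from the degenerate-surface theorems, and the branch locus $E=\{\psi'=0\}$ (nonempty in general) requires the separate covering-space argument with $p>\f{1}{2}$.
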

For $m\geq 2$, the proof of the above conclusion relies on the discreteness and nonempty of $E_f$. Moreover, in virtue of
the theory of differential topology, we show $E_f$ has at least $m-1$ points and study the local behavior of $f$ near
each point of $E_f$ (see Proposition \ref{ef} and its proof), and it follows that:
\begin{thm}
Let $M$ be a complete space-like stationary surface in $\R^{3,1}$ whose Gauss image lies in the graph of a rational function
$f$ of degree $m\geq 2$, then each one of the following additional conditions forces $M$ to be an affine space-like plane:
$$(a)\ m\geq 6;\quad (b)\ m\leq 5\text{ and }|E_f|> \f{m+3}{2};\quad (c)\ \int_M |K| dA_M<+\infty. $$
\end{thm}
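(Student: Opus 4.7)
The plan is to combine the two-sided estimate $|E_f|\leq q_1\leq m-|E_f|+3$ from the preceding theorem with the lower bound $|E_f|\geq m-1$ supplied by Proposition~\ref{ef} and the Fujimoto-type ceiling $q_1\leq 4$ supplied by Theorem~\ref{be0}. First I would reduce to the nonflat case: if $\psi_1$ is constant then $\psi_2=f(\psi_1)$ is also constant, so the Gauss image collapses to a single point and $M$ is already an affine space-like plane; while if $\psi_1$ is nonconstant then, because $\deg f=m\geq 2$, $\psi_2$ is nonconstant too, and the hypotheses of both the preceding theorem and Theorem~\ref{be0} are met. Cases (a) and (b) are then arithmetic: for (a) the inequalities $|E_f|\geq m-1\geq 5$ and $|E_f|\leq q_1\leq 4$ are incompatible, while for (b) the two-sided bound gives $2|E_f|\leq m+3$, directly contradicting $|E_f|>\f{m+3}{2}$.

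Case (c) is the real work. Finite total curvature makes $M$ conformally a compact Riemann surface $\bar{M}$ with finitely many punctures and forces the Gauss map to extend meromorphically to $\bar{M}$. I would re-run the Fujimoto-style argument behind the preceding theorem under this extra hypothesis: as in the passage from Fujimoto's bound $4$ to Osserman's bound $3$ in the $\R^3$ case, the defect count improves by one and the upper bound upgrades from $q_1\leq m-|E_f|+3$ to $q_1\leq m-|E_f|+2$, so $2|E_f|\leq m+2$, which together with $|E_f|\geq m-1$ already yields $m\leq 4$. For the residual degrees $m\in\{2,3,4\}$ I would exploit the precise local ramification of $f$ at each point of $E_f$ obtained in the proof of Proposition~\ref{ef}, together with a Riemann-Hurwitz computation on $\bar{M}$ and the Jorge-Meeks-type puncture count, to show that every value of $E_f$ could only be attained at ends of $M$; the bookkeeping then drives the degree of $\psi_1$ on $\bar{M}$ to zero, forcing $\psi_1$ to be constant and hence $M$ affine.

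The hardest step will be establishing the Osserman-style one-point sharpening in the Lorentzian setting: one either has to transport finite total curvature across the correspondence $M\leftrightarrow M^*$ of the Weierstrass-type representation developed earlier in the paper and invoke the $\R^4$ version, or to set up the Nevanlinna-type defect relation for the Gauss map of a stationary surface directly. The small-degree residue for (c) then reduces to careful branching bookkeeping near points of $E_f$, which relies on the full strength of Proposition~\ref{ef} rather than just the counting bound $|E_f|\geq m-1$.
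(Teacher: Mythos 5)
Your reduction to the nonflat case and your treatment of (b) are correct and match the paper: Theorem \ref{be1} gives $|E_f|\leq q_1\leq m-|E_f|+3$ for nonflat complete $M$, hence $2|E_f|\leq m+3$, and combined with $|E_f|\geq m-1$ from Proposition \ref{ef} this forces $m\leq 5$, which also disposes of (a). Note, however, that your stated route for (a) leans on ``$q_1\leq 4$ from Theorem \ref{be0}'', which is not what that theorem asserts: it only bounds $\min\{q_1,q_2\}$ (or gives $q_1=q_2=4$), and since $\psi_2=f(\psi_1)$ may have few exceptional values even when $\psi_1$ has many, you cannot extract $q_1\leq 4$ from it. Fortunately the inequality $2|E_f|\leq m+3$ that you invoke for (b) already covers (a), so this is a misattribution rather than a fatal error.

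The genuine gap is in (c). Your plan --- sharpen $q_1\leq m-|E_f|+3$ to $q_1\leq m-|E_f|+2$ under finite total curvature and then finish $m\in\{2,3,4\}$ by Riemann--Hurwitz bookkeeping --- is not carried out: you explicitly defer the Osserman-style sharpening as ``the hardest step,'' and even granting it, the arithmetic only eliminates $m=5$ (for $m=2$, $|E_f|=1$ it yields $q_1\leq 3$, no contradiction), so the whole burden falls on the unspecified bookkeeping. The paper's argument for (c) is different and essentially local. From the winding-number computation in the proof of Proposition \ref{ef}, the indices $\ep_i\in\{-1,0,1\}$ sum to $m-1\geq 1$, so some $a_i\in E_f$ has $\ep_i=+1$; normalizing $a_i=0$, $Q(0)=1$, $P'(0)\geq 1$, this forces $f'(a_i)\neq 0$. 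Finite total curvature makes $M$ conformally a compact surface minus finitely many punctures with $\psi_1$ extending meromorphically (an essential singularity is excluded by Picard's big theorem exactly as in the proof of Proposition \ref{totalcur}); if $\psi_1$ is nonconstant it must attain $a_i$ at a puncture, where $\psi_2=f(\psi_1)$ agrees with $\bar{\psi}_1$ to first order, i.e.\ that end is a \emph{bad singular end} in the sense of Ma--Wang--Wang, and such ends force the total Gauss curvature to be infinite. This contradiction makes $\psi_1$ constant and $M$ an affine space-like plane. To salvage your route you would have to actually prove the defect-relation sharpening in the Lorentzian setting and then execute the small-degree analysis; as written, (c) is unproved.
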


The above interesting results inspire us to study complete space-like stationary surfaces in $\R^{3,1}$ whose Gauss image
lies in a prescribed algebraic curve $\G\subset Q_{1,1}$ in the future. We conjecture such $M$ has to affine linear whenever $\G$ is sufficiently
complicated.

	\bigskip\bigskip

	\Section{The geometry of the Lorentz Grassmannian manifold and its hyperplanes}{The geometry of the Lorentz Grassmannian manifold and its hyperplanes}\label{L_G}
	
\subsection{The conformal and metric structures of $\mb{G}_{1,1}^2$}
	Let $\R^{3,1}$ be the $4$-dimensional Minkowski space. The Minkowski inner product for $\mb{u}=(u_1,u_2,u_3,u_4)$
	and $\mb{v}=(v_1,v_2,v_3,v_4)$ is given by
	\begin{equation}
	\lan \mb{u},\mb{v}\ran:=u_1v_1+u_2v_2+u_3v_3-u_4v_4.
	\end{equation}
	Denote by $\mb{G}_{1,1}^2$ the Lorentz Grassmannian manifold consisting of all oriented space-like 2-plane in $\R^{3,1}$.
	Given $\Pi\in \mb{G}_{1,1}^2$, $\mb{u}\w \mb{v}$ is the {\it Pl\"{u}cker coordinate} of $\Pi$ with $\{\mb{u},\mb{v}\}$ an oriented orthonormal
	basis of $\Pi$. The canonical pseudo-Riemannian metric $g$ on $\mb{G}_{1,1}^2$ is induced by the Pl\"{u}cker embedding
	from $\mb{G}_{1,1}^2$ into $\La_{3,1}^2$, which is a linear space of all $2$-vectors in $\R^{3,1}$ and the inner product
	\begin{equation}
	\lan \mb{u}_1\w \mb{v}_1,\mb{u}_2\w \mb{v}_2\ran:=\begin{vmatrix}
	\lan \mb{u}_1,\mb{v}_1\ran & \lan \mb{u}_1,\mb{v}_2\ran\\
	\lan \mb{u}_2,\mb{v}_1\ran & \lan \mb{u}_2,\mb{v}_2\ran
	\end{vmatrix}
	\end{equation}
	is induced by the Minkowski inner product on $\R^{3,1}$.
	
	On the other hand, for $\Pi=\text{span}\{\mb{u},\mb{v}\}$, the complex vector $\mb{z}=\mb{u}-i\mb{v}$ assigns a point in
	$\C^{3,1}$, which is the complexification of $\R^{3.1}$ equipped with
	a complex bilinear form
	\begin{equation}
	\lan \mb{u}_1-i\mb{v}_1,\mb{u}_2-i\mb{v}_2\ran:=\lan \mb{u}_1,\mb{u}_2\ran-\lan \mb{v}_1,\mb{v}_2\ran-i\lan \mb{u}_1,\mb{v}_2\ran-i\lan \mb{v}_1,\mb{u}_2\ran.
	\end{equation}
	Namely, for $\mb{z}=(z_1,z_2,z_3,z_4), \mb{w}=(w_1,w_2,w_3,w_4)\in \C^{3,1}$,
	\begin{equation}
	\lan \mb{z},\mb{w}\ran=z_1w_1+z_2w_2+z_3w_3-z_4w_4.
	\end{equation}
	And
	\begin{equation}
	(\mb{z},\mb{w}):=\lan \mb{z},\bar{\mb{w}}\ran=z_1\bar{w}_1+z_2\bar{w}_2+z_3\bar{w}_3-z_4\bar{w}_4
	\end{equation}
	defines a pseudo-Hermite inner product. Noting that another oriented orthonormal basis $\{\cos\th\ \mb{u}+\sin \th\ \mb{v},-\sin\th\ \mb{u}+\cos\th\ \mb{v}\}$
	of $\Pi$ corresponds to $e^{i\th}\mb{z}$,
	\begin{equation}
	i: \Pi=\text{span}\{\mb{u},\mb{v}\}\in \mb{G}_{1,1}^2\mapsto [\mb{z}]=[\mb{u}-i\mb{v}]\in \Bbb{CP}^{2,1}
	\end{equation}
	is well-defined and injective, where $\Bbb{CP}^{2,1}$ is the 3-dimensional complex projective space equipped with the Fubini-Study type metric
	\begin{equation}
	h_{FS}:=\f{2\lan \mb{z}\w d\mb{z},\bar{\mb{z}}\w d\bar{\mb{z}}\ran}{\lan \mb{z},\bar{\mb{z}}\ran^2}.
	\end{equation}
	Since $\lan \mb{u},\mb{u}\ran=\lan \mb{v},\mb{v}\ran=1$, $\lan \mb{u},\mb{v}\ran=0$, we have
	\begin{equation}\label{con1}
	\lan \mb{z},\mb{z}\ran=\lan \mb{u},\mb{u}\ran-\lan \mb{v},\mb{v}\ran-2i\lan \mb{u},\mb{v}\ran=0
	\end{equation}
	and
	\begin{equation}\label{con2}
	(\mb{z},\mb{z})=\lan \mb{z},\bar{\mb{z}}\ran=\lan \mb{u},\mb{u}\ran+\lan \mb{v},\mb{v}\ran=2>0.
	\end{equation}
	Hence $i$ gives a one-to-one correspondence between $\mb{G}_{1,1}^2$ and $Q_{1,1}^+\subset Q_{1,1}$, where
	\begin{eqnarray}
	&Q_{1,1}:=\{[\mb{z}]\in \Bbb{CP}^{2,1}:\lan \mb{z},\mb{z}\ran=0\},\\
	&Q_{1,1}^+:=\{[\mb{z}]\in Q_{1,1}:  (\mb{z},\mb{z})>0\}.
	\end{eqnarray}
	
Differentiating both sides of (\ref{con1}) and (\ref{con2}) yields
	$\lan \mb{z},d\mb{z}\ran=\lan \bar{\mb{z}},d\bar{\mb{z}}\ran=0$ and $\lan \mb{z},d\bar{\mb{z}}\ran+\lan \bar{\mb{z}},d\mb{z}\ran=0$, which enable us to obtain
	\begin{equation}\label{g1}
	\aligned
	g&=\lan d(\mb{u}\w \mb{v}),d(\mb{u}\w \mb{v})\ran=\left\lan -\f{i}{2}d(\mb{z}\w \bar{\mb{z}}),-\f{i}{2}d(\mb{z}\w \bar{\mb{z}})\right\ran\\
	&=\f{1}{4}(\lan \mb{z},d\bar{\mb{z}}\ran^2+\lan \bar{\mb{z}},d\mb{z}\ran^2+2\lan \mb{z},\bar{\mb{z}}\ran\lan d\mb{z},d\bar{\mb{z}}\ran)\\
	&=\f{1}{2}(\lan \mb{z},\bar{\mb{z}}\ran\lan d\mb{z},d\bar{\mb{z}}\ran-\lan \mb{z},d\bar{\mb{z}}\ran\lan \bar{\mb{z}},d\mb{z}\ran)\\
	&=\f{1}{2}\lan \mb{z}\w d\mb{z},\bar{\mb{z}}\w d\bar{\mb{z}}\ran=\f{2\lan \mb{z}\w d\mb{z},\bar{\mb{z}}\w d\bar{\mb{z}}\ran}{\lan \mb{z},\bar{\mb{z}}\ran^2}.
	\endaligned
	\end{equation}
	This means $i:(\mb{G}_{1,1}^2,g)\ra (\Bbb{CP}^{2,1},h_{FS})$ is an isometric embedding, i.e. $\mb{G}_{1,1}^2$ and
$Q_{1,1}^+$ are isometrically equivalent.

	Following the argument of \cite{H-O}, we study the conformal structure of $Q_{1,1}$ and $Q_{1,1}^+$.
	\begin{pro}\label{com_str}
		For $Q_{1,1}$ and $Q_{1,1}^+$, we have:
		\begin{enumerate}
			\item[(1)] $Q_{1,1}$ is biholomorphic to $\C^*\times \C^*$, with $\C^*:=\C\cup \{\infty\}$
			the extended complex plane.
			\item[(2)] $Q_{1,1}^+$ is biholomorphic to $\{(w_1,w_2)\in \C^*\times \C^*:w_2\neq \bar{w}_1\}$, where the complex conjugate
of $\infty$ is defined to be $\infty$.
		\end{enumerate}
	\end{pro}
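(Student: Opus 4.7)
The plan is to adapt Hoffman--Osserman's treatment of $Q_2$ to the Lorentzian setting by exhibiting an explicit biholomorphism. The starting observation is the factorization
\[
z_1^2 + z_2^2 + z_3^2 - z_4^2 = (z_1 + iz_2)(z_1 - iz_2) - (z_4 - z_3)(z_4 + z_3),
\]
so on $Q_{1,1}$ the two products agree. This is the equation of a Segre quadric, doubly ruled by two one-parameter families of lines, which predicts the claimed $\C^* \times \C^*$ factorisation.

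For part (1), I would introduce the explicit map $\Phi : \C^* \times \C^* \to Q_{1,1}$ given by
\[
\Phi(w_1, w_2) = [\,w_1 + w_2\,:\,i(w_2 - w_1)\,:\,w_1 w_2 - 1\,:\,w_1 w_2 + 1\,],
\]
extended by the natural limits $\Phi(\infty, w_2) = [1:-i:w_2:w_2]$, $\Phi(w_1, \infty) = [1:i:w_1:w_1]$, and $\Phi(\infty,\infty) = [0:0:1:1]$. The quadric equation reduces to $(w_1+w_2)^2 - (w_2-w_1)^2 = 4w_1 w_2 = (w_1 w_2+1)^2 - (w_1 w_2-1)^2$. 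For the two-sided inverse, on $Q_{1,1}$ the identities
\[
w_1 = \frac{z_1 + iz_2}{z_4 - z_3} = \frac{z_4 + z_3}{z_1 - iz_2}, \qquad w_2 = \frac{z_1 - iz_2}{z_4 - z_3} = \frac{z_4 + z_3}{z_1 + iz_2}
\]
agree whenever both sides are defined, and at every point of $Q_{1,1}$ at least one representative of each pair has nonzero denominator---this also covers the boundary strata where one of $w_1, w_2$ equals $\infty$. Both $\Phi$ and its inverse are manifestly holomorphic.

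For part (2), the key is a direct Hermitian computation on the representative of $\Phi(w_1, w_2)$: expanding
\[
(\mb{z}, \mb{z}) = |w_1 + w_2|^2 + |w_2 - w_1|^2 + |w_1 w_2 - 1|^2 - |w_1 w_2 + 1|^2
\]
by the $|a \pm b|^2$ identities gives $2(|w_1|^2 + |w_2|^2) - 4\,\Re(w_1 w_2)$, which factors as $2|w_2 - \bar{w}_1|^2$. Hence $(\mb{z}, \mb{z}) > 0$ precisely when $w_2 \neq \bar{w}_1$. The infinity cases are verified individually: $\Phi(\infty, w_2) = [1:-i:w_2:w_2]$ has $(\mb{z}, \mb{z}) = 2 > 0$ exactly when $w_2$ is finite (matching the convention $\bar\infty = \infty$), symmetrically for $w_2 = \infty$, while $\Phi(\infty, \infty)$ gives $(\mb{z}, \mb{z}) = 0$, as required.

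The hardest part, in my view, is locating the \emph{right} parametrisation. Several equally natural Segre-type choices (for instance taking denominator $z_4 + z_3$ rather than $z_4 - z_3$) lead instead to conditions such as $(\mb{z}, \mb{z}) = 2|1 - w_1 \bar{w}_2|^2$, which is not of the form $w_2 \neq \bar{w}_1$. Pinning down the correct coordinates is driven precisely by demanding that the Hermitian form factor as $|w_2 - \bar{w}_1|^2$; once this choice is made, every remaining verification collapses to a one-line algebraic identity.
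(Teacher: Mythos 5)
Your proposal is correct and follows essentially the same route as the paper: the Segre factorization of the quadric, an explicit rational parametrization of $Q_{1,1}$ by $\C^*\times\C^*$ whose two-sided inverse is read off from the cross-multiplication identity, and the Hermitian computation $(\mb{z},\mb{z})=2|w_2-\bar{w}_1|^2$; your coordinates differ from the paper's (which uses $w_i=(z_1\pm iz_2)/(z_3+z_4)$) only by the harmless sign change $z_3\mapsto -z_3$. One caveat on your closing remark: the choice of denominator $z_4+z_3$ with the same numerators $z_1\pm iz_2$ is precisely the paper's parametrization, and it yields the identical factorization $2|w_2-\bar{w}_1|^2$ rather than $2|1-w_1\bar{w}_2|^2$, so that particular example of a ``wrong'' choice is not actually wrong.
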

	
	\begin{proof}
		For $[\mb{z}]=[(z_1,z_2,z_3,z_4)]\in Q_{1,1}$, $z_1^2+z_2^2+z_3^2-z_4^2=0$ implies
		\begin{equation}\label{Q31}
		(z_1+iz_2)(z_1-iz_2)=(z_4-z_3)(z_3+z_4).
		\end{equation}
		Let
		\begin{equation}
		\Om:=\{[\mb{z}]\in Q_{1,1}:z_3+z_4\neq 0\},
		\end{equation}
		then for each $[\mb{z}]\in \Om$, (\ref{Q31}) implies
		\begin{equation}\label{wl20}
		\f{z_1+iz_2}{z_3+z_4}=\f{z_4-z_3}{z_1-iz_2},\qquad \f{z_1-iz_2}{z_3+z_4}=\f{z_4-z_3}{z_1+iz_2}.
		\end{equation}
		Now we define $\Psi: [\mb{z}]\in \Om\mapsto (w_1,w_2)\in \C\times \C$
		\begin{equation}\label{w12}
		w_1=\f{z_1+iz_2}{z_3+z_4},\quad w_2=\f{z_1-iz_2}{z_3+z_4}.
		\end{equation}
		Then a straightforward calculation based on (\ref{wl20}) and (\ref{w12}) shows
		\begin{equation}\label{inv_F}
		[\mb{z}]=[w_1+w_2,-i(w_1-w_2),1-w_1w_2,1+w_1w_2].
		\end{equation}
		Therefore, $\Psi$ is a biholomorphism between $\Om$ and $\C\times \C$, and the inverse of $\Psi$ is given by (\ref{inv_F}).
		
		It is easily-seen that $\Psi$ extends continuously to a bijection between $Q_{1,1}$ and $\C^*\times \C^*$, such that
		\begin{itemize}
			\item $w_1=\f{z_4-z_3}{z_1-iz_2},w_2=\infty$ whenever $z_3+z_4=z_1+iz_2=0,z_1-iz_2\neq 0$;
			\item $w_1=\infty,w_2=\f{z_4-z_3}{z_1+iz_2}$ whenever $z_3+z_4=z_1-iz_2=0,z_1+iz_2\neq 0$;
			\item $w_1=\infty,w_2=\infty$ whenever $z_3+z_4=z_1+iz_2=z_1-iz_2=0.$
		\end{itemize}
		Equivalently,
		\begin{itemize}
			\item $\mb{z}=[(1,i,-w_1,w_1)]$ whenever $w_1\in \C,w_2=\infty$;
			\item $\mb{z}=[(1,-i,-w_2,w_2)]$ whenever $w_1=\infty, w_2\in \C$;
			\item $\mb{z}=[(0,0,-1,1)]$ whenever $w_1=\infty, w_2=\infty$.
		\end{itemize}
		Around $\infty$, by using variables $\ze_1:=\f{1}{w_1}$, $\ze_2:=\f{1}{w_2}$, we conclude that $\Psi$ is a biholomorphism between
		$Q_{1,1}$ and $\C^*\times \C^*$.
		
		By (\ref{inv_F}),
		\begin{equation}
		(\mb{z},\mb{z})=|w_1+w_2|^2+|-i(w_1-w_2)|^2+|1-w_1w_2|^2-|1+w_1w_2|^2=2|w_2-\bar{w}_1|^2.
		\end{equation}
		Hence $[\mb{z}]\in Q_{1,1}^+$ if and only if $w_2\neq \bar{w}_1$. This complete the proof of (2).
		
	\end{proof}
	
    Let $\mc{P}:\Bbb{S}^2\ra \C^*$ be the stereographic projection from the point $(0,0,-1)$, then
		$$\mc{P}^{-1}(w)=\left(\f{2\text{Re }w}{1+|w|^2},\f{2\text{Im }w}{1+|w|^2},\f{1-|w|^2}{1+|w|^2}\right)\qquad \forall w\in \C.$$
		Denote by $\mc{L}:=\{\mb{u}\in \R^{3,1}:\lan \mb{u},\mb{u}\ran=0\}$ the light cone of $\R^{3,1}$, then
		$\mb{x}\mapsto (\mb{x},1)$ gives a one-to-one correspondence between $\Bbb{S}^2$ and $\mc{L}\cap \{x_4=1\}$. Given $\Pi=\text{span}\{\mb{u},\mb{v}\}\in \mb{G}_{1,1}^2$, we can check that
		\begin{equation}\label{geo_m}
		\mb{y}:=(\mc{P}^{-1}(w_1),1),\quad \mb{y}^*:=(\mc{P}^{-1}(\bar{w}_2),1)
		\end{equation}
		is the unique pair in $\mc{L}\cap \{x_4=1\}$ satisfying $\mb{y},\mb{y}^*\perp \Pi$ and $\det(\mb{u},\mb{v},\mb{y},\mb{y}^*)>0$, and $w_2\neq \bar{w}_1$
		is equivalent to saying that $\mb{y}\neq \mb{y}^*$. This is the geometric meaning of $w_1$ and $w_2$, as shown in \cite{Wang} and  \cite{M-W-W}.

	Now we express the metric of $Q_{1,1}^+$ in terms of $w_1$ and $w_2$. Differentiating both sides of (\ref{inv_F}), we get
	\begin{equation}\label{dz}
	d\mb{z}=(dw_1+dw_2,-i(dw_1-dw_2),-(w_2dw_1+w_1dw_2),w_2dw_1+w_1dw_2).
	\end{equation}
	Substituting (\ref{inv_F}) and (\ref{dz}) into (\ref{g1}), we arrive at the following formula
	\begin{equation}\label{g2}
	g=\text{Re}\left[\f{4d\bar{w}_1dw_2}{(\bar{w}_1-w_2)^2}\right].
	\end{equation}

	
	\subsection{The graphical structure of hyperplanes in $Q_{1,1}$}
	For an arbitrary $[A]\in \Bbb{CP}^{2,1}$,
	\begin{equation}\label{hp1}
   \aligned
	H_A:=\{[\mb{z}]\in Q_{1,1}:\lan A,\mb{z}\ran=0\}\\
    H_A^+:=\{[\mb{z}]\in Q_{1,1}^+:\lan A,\mb{z}\ran=0\}
    \endaligned
	\end{equation}
are hyperplanes
	in $Q_{1,1}$ and $Q_{1,1}^+$ with respect to $[A]$, respectively.
	
	\begin{pro}\label{hp2}
		For $S\subset Q_{1,1}$, we have:
		\begin{enumerate}
			\item [(1)] $S=H_A$ with $[A]\notin Q_{1,1}$ if and only if $\Psi(S)$ is the entire graph of
			a M\"{o}bius transformation $\mc{M}:\C^*\ra \C^*$.
			\item [(2)] $S=H_A$ with $[A]\in Q_{1,1}$ if and only if $\Psi(S)=\{(w_1,w_2)\in \C^*\times \C^*:w_1=c_1\text{ or }w_2=c_2\}$
with $c_1,c_2\in \C^*$.
			Especially $[A]\in Q_{1,1}^+$ if and only if $c_2\neq \bar{c}_1$.
		\end{enumerate}
	\end{pro}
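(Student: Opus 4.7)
The plan is to translate the linear condition $\lan A,\mb{z}\ran = 0$ into the product chart $(w_1,w_2)$ supplied by $\Psi$, where it becomes a bilinear equation whose (non)degeneracy is governed by $\lan A,A\ran$. Both assertions of the proposition then fall out as the two halves of a single algebraic dichotomy, and the $Q_{1,1}^+$ criterion is obtained from a direct computation of the pseudo-Hermite norm of a canonical representative.

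First I would substitute the inverse formula $\mb{z}=(w_1+w_2,-i(w_1-w_2),1-w_1w_2,1+w_1w_2)$ from (\ref{inv_F}) into $\lan A,\mb{z}\ran=0$ for $A=(a_1,a_2,a_3,a_4)$, obtaining
$$-(a_3+a_4)w_1w_2 + (a_1-ia_2)w_1 + (a_1+ia_2)w_2 + (a_3-a_4) = 0.$$
Viewed as an equation affine in $w_2$, the associated $2\times 2$ coefficient determinant simplifies to $-(a_1^2+a_2^2+a_3^2-a_4^2)=-\lan A,A\ran$. Hence when $[A]\notin Q_{1,1}$ the system is invertible and I can solve
$$w_2 \;=\; \f{(a_1-ia_2)w_1 + (a_3-a_4)}{(a_3+a_4)w_1 - (a_1+ia_2)},$$
a nondegenerate Möbius transformation; since every such transformation is a biholomorphism of the Riemann sphere, the resulting image is an entire graph in $\C^*\times\C^*$. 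The converse direction is immediate: the coefficients of any given Möbius transformation may be read off as the entries of an $A$ with $\lan A,A\ran\neq 0$, producing $H_A=S$. This establishes (1).

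For (2), when $[A]\in Q_{1,1}$ the above determinant vanishes and I would check that the bilinear polynomial factors as
$$-(a_3+a_4)(w_1-c_1)(w_2-c_2),$$
with $c_1=(a_1+ia_2)/(a_3+a_4)$ and $c_2=(a_1-ia_2)/(a_3+a_4)$; matching the constant term forces precisely $\lan A,A\ran=0$. Thus $\Psi(H_A)=\{w_1=c_1\}\cup\{w_2=c_2\}$. The remaining subcase $a_3+a_4=0$ entails $a_1^2+a_2^2=0$, so the equation collapses to either $w_1=\mathrm{const}$ or $w_2=\mathrm{const}$, corresponding to $c_2=\infty$ or $c_1=\infty$ respectively. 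The converse is again immediate by reversing the substitutions.

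Finally, to characterize $[A]\in Q_{1,1}^+$, I would pick the canonical representative $A=(c_1+c_2,-i(c_1-c_2),1-c_1c_2,1+c_1c_2)$ (consistent with the parametrization when $c_1,c_2\in\C$) and compute
$$(A,A) = |c_1+c_2|^2 + |c_1-c_2|^2 + |1-c_1c_2|^2 - |1+c_1c_2|^2 = 2|c_2-\bar{c}_1|^2,$$
so $(A,A)>0\iff c_2\neq\bar{c}_1$. The cases with $c_1$ or $c_2$ equal to $\infty$ are handled by first passing to the $\ze_i=1/w_i$ chart at the appropriate coordinate, where the computation takes the analogous form. I expect the main obstacle to be the careful bookkeeping of the various $\infty$-values throughout — in particular, verifying that the factorization in (2) always produces a genuine pair $(c_1,c_2)\in\C^*\times\C^*$ when $a_3+a_4=0$ — since $\Psi$ is only a biholomorphism once these $\infty$-extensions have been absorbed.
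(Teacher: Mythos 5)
Your proposal is correct and follows essentially the same route as the paper's proof: substitute the inverse parametrization (\ref{inv_F}) into $\lan A,\mb{z}\ran=0$, observe that the resulting bilinear equation defines the graph of a M\"obius transformation exactly when the $2\times 2$ coefficient determinant $-\lan A,A\ran$ is nonzero (and factors into two coordinate lines when it vanishes), and verify $[A]\in Q_{1,1}^+\iff c_2\neq\bar c_1$ via the computation $(A,A)=2|c_2-\bar c_1|^2$. The infinity bookkeeping you flag in case (2) is the only detail the paper also leaves implicit, and your outline handles it correctly.
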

	
	\begin{proof}
		If $[A]:=[(a_1,a_2,a_3,a_4)]\notin Q_{1,1}$, then
		\begin{equation}\begin{vmatrix}
		a_1-ia_2 & a_3-a_4\\
		a_4+a_3 & -(a_1+ia_2)
		\end{vmatrix}=-(a_1^2+a_2^2+a_3^2-a_4^2)\neq 0.
\end{equation}
		In conjunction with (\ref{inv_F}) and (\ref{hp1}), $[\mb{z}]=[(z_1,z_2,z_3,z_4)]\in H_A$ if and only if
		\begin{equation}
		w_2=\f{(a_1-ia_2)w_1+a_3-a_4}{(a_4+a_3)w_1-(a_1+ia_2)},
		\end{equation}
		i.e. $\Psi(H_A)$ is the graph of a M\"{o}bius transformation. Conversely, if $\Psi(S)$ is the graph of
		\begin{equation}
\mc{M}:w_1\in \C^*\mapsto w_2=\f{aw_1+b}{cw_1+d}\in \C^*
\end{equation}
		with $ad-bc\neq 0$, then we can show $S=H_A$ with
		\begin{equation}
A=(a-d,i(a+d),b+c,c-b),
\end{equation}
		and $[A]\notin Q_{1,1}$.
		
On the other hand, again using (\ref{inv_F}) and (\ref{hp1}), a straightforward calculation shows:
		 whenever $S=H_A$ with $[A]\in Q_{1,1}$,
		\begin{equation}\label{psiH}
		\Psi(S)=\{(w_1,w_2)\in \C^*\times \C^*:w_1=c_1\text{ or }w_2=c_2\}
		\end{equation}
		with $(c_1,c_2)=\Psi([A])$. Here $c_2\neq \bar{c}_1$
		if and only if $[A]\in Q_{1,1}^+$. Conversely, for each $S\subset Q_{1,1}$ satisfying (\ref{psiH}), it is easy to check
		that $S=H_A$ with $[A]=\Psi^{-1}(c_1,c_2)$.
		
	\end{proof}
	
This proposition along with an easily-seen fact that $\mc{M}_1(w)=\mc{M}_2(w)$ has 1 or 2 solutions in $\C^*$
for any 2 distinct M\"{o}bius transformations
enable us to derive the following conclusion:

	\begin{cor}\label{hp3}
		Let $E\subset Q_{1,1}$ be the intersection of 2 distinct hyperplanes, then one and only one of the following 3 cases occurs:
		\begin{enumerate}
			\item [(1)] $1\leq |E|\leq 2$;
			\item [(2)] $\Psi(E)=\{(c_1,w_2):w_2\in \C^*\}$ with $c_1\in \C^*$;
			\item [(3)] $\Psi(E)=\{(w_1,c_2):w_1\in \C^*\}$ with $c_2\in \C^*$.
		\end{enumerate}
	\end{cor}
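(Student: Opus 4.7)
The plan is to use Proposition \ref{hp2} to classify the two hyperplanes $H_A$ and $H_B$ whose intersection is $E$, splitting into three cases according to whether $[A],[B]$ lie in $Q_{1,1}$ or not. Throughout I will work with $\Psi(E)=\Psi(H_A)\cap \Psi(H_B)\subset \C^*\times \C^*$, which is legitimate since $\Psi$ is a bijection by Proposition \ref{com_str}.

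\textbf{Case A: $[A],[B]\notin Q_{1,1}$.} Then $\Psi(H_A)$ and $\Psi(H_B)$ are graphs of distinct M\"obius transformations $\mc{M}_A,\mc{M}_B$. An intersection point $(w_1,w_2)$ satisfies $\mc{M}_A(w_1)=w_2=\mc{M}_B(w_1)$, i.e.\ $w_1$ is a fixed point of the non-identity M\"obius transformation $\mc{M}_B^{-1}\circ \mc{M}_A$, which has exactly $1$ or $2$ fixed points in $\C^*$. Hence $1\leq |E|\leq 2$, giving case (1).

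\textbf{Case B: exactly one of $[A],[B]$ lies in $Q_{1,1}$,} say $[A]\notin Q_{1,1}$ and $[B]\in Q_{1,1}$. Write $\Psi(H_A)=\{(w_1,\mc{M}(w_1))\}$ and $\Psi(H_B)=\{w_1=c_1\}\cup\{w_2=c_2\}$ by Proposition \ref{hp2}. Imposing $w_1=c_1$ on the graph of $\mc{M}$ yields the single point $(c_1,\mc{M}(c_1))$, and $w_2=c_2$ yields $(\mc{M}^{-1}(c_2),c_2)$; these are $2$ points or coincide, so again $1\leq |E|\leq 2$, giving case (1).

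\textbf{Case C: $[A],[B]\in Q_{1,1}$.} Write $\Psi(H_A)=\{w_1=c_1\}\cup\{w_2=c_2\}$ and $\Psi(H_B)=\{w_1=d_1\}\cup\{w_2=d_2\}$. Since $H_A\neq H_B$, we have $(c_1,c_2)\neq (d_1,d_2)$. Expanding the intersection into its four sub-intersections gives two isolated points $(c_1,d_2),\ (d_1,c_2)$, together with the whole line $\{w_1=c_1\}$ if $c_1=d_1$ and/or the whole line $\{w_2=c_2\}$ if $c_2=d_2$. The subcase $c_1=d_1$, $c_2\neq d_2$ absorbs both isolated points into $\{w_1=c_1\}$, yielding case (2); the subcase $c_1\neq d_1$, $c_2=d_2$ yields case (3); and the subcase $c_1\neq d_1,\ c_2\neq d_2$ leaves exactly the two distinct points, falling into case (1).

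Finally, the three cases are pairwise disjoint: (2) and (3) are infinite, so they do not overlap (1); and (2) versus (3) cannot coincide because $\{w_1=c_1\}$ and $\{w_2=c_2'\}$ intersect in a single point rather than a whole line. No substantial obstacle is expected; the argument is a straightforward case analysis once Proposition \ref{hp2} has reduced hyperplanes to the two normal forms, together with the elementary fact (quoted just before the corollary) that two distinct M\"obius transformations agree at $1$ or $2$ points.
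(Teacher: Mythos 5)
Your proof is correct and follows essentially the same route as the paper, which derives the corollary from Proposition \ref{hp2} together with the fact that two distinct M\"obius transformations agree at $1$ or $2$ points of $\C^*$; you have simply written out in full the three-way case analysis (graph vs.\ graph, graph vs.\ union of lines, union vs.\ union) that the paper leaves implicit.
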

	
		
		
		
		

	
	\subsection{The $SO^+(3,1)$-action on $\Bbb{CP}^{2,1}$}
	
	To examine more closely the properties of hyperplanes in $Q_{1,1}$, we shall try to search
	the representative hyperplanes under the actions of the Lorentz transformation groups, as in \cite{H-O,A-V1}.
	
		Let $O(3,1)$ be the Lie group of linear transformations of $\R^{3,1}$ which keeps the Minkowski inner product invariant,
		$SO(3.1):=\{\si\in O(3,1):\det\si=1\}$, and
		$SO^+(3,1):=\{\si\in SO(3,1):(\si(\ep_4))_4>0\}$, with $\ep_4:=(0,0,0,1)$.
	Due to the definition of $H_A$ and $O(3,1)$, we have
\begin{equation}
\si(H_A)=H_{\si(A)},\quad \si(H_A^+)=H_{\si(A)}^+
\end{equation}
for each $\si\in O(3,1)$, and it follows that:
	
	\begin{lem}\label{sim}
		Any 2 hyperplanes $H_A,H_B\subset Q_{1,1}$ are equivalent under the action of $O(3,1)$ (or $SO(3,1)$, $SO^+(3,1)$), if and only if
$[A]$ and $[B]$ lie in the same orbit of $O(3,1)$ (or $SO(3,1)$, $SO^+(3,1)$) acting on $\Bbb{CP}^{2,1}$.
And the same conclusion still
		holds for 2 hyperplanes in $Q_{1,1}^+$.
	\end{lem}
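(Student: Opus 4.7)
For the ``if'' direction, the identities $\sigma(H_A)=H_{\sigma(A)}$ and $\sigma(H_A^+)=H_{\sigma(A)}^+$ displayed just before the lemma give the conclusion immediately: if $[B]=[\sigma(A)]$ for some $\sigma$ in the group under consideration, then $\sigma(H_A)=H_{\sigma(A)}=H_B$, so $\sigma$ carries $H_A$ to $H_B$, and likewise for the $+$-versions (noting that each real $\sigma\in O(3,1)$ preserves the Hermitian form $(\mb{z},\mb{z})=\lan\mb{z},\bar{\mb{z}}\ran$, hence $Q_{1,1}^+$).

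For the converse, assume $\sigma(H_A)=H_B$; then $H_{\sigma(A)}=\sigma(H_A)=H_B$, so it suffices to prove the injectivity statement: if $H_A=H_B$ in $Q_{1,1}$ then $[A]=[B]$ in $\Bbb{CP}^{2,1}$. The plan is a proof by contradiction combining Proposition \ref{hp2} and Corollary \ref{hp3}. Suppose $[A]\neq [B]$ but $H_A=H_B$; then $H_A=H_A\cap H_B$, and Corollary \ref{hp3} forces this intersection to have one of three shapes: a set of $1$ or $2$ points, a single vertical ruling $\{(c_1,w_2):w_2\in\C^*\}$, or a single horizontal ruling $\{(w_1,c_2):w_1\in\C^*\}$. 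On the other hand, Proposition \ref{hp2} tells us that $H_A$ itself is either the full graph of a Möbius transformation $w_2=\mc{M}(w_1)$ (an infinite set that is neither a vertical nor a horizontal ruling, since $\mc{M}$ is a bijection of $\C^*$) or a union $\{w_1=c_1\}\cup\{w_2=c_2\}$ of two distinct rulings (which strictly contains each individual ruling). Neither form is compatible with any of the three shapes above, a contradiction.

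The $Q_{1,1}^+$ version then follows by a density argument. The complement of $Q_{1,1}^+$ in $Q_{1,1}\cong\C^*\times\C^*$ is the real 2-dimensional anti-diagonal $\{w_2=\bar{w}_1\}$; inspection via Proposition \ref{hp2} shows that the intersection of $H_A$ with the anti-diagonal is either a finite set (for a union of two rulings) or the real-analytic locus $\{\mc{M}(w_1)=\bar{w}_1\}$ (for a Möbius graph), which is strictly smaller than $H_A$ because a holomorphic $\mc{M}$ cannot equal the antiholomorphic $\bar{w}_1$ on any open set. Thus $H_A^+$ is open and dense in $H_A$, so $H_A^+=H_B^+$ implies $H_A=\overline{H_A^+}=\overline{H_B^+}=H_B$, reducing to the previous case.

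The main obstacle is the shape comparison in the injectivity step; it ultimately rests on the two elementary but crucial observations that a Möbius transformation is a bijection of $\C^*$ (so its graph is neither horizontal nor vertical and is infinite) and that a union of two distinct rulings strictly contains each one, so that none of the admissible forms of $H_A$ from Proposition \ref{hp2} can coincide with an intersection $H_A\cap H_B$ of the type listed in Corollary \ref{hp3}. Everything else is bookkeeping.
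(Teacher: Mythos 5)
Your proof is correct. The paper itself offers no argument for this lemma beyond the equivariance identity $\si(H_A)=H_{\si(A)}$, $\si(H_A^+)=H_{\si(A)}^+$ stated just above it, from which it says the lemma ``follows''; what the paper leaves implicit is precisely the injectivity of $[A]\mapsto H_A$ (and of $[A]\mapsto H_A^+$), which is the content you supply. Your shape-comparison via Corollary \ref{hp3} works, but note a mild delicacy: that corollary speaks of ``2 distinct hyperplanes,'' and if one reads ``distinct'' as distinctness of the sets $H_A\neq H_B$ rather than of the labels $[A]\neq[B]$, your invocation of it under the hypothesis $H_A=H_B$ is vacuous. The corollary's own derivation (from the fact that two distinct M\"obius transformations agree at only $1$ or $2$ points) supports the label reading, so your argument stands, but a route that avoids the issue entirely is to observe from the proof of Proposition \ref{hp2} that the set $H_A$ determines $[A]$ directly: a M\"obius graph determines its transformation $\mc{M}_S$, which determines $[A]=[(a-d,i(a+d),b+c,c-b)]$; a union of two rulings determines $(c_1,c_2)$ and hence $[A]=\Psi^{-1}(c_1,c_2)$; and the two shapes are never equal. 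Your density argument for the $Q_{1,1}^+$ version is sound --- the coincidence locus $\{\mc{M}(w)=\bar w\}$ can be an entire circle (the totally real space-like case, where $H_A^+\cong\C\setminus\R$), but it always has empty interior in $\C^*$, so $H_A^+$ is indeed dense in the closed set $H_A$ and the reduction goes through.
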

	
	$[A]\in \Bbb{CP}^{2,1}$ is called {\it totally real}, whenever $[A]=[\mb{u}_0]$ for a real vector $\mb{u}_0$; and the corresponding
 $H_A$ ($H_A^+$) is called a {\it totally real hyperplane} in $Q_{1,1}$ ($Q_{1,1}^+$). Obviously, the set of all totally real (or non-totally real) elements (or hyperplanes) in $\Bbb{CP}^{2,1}$ (or $Q_{1,1}$) is invariant under
 the action of $O(3,1)$.
Via examining the type of $\mb{u}_0$ (for the totally real case) or the plane spanned by $\text{Re }A$ and $\text{Im }A$ (for the
non-totally real case),
we can give the classification for
	all elements (or hyperplanes) in $\Bbb{CP}^{2,1}$ (or $Q_{1,1}$) as follows:
	
	\begin{thm}\label{class1}
For any $[A]\in \Bbb{CP}^{2,1}$, the orbits of $[A]$ under the action of $O(3,1)$, $SO(3,1)$ and
$SO^+(3,1)$ are just the same one, and we denote $[A]\sim [B]$, $H_A\sim H_B$ for each $[B]$ lying in this orbit.
Moreover, each $[A]=[\text{Re }A+i\text{Im }A]:=[X+iY]$, as well as the corresponding hyperplane $H_A\subset Q_{1,1}$, can be classified to one and only one of the following categories:
		\begin{itemize}
			\item  {\bf Totally real type:} both $X$ and $Y$ are multiples of $\mb{u}_0\in \R^{3,1}$.
            \begin{enumerate}
            \item [(a)] $[A]\sim [\mb{u}_S]$ with $\mb{u}_S:=\ep_1=(1,0,0,0)$ if and only if $\mb{u}_0$ is space-like, and then
$H_A\sim \{[\mb{z}]\in Q_{1,1}:z_1=0\}$.
		    \item [(b)] $[A]\sim [\mb{u}_T]$ with $\mb{u}_T:=\ep_4=(0,0,0,1)$ if and only if $\mb{u}_0$ is time-like, and then
$H_A\sim \{[\mb{z}]\in Q_{1,1}:z_4=0\}$.
			\item [(c)] $[A]\sim [\mb{u}_N]$ with $\mb{u}_N:=\ep_3+\ep_4=(0,0,1,1)$ if and only if $\mb{u}_0$ is null, and then
$H_A\sim \{[\mb{z}]\in Q_{1,1}:z_3-z_4=0\}$.
            \end{enumerate}
          \item {\bf Non-totally real type:} $X$ and $Y$ are linearly independent, which span a 2-plane $P_A$.
			\begin{enumerate}
			\item [(a)]{\bf Hyperbolic type:} $P_A$ is space-like, i.e. $\lan X\w Y,X\w Y\ran>0$. In this case, there exists
			a unique $u\in (0,+\infty]$, such that $[A]\sim [(\tanh u,i,0,0)]$ (here $\tanh (+\infty):=1$), and $H_A\sim \{[\mb{z}]\in Q_{1,1}:\tanh u\ z_1+iz_2=0\}$.
			\item [(b)] {\bf Elliptic type:} $P_A$ is time-like, i.e. $\lan X\w Y,X\w Y\ran<0$. In this case, there exists
			a unique $\a\in (0,\f{\pi}{2})$, such that $[A]\sim [(0,0,1,i\tan \a)]$, and $H_A\sim \{[\mb{z}]\in Q_{1,1}:z_3-i\tan \a z_4=0\}$.
			\item [(c)] {\bf Parabolic type:} $P_A$ is light-like, i.e. $\lan X\w Y,X\w Y\ran=0$. In this case, $[A]\sim [(1,0,i,i)]$,
and $H_A\sim \{[\mb{z}]\in Q_{1,1}: z_1+iz_3-iz_4=0\}$.
		\end{enumerate}
		
		\end{itemize}
	\end{thm}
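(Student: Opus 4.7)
The plan is to prove the classification in three stages: (i) show that the three groups $O(3,1)$, $SO(3,1)$, $SO^+(3,1)$ share identical orbits for each $[A]$; (ii) handle the totally real case using classical transitivity of $SO^+(3,1)$ on rays of fixed causal type; and (iii) handle the non-totally real case, subdivided by the signature of the invariantly defined plane $P_A$. For (i), it suffices to check, for each listed normal form, that the $O(3,1)$-stabilizer meets every one of the four connected components of $O(3,1)$. For $[\ep_1]$ the diagonal matrices $\mathrm{diag}(1,\pm1,\pm1,\pm1)$ do the job; for the other normal forms one combines diagonal sign involutions with the relation $[A]=[-A]$ in $\cp{2,1}$ to absorb unwanted scalar factors, producing stabilizer elements in each component.

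For (ii), if $[A]=[\mb u_0]$ with $\mb u_0\in\R^{3,1}$, the causal type of $\mb u_0$ is an $O(3,1)$-invariant, and classical transitivity of $SO^+(3,1)$ on future-directed space-like, time-like, and null rays normalizes $\mb u_0$ to one of $\ep_1$, $\ep_4$, or $\ep_3+\ep_4$ respectively.

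For (iii), I would first observe that $P_A=\mathrm{span}(X,Y)$ depends only on $[A]$, since under $A\to re^{i\theta}A$ the new real and imaginary parts are a scaling and rotation of $(X,Y)$ inside $P_A$. Then $\langle X\w Y,X\w Y\rangle$ rescales by $|c|^4>0$, so its sign is an $O(3,1)$-invariant of $[A]$, yielding the hyperbolic/elliptic/parabolic trichotomy. In the \textbf{hyperbolic} subcase ($P_A$ space-like), $SO^+(3,1)$-transitivity on oriented space-like $2$-planes moves $P_A$ onto $\mathrm{span}(\ep_1,\ep_2)$; choosing $\theta$ via $\tan 2\theta=-2X\cdot Y/(|X|^2-|Y|^2)$ orthogonalizes $X$ and $Y$ in the new representative, and $SO(2)$ plus real scaling then yield $A=(s,it,0,0)$ with $s,t>0$. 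The composite of $R_{\pi/2}$ with $A\mapsto iA$ realizes the swap $(s,t)\leftrightarrow(t,s)$, so I assume $s\le t$ and rescale to reach $A=(\tanh u,i,0,0)$ with $u\in(0,+\infty]$; uniqueness of $u$ follows since the $SO^+(3,1)$-invariant $|\langle A,A\rangle|/(A,A)=1/\cosh(2u)$ is strictly monotone in $u$. The \textbf{elliptic} subcase is parallel inside $\mathrm{span}(\ep_3,\ep_4)$, but the mixed signature of this plane kills the swap symmetry, so the parameter $\alpha\in(0,\pi/2)$ is single-valued without folding.

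The \textbf{parabolic} subcase is the main obstacle. Since $P_A$ is light-like, it has a unique null radical, which $SO^+(3,1)$ moves to $\mb n=\ep_3+\ep_4$; its space-like orthogonal complement inside $P_A$ is then moved to $\ep_1$. Writing $A=\alpha\ep_1+\beta\mb n$ with $\alpha,\beta\ne 0$ (from linear independence of $X,Y$) and rescaling gives $A=(1,0,\gamma,\gamma)$ with $\gamma\in\C^*$. The delicate step is killing this residual modulus using the non-compact stabilizer of the null direction: apply a null rotation $N_a:\ep_1\mapsto\ep_1+a\mb n$ fixing $\mb n$, sending $\gamma\mapsto\gamma+a$, to zero out $\text{Re}(\gamma)$; then a boost along $\ep_3$ rescaling $\mb n\mapsto e^u\mb n$, sending $\gamma\mapsto e^u\gamma$, to normalize $|\text{Im}(\gamma)|=1$; and finally $R_\pi$ in the $(\ep_1,\ep_2)$-plane combined with multiplication of $A$ by $-1$ to flip sign if necessary, reaching $A=(1,0,i,i)$. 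Carefully tracking the interaction of these non-compact symmetries with the complex scaling freedom on $[A]$ is the crux; once correctly organized, no continuous moduli remain and the parabolic orbit is a single point.
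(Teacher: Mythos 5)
Your proposal is correct, and its backbone coincides with the paper's: rotate the phase of $A$ so that $\langle X,Y\rangle=0$, note that $P_A$ and the sign of $\langle X\w Y,X\w Y\rangle$ depend only on $[A]$, normalize the orthogonal pair $(X,Y)$ by an element of $SO^+(3,1)$ according to that sign, and separate the resulting one-parameter families by the invariant $|\langle A,A\rangle|/\langle A,\bar A\rangle$ (your $1/\cosh(2u)$ is exactly the paper's $\mathrm{sech}(2u)$). Two sub-steps take a genuinely different route. First, for the coincidence of the $O(3,1)$-, $SO(3,1)$- and $SO^+(3,1)$-orbits, the paper argues that the invariant together with the type separates all $SO^+(3,1)$-orbits, so the larger groups cannot merge any of them; you instead check that the $O(3,1)$-stabilizer of each normal form meets all four connected components. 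This yields the same conclusion without requiring the invariant to be a complete separating function, at the cost of a case-by-case verification -- which does go through for every normal form, e.g.\ for $[(1,0,i,i)]$ one uses $\mathrm{diag}(1,-1,1,1)$, $\mathrm{diag}(-1,-1,-1,-1)$ and $\mathrm{diag}(-1,1,-1,-1)$ together with $[A]=[-A]$. Second, in the parabolic case the paper normalizes the space-like part $X$ to a unit vector first and then massages the null vector $Y$ inside $X^\perp$ by a rotation and a boost, whereas you first normalize the radical of $P_A$ to $\R(\ep_3+\ep_4)$ and then exhaust the residual modulus $\gamma$ using the stabilizer of that null direction (a null rotation for $\mathrm{Re}\,\gamma$, a boost for $|\mathrm{Im}\,\gamma|$, and a $\pi$-rotation plus $[A]=[-A]$ for the sign); both are complete, and yours makes the group-theoretic mechanism more transparent. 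One small point to record explicitly: after rescaling to $A=(1,0,\gamma,\gamma)$, non-total-reality forces $\mathrm{Im}\,\gamma\neq 0$ (not merely $\gamma\neq 0$), which is what guarantees the boost step is non-vacuous.
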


	\begin{proof}
The proof for totally-real case is easy and direct, so we only consider the non-totally real case.

     Similarly as in \S 2 of \cite{H-O}, we can choose $\th\in [0,2\pi)$, such that the real part $X'$ and the imaginary part $Y'$ of $A':=e^{i\th}A$ satisfy
      $\lan X',Y'\ran=0$. So we can assume $\lan X,Y\ran=0$ without loss of generality.
		
		If $\lan X\w Y,X\w Y\ran>0$, then both $X$ and $Y$ are space-like vectors. Without loss of generality we can assume
		$\lan X,X\ran\in (0,1]$ and $\lan Y,Y\ran=1$. Hence we can choose $\mb{e}_1,\mb{e}_2\in \R^{3,1}$, so that $\lan \mb{e}_1,\mb{e}_1\ran=1$, $X=\tanh u\ \mb{e}_1$ with $u\in (0,\infty]$
		and $Y=\mb{e}_2$. This extends to an oriented orthonormal basis $\{\mb{e}_1,\mb{e}_2,\mb{e}_3,\mb{e}_4\}$ of $\R^{3,1}$, such that the fourth
		component of $\mb{e}_4$ is positive. Let $\si\in SO^+(3,1)$, such that $\si(\mb{e}_i)=\ep_i$ for $i=1,2,3,4$, then $[\si(A)]=[(\tanh u,i,0,0)]$.
		
		If $\lan X\w Y,X\w Y\ran<0$, without loss of generality we can assume $\lan X,X\ran=1$, $\lan Y,Y\ran<0$ and $Y_4>0$.
		Therefore, we can take $\mb{e}_3,\mb{e}_4\in \R^{3,1}$, such that $X=\mb{e}_3$, $\lan\mb{e}_4,\mb{e}_4\ran=-1$ and $Y=\tan\a\mb{e}_4$ with $\a\in (0,\f{\pi}{2})$.
		This extends to an oriented orthonormal basis $\{\mb{e}_1,\mb{e}_2,\mb{e}_3,\mb{e}_4\}$ of $\R^{3,1}$, which yields $\si\in SO^+(3,1)$
		satisfying $[\si(A)]=[(0,0,1,i\tan\a)]$.
		
		If $\lan X\w Y,X\w Y\ran=0$, without loss of generality we can assume $\lan X,X\ran=1$ and $Y$ is a null vector with $Y_4>0$. Select
		an oriented orthonormal basis $\{\mb{e}_1,\mb{e}_2,\mb{e}_3,\mb{e}_4\}$ of $\R^{3,1}$, such that $X=\mb{e}_1$, $(\mb{e}_2)_4=(\mb{e}_3)_4=0$ and
		$(\mb{e}_4)_4>0$. It follows from $X\perp Y$ and $Y_4>0$ that $Y=y_2 \mb{e}_2+y_3\mb{e}_3+y_4\mb{e}_4$ with $y_4>0$. Making a suitable rotation to $\{\mb{e}_2,\mb{e}_3\}$ we get $\{\mb{e}'_2,\mb{e}'_3\}$,
		so that $Y=y_4 \mb{e}'_3+y_4\mb{e}_4$. Let $u:=\log y_4$,
		$$\mb{e}''_3:=\cosh u\ \mb{e}'_3+\sinh u\ \mb{e}_4,\quad \mb{e}'_4:=\sinh u\ \mb{e}'_3+\cosh u\ \mb{e}_4,$$
		then $\{\mb{e}_1,\mb{e}'_2,\mb{e}''_3,\mb{e}'_4\}$ is an oriented orthonormal basis of $\R^{3,1}$ and $A=\mb{e}_1+i(\mb{e}''_3+\mb{e}'_4)$.
		Similarly to above, we concludes that $[A]$ is equivalent to $[(1,0,i,i)]$ under $SO^+(3,1)$-action.
		
		Obviously
		\begin{equation}
		I([A]):=\f{|\lan A,A\ran|}{\lan A,\bar{A}\ran}
		\end{equation}
		is $O(3,1)$-invariant. A direct computation shows:
		\begin{itemize}
			\item If $[A]$ is hyperbolic, $I([A])=\f{1-\tanh^2 u}{1+\tanh^2 u}=\text{sech}(2u)\in [0,1)$. (Here $\text{sech}(+\infty)=0$.)
			\item If $[A]$ is elliptic, $I([A])=\f{1+\tan^2\a}{1-\tan^2\a}=\sec(2\a)\in \{\la\in \R:|\la|>1\}\cup \{\infty\}$.
			\item If $[A]$ is parabolic, $I([A])=1$.
		\end{itemize}
Thus, any 2 elements $[A],[B]$ lying in distinct orbits of $SO^+(3,1)$ cannot be equivalent under the action of $O(3,1)$. Furthermore,
 $SO^+(3,1)\subset SO(3,1)\subset O(3,1)$ implies the orbits of the 3 groups going through $[A]$ should coincide.
		
	\end{proof}
	
	
		

	\subsection{The conjugate similarity on $SL(2,\C)$}\label{conj}
	
	A natural problem comes from Proposition \ref{hp2}: {\it Given 2 hyperplanes $H_1,H_2\subset Q_{1,1}$, which conditions on $\Psi(H_1),\Psi(H_2)\subset \C^*\times
	\C^*$ ensures $H_1$ and $H_2$ are equivalent under the action of $SO^+(3,1)$?} We shall study the classification of hyperplanes in $Q_{1,1}$ from this viewpoint.
	
	The projection $\pi:\mb{u}\in \R^{3,1}\backslash \{0\}\mapsto [\mb{u}]\in \Bbb{PR}^{3,1}$ restricted on the light cone $\mc{L}$
	defines a $\R^\times$-bundle from $\mc{L}$ onto {\it the M\"{o}bius space} M\"{o}b, where $\R^\times$ is the multiplicative group
	of nonzero real numbers. The conformal structure on M\"{o}b is induced by the Minkowski inner product on $\R^{3,1}$, and
	$f:\mb{x}\in \Bbb{S}^2\mapsto [(\mb{x},1)]\in $M\"{o}b is a conformal diffeomorphism (see e.g \S 1 of \cite{H}). Noting the action $\si([\mb{u}])=[\si(\mb{u})]$
	for $\si\in SO^+(3,1)$
	sends M\"{o}b onto itself, $\chi:\si\mapsto f^{-1}\circ \si\circ f$ is an injective homomorphism from $SO^+(3,1)$ into $\text{Conf}(\Bbb{S}^2)$,
	the Lie group of all conformal diffeomorphisms of $\Bbb{S}^2$ onto itself. $\dim SO^+(3,1)=\dim \text{Conf}(\Bbb{S}^2)=6$ and the connectivity of
	$SO^+(3,1)$ implies $\chi$ gives an isomorphism between $SO^+(3,1)$ and the M\"{o}bius transformation group on $\Bbb{S}^2$. Due to the geometric meaning
	of $\Psi:Q_{1,1}\ra \C^*\times \C^*$ (see (\ref{geo_m}) and its context), the following diagram commutes:
	\begin{equation}\label{mob1}\CD
	Q_{1,1} @>\si>> Q_{1,1}  \\
	@V\Psi VV     @VV\Psi V \\
	\C^*\times \C^*  @>\psi>> \C^*\times \C^*
	\endCD
	\end{equation}
	Here $\si\in SO^+(3,1)$ and
	\begin{equation}\label{action1}
	\psi:(w_1,w_2)\mapsto (\f{aw_1+b}{cw_1+d},\f{\bar{a}w_2+\bar{b}}{\bar{c}w_2+\bar{d}})
	\end{equation}
is induced by the M\"{o}bius transformation $\chi(\si):w\mapsto \f{aw+b}{cw+d}$,
 whose coefficient matrix is
$T:=\begin{pmatrix} a & b\\ c & d\end{pmatrix}$.
Without loss of generality, we can assume $T\in SL(2,\C)$, and denote by $\mc{M}_T$ the corresponding M\"{o}bius transformation,
then $\mc{M}_T=\mc{M}_{T'}$ if and only if $T'=\pm T$.
	
	Given $H_A\subset Q_{1,1}$ with $[A]\in Q_{1,1}$ and $H_B=\si(H_A)$ with $\si\in SO^+(3,1)$, then Proposition \ref{hp2} tells us
	\begin{equation}
\Psi(H_A)=\{(w_1,w_2):w_1=c_1\text{ or }w_2=c_2\}
\end{equation}
	and $c_2\neq \bar{c}_1$ if and only if $[A]\in Q_{1,1}^+$; By (\ref{mob1}) and (\ref{action1}),
	there exists a M\"{o}bius transformation $\mc{M}_T$, such that
	\begin{equation}
\Psi(H_B)=\{(w_1,w_2):w_1=\mc{M}_T(c_1)\text{ or }w_2=\mc{M}_{\bar{T}}(c_2)\}.
\end{equation}
	Here $\mc{M}_{\bar{T}}(c_2)=\overline{\mc{M}_T(c_1)}$ if and only if $c_2=\bar{c}_1$.
Since the $SL(2,\C)$-action $T\cdot (c_1,c_2):=(\mc{M}_T(c_1),\mc{M}_{\bar{T}}(c_2))$ is transitive on
both $\{(c_1,c_2):c_2=\bar{c}_1\}$ and $\{(c_1,c_2):c_2\neq \bar{c}_1\}$, $\{H_A:[A]\in Q_{1,1}^+\}$ and $\{H_B:[B]\in Q_{1,1}\backslash Q_{1,1}^+\}$ are both orbits of $SO^+(3,1)$ acting on the set of hyperplanes in $Q_{1,1}$.	

	For $H_1:=H_A$ and $H_2:=H_B$ with $[A],[B]\notin Q_{1,1}$, Proposition \ref{hp2} ensures the existence of
	$S_1,S_2\in SL(2,\C)$, such that
	\begin{equation}
	\Psi(H_i)=\{(w,\mc{M}_{S_i}(w)):w\in \C^*\}\qquad \forall i\in \{1,2\}.
	\end{equation}
	If $H_1\sim H_2$, there exists $T\in SL(2,\C)$, such that
	\begin{equation}\label{action}\aligned
	\Psi(H_2)&=\{(w,\mc{M}_{S_2}(w)):w\in \C^*\}\\
	&=\{(\mc{M}_T(w),\mc{M}_{\bar{T}}\circ \mc{M}_{S_1}(w)):w\in \C^*\}\\
	&=\{(w',\mc{M}_{\bar{T}}\circ \mc{M}_{S_1}\circ \mc{M}_{T^{-1}}(w')):w'\in \C^*\}\\
	&=\{(w,\mc{M}_{\bar{T}S_1T^{-1}}(w)):w\in \C^*\}.
	\endaligned
    \end{equation}
Hence $S_2=\pm \bar{T}S_1T^{-1}$. Conversely, if $S_2=\pm \bar{T}S_1T^{-1}$ with $T\in SL(2,\C)$, it is easy to get
	$H_1\sim H_2$. This leads to the following conclusion:
	
	\begin{pro}\label{conj3}
		Let $H_1,H_2$ be hyperplanes in $Q_{1,1}$, such that $\Psi(H_1)$ and $\Psi(H_2)$ are respectively the graph of the M\"{o}bius transformation $\mc{M}_{S_1}$
		and $\mc{M}_{S_2}$, with $S_1,S_2\in SL(2,\C)$. Then $H_1\sim H_2$ if and only if there exists $T\in SL(2,\C)$, such that
		$S_2=\pm \bar{T}S_1T^{-1}$. In this case, $S_1$ and $S_2$ are said to be {\bf conjugate similar} to each other, and we denote
		$S_1\stackrel{conj}{\sim}S_2$.
	\end{pro}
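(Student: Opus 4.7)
The plan is to leverage the commutative diagram (\ref{mob1}) together with the action formula (\ref{action1}), which together translate the $SO^+(3,1)$-action on hyperplanes in $Q_{1,1}$ into a concrete action on $\C^*\times \C^*$. Since the isomorphism $\chi:SO^+(3,1)\to \text{M\"ob}(\Bbb{S}^2)$ established in the preceding discussion guarantees that every $\sigma\in SO^+(3,1)$ corresponds to some $\mc{M}_T$ with $T\in SL(2,\C)$, unique up to sign, the equivalence $H_1\sim H_2$ reduces to the question of whether $\Psi(H_2)$ is the image of $\Psi(H_1)$ under an action of the form $(w_1,w_2)\mapsto (\mc{M}_T(w_1),\mc{M}_{\bar{T}}(w_2))$.

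For the forward direction, I would parameterize $\Psi(H_1)=\{(w,\mc{M}_{S_1}(w)):w\in \C^*\}$ and apply the induced $\psi$, obtaining $\psi(\Psi(H_1))=\{(\mc{M}_T(w),\mc{M}_{\bar{T}}\circ \mc{M}_{S_1}(w)):w\in\C^*\}$. Substituting $w'=\mc{M}_T(w)$ and invoking the homomorphism property $\mc{M}_A\circ \mc{M}_B=\mc{M}_{AB}$, this rewrites as $\{(w',\mc{M}_{\bar{T}S_1T^{-1}}(w')):w'\in \C^*\}$, which by hypothesis must coincide with $\Psi(H_2)=\{(w',\mc{M}_{S_2}(w'))\}$. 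Matching the two graphs forces $\mc{M}_{S_2}=\mc{M}_{\bar{T}S_1T^{-1}}$, and hence $S_2=\pm\bar{T}S_1T^{-1}$ by the standard fact, recalled explicitly in the excerpt, that a M\"obius transformation determines its $SL(2,\C)$ representative up to sign.

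For the converse, the argument reverses: given $T\in SL(2,\C)$ with $S_2=\pm\bar{T}S_1T^{-1}$, the same calculation shows the associated $\psi$ sends $\Psi(H_1)$ onto $\Psi(H_2)$, and then the inverse isomorphism $\chi^{-1}$ together with diagram (\ref{mob1}) produces a $\sigma\in SO^+(3,1)$ mapping $H_1$ to $H_2$. The only place requiring genuine care is tracking the unavoidable sign ambiguity: since $T$ and $-T$ induce the same M\"obius transformation, the relation cannot be sharpened beyond $\pm$, and this is precisely why the $\pm$ must appear in the conclusion. Once this is accounted for, the proof reduces to the routine algebraic verification sketched above.
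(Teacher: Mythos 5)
Your proposal is correct and follows essentially the same route as the paper: the paper's proof is precisely the computation in (\ref{action}), transporting the graph of $\mc{M}_{S_1}$ through the action $(w_1,w_2)\mapsto(\mc{M}_T(w_1),\mc{M}_{\bar{T}}(w_2))$ from (\ref{mob1})--(\ref{action1}), reparameterizing by $w'=\mc{M}_T(w)$ to identify the image with the graph of $\mc{M}_{\bar{T}S_1T^{-1}}$, and concluding $S_2=\pm\bar{T}S_1T^{-1}$ from the sign ambiguity of $SL(2,\C)$ representatives. Your explicit attention to the $\pm$ and to the reversibility of the argument matches the paper's (terser) treatment.
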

	
	It is easily-seen that the conjugate similarity is an equivalence relation on $SL(2,\C)$. We shall explore the necessary and sufficient conditions
	ensuring 2 matrices to be conjugate similar, and examine the conjugate similar canonical forms.
	
	For $S\in SL(2,\C)$, define
	\begin{equation}\label{RS1}
	\mc{R}_S(v)=\overline{Sv},
	\end{equation}
	then $\mc{R}_S$ is obviously a $\R$-linear mapping. Let $A$ and $B$ be the real and imaginary part of $S$, respectively, then
	$$\mc{R}_S(x+iy)=\overline{(A+iB)(x+iy)}=Ax-By-i(Bx+Ay).$$
	This means
	\begin{equation}
	R_S:=\begin{pmatrix}
	A & -B\\
	-B & -A
	\end{pmatrix}
	\end{equation}
	is the representative matrix of $\mc{R}_S$.
	
A nonnegative real number $r$ is called {\it conjugate eigenvalue} of $S$, whenever there exists $v\neq 0$,
such that
\begin{equation}
Sv=r\bar{v},
\end{equation}
and	$v$ is called a {\it conjugate eigenvector} of $S$ associated to $r$. All such vectors
form a $\R$-linear subspace of $\C^2$ and its dimension is said to be the {\it multiplicity} of $r$.
Due to the definition of conjugate similarity and $R_S$, we have:
\begin{itemize}
\item Conjugate similar matrices $S_1,S_2$ share the same conjugate eigenvectors;
\item $r\in \R^+$ is a conjugate eigenvalue of $S$ if and only if $r$
	is an eigenvalue of $R_S$;
\item $v=x+iy$ is a conjugate eigenvector of $S$
	if and only if $\begin{pmatrix}
	x \\
	y
	\end{pmatrix}$ is an eigenvector of $R_S$.
\end{itemize}
	
	By (\ref{RS1}),
	$$\mc{R}_S(iv)=\overline{S(iv)}=-i\overline{Sv}=-i\mc{R}_S(v)$$
	and hence
	\begin{equation}\label{RS2}
	R_S J=-JR_S\qquad \text{with }J=\begin{pmatrix}
	0 & -I_2\\
	I_2 & 0
	\end{pmatrix}.
	\end{equation}
	This enables us to get:
	
	\begin{lem}\label{eig1}
		For a complex column vector $z$ and a complex number $\la$, $z$ is a (generalized) eigenvector of $R_S$ associated to the eigenvalue $\la$ if and only if
		$Jz$ is a (generalized) eigenvector of $R_S$ associated to the eigenvalue $-\la$.
	\end{lem}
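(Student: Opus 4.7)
The plan is to read off both directions of the equivalence directly from the anticommutation relation $R_S J = -J R_S$ established in (\ref{RS2}), lifted from the linear level to the generalized-eigenvector level by induction.

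First I would handle the ordinary eigenvector case as a warm-up: if $R_S z = \lambda z$, then
\begin{equation*}
R_S(Jz) = -JR_S z = -\lambda\, Jz,
\end{equation*}
so $Jz$ sits in the $(-\lambda)$-eigenspace. This is the whole content for $k=1$ and it already shows the underlying mechanism.

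For the generalized case, the key algebraic identity is
\begin{equation*}
(R_S + \lambda I)\,J = -J R_S + \lambda J = -J(R_S - \lambda I),
\end{equation*}
which iterates by a straightforward induction on $k$ to
\begin{equation*}
(R_S + \lambda I)^k\, J = (-1)^k\, J\,(R_S - \lambda I)^k.
\end{equation*}
Applying both sides to $z$, if $(R_S-\lambda I)^k z=0$ then $(R_S+\lambda I)^k (Jz)=0$, so $Jz$ is a generalized eigenvector of $R_S$ for $-\lambda$. This settles the ``only if'' direction.

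The converse is essentially free because $J$ is invertible; indeed $J^2=-I_4$. So if $Jz$ is a generalized eigenvector for $-\lambda$, applying the direction just proved to $Jz$ and $-\lambda$ shows that $J(Jz)=-z$, hence $z$ itself, is a generalized eigenvector for $\lambda$. There is no real obstacle here; the only thing to be careful about is making the inductive step clean by factoring $(R_S-\lambda I)^{k}=(R_S-\lambda I)(R_S-\lambda I)^{k-1}$ and inserting $J$ using the base identity, and then explicitly invoking $J^2=-I_4$ so that the converse is not merely asserted but exhibited.
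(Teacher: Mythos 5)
Your proof is correct and follows essentially the same route as the paper: both rest on the anticommutation relation $R_SJ=-JR_S$, handle the eigenvector case by the one-line computation $R_S(Jz)=-\lambda Jz$, and treat the generalized case by pushing $J$ through the $k$-th power of $(R_S-\lambda I_4)$ to convert it into the $k$-th power of $(R_S+\lambda I_4)$. The only difference is cosmetic: you make the converse explicit via $J^2=-I_4$ where the paper simply writes ``and verse visa.''
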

	\begin{proof}
		If $z$ is an eigenvector of $R_S$ associated to $\la$, then $R_S z=\la z$ and (\ref{RS2}) gives
		$$R_S(Jz)=-J(R_S(z))=-\la (Jz),$$
		which means $Jz$ is an eigenvalue of $R_S$ associated to $-\la$. And verse visa.
		
		If $z$ is a generalized eigenvector of $R_S$ associated to $\la$, then there exists $k\in \Bbb{Z}^+$, such that
		$(R_S-\la I_4)^k(z)=0$, hence
		$$\aligned
		0&=J(R_S-\la I_4)^k z=(-R_S-\la I_4)J(R_S-\la I_4)^{k-1}z\\
		&=\cdots=(-R_S-\la I_4)^k (Jz).
		\endaligned$$
		In other words, $Jz$ is a generalized eigenvector of $R_S$ associated to $-\la$. And verse visa.
	\end{proof}
	
	Via carefully examining the eigenvalues and the Jordan canonical form of $R_S$, we can get all equivalence classes of conjugate similarity as below.
	
	\begin{thm}\label{class2}
		For each $S_1,S_2\in SL(2,\C)$, $S_2$ is conjugate similar to $S_1$ if and only if $R_{S_2}$ is similar to $R_{S_1}$.
		An arbitrary $S\in SL(2,\C)$ can be classified to one and only one of the following 3 types:
		\begin{enumerate}
			\item [(1)] $S\stackrel{conj}{\sim} \begin{pmatrix}
			e^u & 0\\
			0 & e^{-u}
			\end{pmatrix}$ with $u\geq 0$.
			\item [(2)] $S\stackrel{conj}{\sim} \begin{pmatrix}
			\cos\a & \sin\a\\
			-\sin\a & \cos\a
			\end{pmatrix}\stackrel{conj}{\sim} \begin{pmatrix}
			0 & ie^{-i\a}\\
			ie^{i\a} & 0
			\end{pmatrix}$ with $\a\in (0,\f{\pi}{2}]$.
			\item [(3)]
			$S\stackrel{conj}{\sim} \begin{pmatrix}
			1 & 1\\
			0 & 1
			\end{pmatrix}$.
		\end{enumerate}
	\end{thm}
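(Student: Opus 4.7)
The plan splits into three stages: \textbf{(i)} the forward direction ``$S_1 \stackrel{conj}{\sim} S_2 \Rightarrow R_{S_1} \sim R_{S_2}$''; \textbf{(ii)} existence of a canonical form for every $S \in SL(2,\C)$; and \textbf{(iii)} the distinctness of the three families, which together with (i) yields the converse direction.

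For (i), a direct calculation gives, for $T \in SL(2,\C)$,
\[
\mc R_{\bar T S T^{-1}}(v) = \overline{\bar T S T^{-1} v} = T \overline{S (T^{-1}v)} = (L_T \circ \mc R_S \circ L_T^{-1})(v),
\]
so $R_{\bar T S T^{-1}} = \td T R_S \td T^{-1}$, where $\td T \in GL(4,\R)$ is the $4\times 4$ real representation of the complex-linear map $L_T$. The sign ambiguity $S_2 = -\bar T S_1 T^{-1}$ is absorbed by the observation $R_{-S} = -R_S$ together with Lemma \ref{eig1}, which yields $J R_S J^{-1} = -R_S$; hence $R_S$ and $-R_S$ are similar.

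For (ii), I analyze $S$ via its conjugate eigenvalues, i.e., the nonnegative real eigenvalues of $R_S$. Using Lemma \ref{eig1} (the spectrum of $R_S$ is symmetric about $0$), the reality of $R_S$ (symmetry about $\R$), and $\det R_S = 1$ (obtained from $R_S^2 = L_{\bar S S}$ and $|\det S|=1$), the spectrum of $R_S$ must fall into one of three patterns. \emph{Hyperbolic}---if $R_S$ has two conjugate eigenvalues $r_1, r_2 > 0$ with associated $\C$-linearly independent conjugate eigenvectors $v_1, v_2$ ($Sv_j = r_j \bar v_j$), then bilinearity of $\det$ combined with $\det S = 1$ gives $\det(v_1|v_2) = r_1 r_2\,\overline{\det(v_1|v_2)}$, which forces both $r_1 r_2 = 1$ and $\det(v_1|v_2) \in \R^*$; real rescaling of the $v_j$ (which preserves the conjugate-eigenvector relation) produces $T := (v_1|v_2) \in SL(2,\C)$ with $\bar T^{-1} S T = \mathrm{diag}(e^u, e^{-u})$, $u = \log r_1 \geq 0$. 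The $\C$-linear independence of $v_1, v_2$ is automatic if $r_1 \neq r_2$, and in the repeated case $r_1 = r_2 = 1$ follows from $E_\R \cap J E_\R = \{0\}$, an immediate consequence of Lemma \ref{eig1}. \emph{Parabolic}---if $R_S$ has only one conjugate eigenvalue $r = 1$ with geometric multiplicity $1$ over $\R$, the Jordan structure of $R_S$ supplies a companion $w$ solving $Sw - \bar w = \bar v$; normalizing so that $\det(v|w) = 1$ produces $T = (v|w) \in SL(2,\C)$ with $\bar T^{-1} S T = \begin{pmatrix} 1 & 1 \\ 0 & 1 \end{pmatrix}$. \emph{Elliptic}---if $R_S$ has no real eigenvalues, the three symmetries force its spectrum to be $\pm e^{\pm i \a}$ on the unit circle for some $\a \in (0,\pi/2]$; here one works with $\bar S S$, whose eigenvalues are $e^{\pm 2i\a}$ (and which is necessarily diagonalizable, since a direct computation rules out $\bar S S \sim \begin{pmatrix} -1 & 1 \\ 0 & -1 \end{pmatrix}$: the constraint $\det S = 1$ forces the $(1,2)$-entry of $\bar S S$ to vanish under the implied parametrization). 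Its $\C$-linear eigenvectors, suitably combined with their $S$-images, assemble $T \in SL(2,\C)$ with $\bar T^{-1} S T = R_\a$.

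For (iii), different canonical forms yield distinct eigenvalue/Jordan patterns of $R_S$ (real versus non-real on the unit circle versus $\pm 1$ with Jordan blocks), with $u$ and $\a$ acting as injective parameters. Combined with (i), if $R_{S_1} \sim R_{S_2}$, applying (ii) to both $S_i$ yields canonical forms $C_1, C_2$ with $R_{C_1} \sim R_{C_2}$; distinctness then forces $C_1 = C_2$, whence $S_1 \stackrel{conj}{\sim} C_1 = C_2 \stackrel{conj}{\sim} S_2$. The main obstacle will be the elliptic case of (ii): the absence of conjugate eigenvectors means $T$ cannot be assembled from real eigendirections of $R_S$, and instead requires a careful complex-linear algebra argument tracking how the eigenvectors of $\bar S S$ interact with $S$ and the determinant constraint $\det T = 1$ to produce the rotation $R_\a$ with the correct real parameter.
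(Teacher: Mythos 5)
Your stages (i) and (iii), together with the hyperbolic and parabolic parts of (ii), track the paper's proof closely: the same identity $\mc{R}_{\bar T S T^{-1}}=L_T\circ \mc{R}_S\circ L_T^{-1}$ gives the forward direction, the sign ambiguity is absorbed via $R_{-S}=-R_S$ and Lemma \ref{eig1}, and in the real-spectrum cases $T$ is assembled from real (generalized) eigenvectors of $R_S$ exactly as in the paper's Cases II and III, with the determinant identity forcing $r_1r_2=1$ (resp.\ $r=1$) and $\det T\in\R$. One small caveat: in your parabolic case, a simultaneous real rescaling of $v$ and $w$ multiplies $\det(v\,|\,w)$ by a \emph{positive} number, so when that determinant is negative you cannot literally ``normalize $\det(v|w)=1$''; you must invoke the $\pm$ sign in the definition of conjugate similarity and replace $T$ by a real multiple of $iT$, as the paper does explicitly in its Case I.

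The genuine gap is the elliptic case, which you yourself flag as ``the main obstacle'' and leave as a programme (``eigenvectors of $\bar S S$, suitably combined with their $S$-images'') rather than an argument; the auxiliary claim that $\bar S S$ is diagonalizable rests on an unspecified ``implied parametrization'' and cannot be checked as written. The detour through $\bar S S$ is also unnecessary: the paper settles this case by the same mechanism as the other two. If $\la=\la_1+i\la_2$ with $\la_1\ge 0$, $\la_2>0$ is a non-real eigenvalue of the real matrix $R_S$ and $z$ is an associated eigenvector with real and imaginary parts $(x_1,y_1)^t$ and $(x_2,y_2)^t$, then separating $R_Sz=\la z$ into real and imaginary parts and setting $v_k:=x_k+iy_k$ yields
\begin{equation*}
S\,(v_1\ v_2)=(\bar v_1\ \bar v_2)\begin{pmatrix}\la_1 & \la_2\\ -\la_2 & \la_1\end{pmatrix},
\end{equation*}
a short computation shows that $\C$-linear dependence of $v_1,v_2$ would force $\la_2=0$, so $T_0:=(v_1\ v_2)$ is invertible, and taking determinants gives $\det T_0\in\R$ and $\la_1^2+\la_2^2=1$; hence $\la=e^{i\a}$ with $\a\in(0,\f{\pi}{2}]$ and, after the same $\pm$/rescaling adjustment, $S\stackrel{conj}{\sim}\begin{pmatrix}\cos\a & \sin\a\\ -\sin\a & \cos\a\end{pmatrix}$. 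No square root of $\bar S S$ and no separate diagonalizability discussion are needed. To complete your proof, either carry out the $\bar S S$ construction in full or replace it by this direct eigenvector splitting.
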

	
	\begin{proof}
		If $S_1\stackrel{conj}{\sim} S_2$, then there exists $T\in SL(2,\C)$, such that $S_2=\pm \bar{T}S_1T^{-1}$.
		Lemma \ref{eig1} tells us $R_{S_2}$ is similar to $R_{-S_2}=-R_{S_2}$, hence we can assume $S_2=\bar{T}S_1T^{-1}$
		without loss of generality. Since
		$$\mc{R}_{S_2}(v)=\overline{S_2v}=\overline{\bar{T}S_1T^{-1}v}=T\mc{R}_{S_1}(T^{-1}(v)),$$
		we have
		$$R_{S_2}=\begin{pmatrix}
		X & -Y\\
		Y & X
		\end{pmatrix}R_{S_1}\begin{pmatrix}
		X & -Y\\
		Y & X
		\end{pmatrix}^{-1}$$
		with $X:=\text{Re }T$ and $Y:=\text{Im }T$.
		This implies $R_{S_1}\sim R_{S_2}$.
		
		For $S\in SL(2,\C)$, we shall consider the eigenvalues of $R_S$ case by case:
		\begin{itemize}
			\item {\bf Case I:} $R_S$ has a non-real eigenvalue $\la$.

Observing that $\bar{\la}$, $-\la$, $-\bar{\la}$ are also
			eigenvalues of $R_S$, we can assume $\la=\la_1+i\la_2$ with $\la_1\geq 0$ and $\la_2>0$, without loss of generality.
			Let $z:=\begin{pmatrix}
			x_1 \\
			y_1
			\end{pmatrix}+\sqrt{-1}\begin{pmatrix}
			x_2 \\
			y_2
			\end{pmatrix}$ be the eigenvector associated to $\la$, then $R_S z=\la z$ gives
			\begin{equation}\left\{\begin{array}{c}
			\mc{R}_S(v_1)=\la_1 v_1-\la_2 v_2\\
			\mc{R}_S(v_2)=\la_2 v_1+\la_1 v_2
			\end{array}\right.\qquad \text{with }v_1:=x_1+iy_1,v_2:=x_2+iy_2
			\end{equation}
			and moreover
			\begin{equation}
			S(v_1\ v_2)=(\bar{v}_1\ \bar{v}_2)\begin{pmatrix}
			\la_1 & \la_2\\
			-\la_2 & \la_1
			\end{pmatrix}.
			\end{equation}
			The assumption that $v_1$ and $v_2$ are $\C$-linear dependent forces $\la_2=0$, causing a contradiction. Therefore
			$T_0:=(v_1\ v_2)$ is non-singular and
			\begin{equation}\label{conj1}
			S=\bar{T}_0\begin{pmatrix}
			\la_1 & \la_2\\
			-\la_2 & \la_1
			\end{pmatrix}T_0^{-1}.
			\end{equation}
			Taking the determinants of both sides of the above equality yields $\overline{|T_0|}(\la_1^2+\la_2^2)|T_0|^{-1}=1$, hence
			$|T_0|\in \R$ and $\la_1^2+\la_2^2=1$. If $|T_0|>0$, taking $T:=\mu T_0$ with a suitable real number $\mu$ makes sure $|T|=1$ and
			$S=\bar{T}\begin{pmatrix}
			\la_1 & \la_2\\
			-\la_2 & \la_1
			\end{pmatrix}T^{-1}$. If $|T_0|<0$, taking $T:=\nu iT_0$ with a suitable real number $\nu$ makes sure $|T|=1$ and
			$S=-\bar{T}\begin{pmatrix}
			\la_1 & \la_2\\
			-\la_2 & \la_1
			\end{pmatrix}T^{-1}$.  Hence $S\stackrel{conj}{\sim} \begin{pmatrix}
			\cos\a & \sin\a\\
			-\sin\a & \cos\a
			\end{pmatrix}$ with $\a\in (0,\f{\pi}{2}]$, and $R_S\sim \text{diag}(e^{i\a},e^{-i\a},-e^{i\a},-e^{-i\a})$.
			Therefore, given $S_1,S_2$ belonging to this case, $S_1\stackrel{conj}{\sim} S_2$ if and only if $R_{S_1}\sim R_{S_2}$.
			In particular, if $S=\begin{pmatrix}
			0 & ie^{-i\a}\\
			ie^{i\a} & 0
			\end{pmatrix}$, the easily-seen fact that $e^{i\a}$ is an eigenvalue of $R_S$ gives
			$\begin{pmatrix}
			0 & ie^{-i\a}\\
			ie^{i\a} & 0
			\end{pmatrix}\stackrel{conj}{\sim} \begin{pmatrix}
			\cos\a & \sin\a\\
			-\sin\a & \cos\a
			\end{pmatrix}$.
			
			\item {\bf Case II:} $R_S$ is a diagonalizable matrix whose eigenvalues are all real.

By Lemma \ref{eig1},
			there exist $r_1\geq r_2\geq 0$ and $u_1=\begin{pmatrix}
			x_1 \\
			y_1
			\end{pmatrix},u_2=\begin{pmatrix}
			x_2 \\
			y_2
			\end{pmatrix}$, such that $u_1,Ju_1,u_2,Ju_2$ are $\R$-linear independent, and
			\begin{equation}\label{eig2}
			R_Su_k=r_k u_k,\quad R_S(Ju_k)=-r_k Ju_k,\qquad \forall k=1,2.
			\end{equation}
			Thus $v_1:=x_1+iy_1$ and $v_2:=x_2+iy_2$ are $\C$-linear independent, and $\mc{R}_S(v_1)=r_1 v_1$,
			$\mc{R}_S(v_2)=r_2 v_2$. In other words, $T:=(v_1\ v_2)$ satisfies
			\begin{equation}
			S=\bar{T}\begin{pmatrix}
		r_1 & 0\\0 & r_2
			\end{pmatrix}T^{-1}.
			\end{equation}
			Taking the determinants of the both sides gives $r_1 r_2=1$ and $|T|\in \R$. Without loss of generality we can assume $|T|=1$,
			and hence $S\stackrel{conj}{\sim} \begin{pmatrix}
			e^u & 0\\
			0 & e^{-u}
			\end{pmatrix}$ with $u\geq 0$. In conjunction with (\ref{eig2}),
			$S_1\stackrel{conj}{\sim} S_2$ if and only if $R_{S_1}\sim R_{S_2}$ for $S_1,S_2$ belonging to this case.
			
			\item {\bf Case III:} $R_S$ is a non-diagonalizable matrix whose eigenvalues are all real.

Due to Lemma \ref{eig1},
			there exists $r\geq 0$, such that
			\begin{equation}
			R_S\sim \begin{pmatrix}
			J_1 & \\
			& J_2
			\end{pmatrix}
			\qquad \text{with }J_1:=\begin{pmatrix}
			r & 1\\
			0 & r
			\end{pmatrix}, J_2:=\begin{pmatrix}
			-r & 1\\
			0 & -r
			\end{pmatrix}.
			\end{equation}
			This means the existence of $u_1=\begin{pmatrix}
			x_1 \\
			y_1
			\end{pmatrix},u_2=\begin{pmatrix}
			x_2 \\
			y_2
			\end{pmatrix}$, such that $u_1,Ju_1,u_2,Ju_2$ are $\R$-linear independent, and
			\begin{equation}
			R_Su_1=r u_1,\quad R_Su_2=r u_2+u_1.
			\end{equation}
			Similarly to above, we can deduce that
			\begin{equation}
			S=\bar{T}\begin{pmatrix}
			r & 1\\
			0 & r
			\end{pmatrix}T^{-1}
			\end{equation}
			with $T:=(x_1+iy_1\ x_2+iy_2)$ being a non-singular matrix. Again taking the determinants of the both sides gives $|T|\in \R$
			and $r=1$. We can assume $|T|=1$ without loss of generality, and hence $S\stackrel{conj}{\sim} \begin{pmatrix}
			1 & 1\\
			0 & 1
			\end{pmatrix}$.
Therefore, $S_1,S_2$ belonging to this case are conjugate similar to each other, and $R_{S_1}\sim R_{S_2}$.

		\end{itemize}

	\end{proof}

Conjugate eigenvectors play a crucial part in the study of the structure of hyperplanes in $Q_{1,1}^+$, as shown in the
following proposition:
	
	\begin{pro}\label{conj2}
		Let $H$ be a hyperplane in $Q_{1,1}$, such that $\Psi(H)=\{(w,\mc{M}_S(w)):w\in \C^*\}$ with $S\in SL(2,\C)$, and $H^+:=H\cap Q_{1,1}^+$,
 then
$\Psi(H^+)$ is just the graph of $\mc{M}_S$ over $\C^*\backslash E_S$, where
\begin{equation}
E_S= \left\{w=\f{v_1}{v_2}: \begin{pmatrix} v_1\\ v_2\end{pmatrix}\text{ is a conjugate eigenvector of } S\right\}.
\end{equation}
	\end{pro}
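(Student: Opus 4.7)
The plan is to reduce the statement to an algebraic identification of $E_S$ with the set $\{w\in\C^*:\mc{M}_S(w)=\bar w\}$, and then establish that identification by direct computation.

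First, I invoke Proposition \ref{com_str}(2), which characterises $\Psi(Q_{1,1}^+)=\{(w_1,w_2)\in\C^*\times\C^*:w_2\neq\bar w_1\}$. Combined with the hypothesis $\Psi(H)=\{(w,\mc M_S(w)):w\in\C^*\}$ and the injectivity of $\Psi$, this yields
$$\Psi(H^+)=\{(w,\mc M_S(w)):w\in\C^*,\ \mc M_S(w)\neq\bar w\}.$$
Hence it suffices to prove the set-level equality $\{w\in\C^*:\mc M_S(w)=\bar w\}=E_S$.

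For the inclusion $E_S\subset\{w:\mc M_S(w)=\bar w\}$, write $S=\begin{pmatrix}a&b\\c&d\end{pmatrix}$ and take a conjugate eigenvector $v=(v_1,v_2)^T$ with $Sv=r\bar v$ (automatically $r>0$, since $S$ is invertible). When $v_2\neq 0$, dividing the two component equations $av_1+bv_2=r\bar v_1$ and $cv_1+dv_2=r\bar v_2$ produces $\mc M_S(v_1/v_2)=\bar v_1/\bar v_2=\overline{v_1/v_2}$. When $v_2=0$ (so $v_1\neq 0$), the second equation forces $c=0$, whence $\mc M_S(\infty)=a/c=\infty=\bar\infty$.

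For the reverse inclusion, given $w\in\C^*$ with $\mc M_S(w)=\bar w$, I construct a conjugate eigenvector of $S$ whose ratio is $w$. For $w\in\C$, try $v_\lambda:=\lambda(w,1)^T$ with $\lambda\in\C^\times$ to be determined. The hypothesis $aw+b=\bar w(cw+d)$ gives $Sv_\lambda=\lambda(cw+d)(\bar w,1)^T$, while $\bar v_\lambda=\bar\lambda(\bar w,1)^T$; so $Sv_\lambda=r\bar v_\lambda$ reduces to the scalar equation $\lambda(cw+d)=r\bar\lambda$. Now $cw+d\neq 0$, for otherwise $aw+b=0$ as well, contradicting $\det S=1$; choosing $\arg\lambda=-\tfrac12\arg(cw+d)$ then makes $r=|cw+d|>0$, as required. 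The case $w=\infty$ is parallel: $\mc M_S(\infty)=\infty$ forces $c=0$ and hence $a\neq 0$, and $v_\lambda=\lambda(1,0)^T$ with a suitable phase of $\lambda$ gives a conjugate eigenvector with ratio $\infty$.

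The main (essentially only) subtlety is the nonnegativity constraint $r\geq 0$ built into the definition of conjugate eigenvalue; without it, the identification with $\{w:\mc M_S(w)=\bar w\}$ would follow immediately upon normalising the second coordinate of $v$. The phase rescaling of $\lambda$ above is exactly the device needed to arrange $r\geq 0$, and $\det S=1$ is used precisely to guarantee that $cw+d$ never vanishes in the process.
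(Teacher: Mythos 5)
Your proof is correct and follows essentially the same route as the paper: both reduce, via Proposition \ref{com_str}, to identifying $E_S$ with the solution set of $\mc{M}_S(w)=\bar{w}$, and both use the same phase-rescaling of a representative vector (the paper writes $\la=re^{2i\th}$ and replaces $z$ by $e^{-i\th}z$; you choose $\arg\lambda$) to meet the nonnegativity requirement in the definition of a conjugate eigenvalue. Your treatment is slightly more explicit about the $w=\infty$ case and the nonvanishing of $cw+d$, but these are details of the same argument.
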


\begin{proof}
By Proposition \ref{com_str},
	\begin{equation}
	\Psi(H^+)=\{(w,\mc{M}_S(w)):\mc{M}_S(w)\neq \bar{w}\}.
	\end{equation}
So it suffices to clarify all solutions of $\bar{w}=\mc{M}_S(w)$.

If $\bar{w}=\mc{M}_S(w)$, then for $(z_1,z_2)\neq (0,0)$ satisfying
	$\f{z_1}{z_2}=w$ and $z:=\begin{pmatrix} z_1\\ z_2\end{pmatrix}$, we have $Sz=\la \bar{z}$ with $\la$ a non-zero complex number. Denote
	$\la=r e^{2i\th}$ with $r=|\la|> 0$, then putting $v:=e^{-i\th}z$ gives $Sv=r \bar{v}$, i.e. $v$ is a conjugate eigenvector of $S$.
	Conversely, a conjugate eigenvector $\begin{pmatrix} v_1\\ v_2\end{pmatrix}$ of $S$ immediately forces $\bar{w}=\mc{M}_S(w)$ with
	$w:=\f{v_1}{v_2}$. This completes the proof of the present proposition.
\end{proof}

	In conjunction with (\ref{inv_F}), Theorem \ref{class1}, Theorem \ref{class2} and Proposition \ref{conj2}, we obtain the classification of all hyperplanes
	in $Q_{1,1}$ (or $Q_{1,1}^+$) as follows:
	\begin{thm}\label{class3}
	Each hyperplane $H_A\subset Q_{1,1}$ with $[A]\in \Bbb{CP}^{2,1}$ can be categorized in one and only one of the following classes:
		\begin{itemize}
			\item  $[A]\in Q_{1,1}$, i.e. $\Psi(H_A)=\{(w_1,w_2)\in \C^*\times \C^*:w_1=c_1\text{ or }w_2=c_2\}$:
			\begin{enumerate}
				\item [(a)] $[A]\in Q_{1,1}^+$ if and only if $c_2\neq \bar{c}_1$. In this case, $[A]\sim [(1,i,0,0)]$.
				\item [(b)] $[A]\notin Q_{1,1}^+$ if and only if $c_2=\bar{c}_1$. In this case, $[A]\sim [\mb{u}_N]$,
			\end{enumerate}
			
			\item $[A]\notin Q_{1,1}$, i.e. $\Psi(H_A)=\{(w,\mc{M}_S(w)):w\in \C^*\}$ with $S\in SL(2,\C)$:
			\begin{enumerate}
				\item [(a)] $S$ has infinite many conjugate eigenvalues if and only if $[A]\sim [\mb{u}_S]$.
  In this case, $S\stackrel{conj}{\sim} \begin{pmatrix}
				1 & 0\\
				0 & 1
				\end{pmatrix}$ and $H_A^+$ is conformally equivalent to $\C\backslash \R$.
				
				
				\item [(b)] $S$ has 2 distinct conjugate eigenvalues if and only if $H_A$ is a hyperplane of hyperbolic type.
				In this case, $S\stackrel{conj}{\sim} \begin{pmatrix}
				e^u & 0\\
				0 & e^{-u}
				\end{pmatrix}$ if and only if $[A]\sim [(\sinh u,i\cosh u,0,0)]$, where $u\in (0,+\infty)$, and $H_A^+$ is conformally equivalent to $\C\backslash \{0\}$.
				
				\item [(c)] $S$ has no conjugate eigenvalue if and only if $[A]\sim [\mb{u}_T]$ (totally real case) or $H_A$ is a hyperplane of elliptic type (non-totally real case).
				In this case, $S\stackrel{conj}{\sim} \begin{pmatrix}
				0 & ie^{-i\a}\\
				ie^{i\a} & 0
				\end{pmatrix}$ if and only if $[A]\sim [(0,0,\cos\a,i\sin\a)]$, where $\a\in (0,\f{\pi}{2}]$, and $H_A^+$ is conformally equivalent to $\C^*$.

				\item [(d)] $S$ has exactly $1$ conjugate eigenvalue if and only if $H_A$ is a hyperplane of parabolic type. In this case,
				$S\stackrel{conj}{\sim} \begin{pmatrix}
				1 & 1\\
				0 & 1
				\end{pmatrix}$, $[A]\sim [(1,0,i,i)]$ and $H_A^+$ is conformally equivalent to $\C$.
				
			\end{enumerate}
			
		\end{itemize}
		
	\end{thm}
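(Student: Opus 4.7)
The plan is to combine three ingredients already established: Proposition \ref{hp2}, Theorem \ref{class1}, and Theorem \ref{class2}. Proposition \ref{hp2} splits the classification according to whether $[A]\in Q_{1,1}$ (the ``cross'' case) or $[A]\notin Q_{1,1}$ (the ``graph of $\mc{M}_S$'' case); Theorem \ref{class1} supplies the canonical $SO^+(3,1)$-representatives of $[A]$; Theorem \ref{class2} supplies the canonical conjugate-similarity representatives of $S\in SL(2,\C)$, which by Proposition \ref{conj3} correspond bijectively to $SO^+(3,1)$-orbits of hyperplanes in the second case. For every canonical form I would compute $E_S$ via Proposition \ref{conj2} and then read off the corresponding representative $[A]$ by inserting $S$ into the formula $A=(a-d,\,i(a+d),\,b+c,\,c-b)$ derived in the proof of Proposition \ref{hp2}.

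First I would treat the case $[A]\in Q_{1,1}$. By Proposition \ref{hp2}(2), $\Psi(H_A)=\{w_1=c_1\}\cup\{w_2=c_2\}$ with $(c_1,c_2)=\Psi([A])$, and $[A]\in Q_{1,1}^+$ is equivalent to $c_2\neq \bar{c}_1$. When $c_2=\bar{c}_1$, substituting into the inverse formula (\ref{inv_F}) yields $A=(2\mathrm{Re}\,c_1,\,2\mathrm{Im}\,c_1,\,1-|c_1|^2,\,1+|c_1|^2)$, a real null vector, so by the totally real subcase (c) of Theorem \ref{class1} we have $[A]\sim [\mb{u}_N]$, giving subcase (b) of the statement. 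When $c_2\neq \bar{c}_1$, the single-orbit conclusion follows from the fact that $\mb{G}_{1,1}^2\cong Q_{1,1}^+$ is homogeneous under $SO^+(3,1)$; direct substitution of $[(1,i,0,0)]$ into (\ref{w12}) gives $(c_1,c_2)=(0,\infty)$, exhibiting the canonical representative of subcase (a).

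For the case $[A]\notin Q_{1,1}$ I would translate each canonical $S$ from Theorem \ref{class2} into the pair $([A],E_S)$. The unipotent form $S=\begin{pmatrix}1 & 1\\ 0 & 1\end{pmatrix}$ yields $A=(0,2i,1,1)$, whose real and imaginary parts span a light-like plane, so $[A]$ is parabolic by Theorem \ref{class1}; and solving $Sv=r\bar v$ forces $v_2=0$, whence $E_S=\{\infty\}$. The diagonal family $S=\mathrm{diag}(e^u,e^{-u})$ gives $[A]\sim [(\tanh u,i,0,0)]$, which for $u>0$ is of hyperbolic type with $E_S=\{0,\infty\}$ (obtained by restricting to $v_1=0$ or $v_2=0$); at the degenerate endpoint $u=0$ one has $S=I$, hence $[A]\sim [\mb{u}_S]$ and $E_S=\R\cup\{\infty\}$, because every nonzero real $v$ is then a conjugate eigenvector with $r=1$. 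Finally, the form $S=\begin{pmatrix}0 & ie^{-i\a}\\ ie^{i\a} & 0\end{pmatrix}$ yields $[A]\sim [(0,0,\cos\a,i\sin\a)]$, which is of elliptic type for $\a\in(0,\pi/2)$ and degenerates at $\a=\pi/2$ to the totally real time-like class $[\mb{u}_T]$; the equation $Sv=r\bar v$ forces $|v_1|^2=-|v_2|^2$, so $E_S=\emptyset$. The conformal models $\C\setminus\R$, $\C\setminus\{0\}$, $\C^*$, $\C$ for $H_A^+$ in the four subcases then follow by identifying $H_A^+$ with $\C^*\setminus E_S$ via $(w_1,\mc{M}_S(w_1))\mapsto w_1$.

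The main obstacle will be the bookkeeping that merges the two classification schemes. The statement groups hyperplanes not strictly by the type of $[A]$ in Theorem \ref{class1} but by the cardinality structure of $E_S$; this causes the totally real space-like representative $[\mb{u}_S]$ to appear inside the non-$Q_{1,1}$ portion as the boundary $u=0$ of the diagonal (hyperbolic) family, and similarly $[\mb{u}_T]$ to appear as the boundary $\a=\pi/2$ of the elliptic family. Verifying that the conjugate-similarity orbits of Theorem \ref{class2}, the $SO^+(3,1)$-orbits of Theorem \ref{class1}, and the partition by $|E_S|$ all line up consistently, with no orbit counted twice or missed, is the delicate part of the argument.
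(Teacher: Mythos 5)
Your proposal is correct and follows essentially the same route as the paper, which states Theorem \ref{class3} as a direct synthesis of (\ref{inv_F}), Proposition \ref{hp2}, Theorem \ref{class1}, Theorem \ref{class2} and Proposition \ref{conj2} without a separate proof; your case-by-case computation of $E_S$ and of $A=(a-d,i(a+d),b+c,c-b)$ for each canonical form is exactly the intended verification. (Two negligible slips: the unipotent $S$ gives $A=(0,2i,1,-1)$, not $(0,2i,1,1)$, which is still parabolic; and in the elliptic case the equation $Sv=r\bar v$ yields $|v_1|^2=e^{-2i\alpha}|v_2|^2$, impossible for $\alpha\in(0,\pi/2]$ unless $v=0$, rather than literally $|v_1|^2=-|v_2|^2$.)
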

	
	\subsection{Metrics of hyperplanes in $Q_{1,1}^+$}\label{metric4}
	
	The metric of an arbitrary hyperplane $H^+:=H\cap Q_{1,1}^+$ is induced by the canonical metric of
	$Q_{1,1}^+$, which is invariant under $SO^+(3,1)$-action. Hence each $\si\in SO^+(3,1)$ which maps
	$H$ onto itself yields an isometry of $H^+$. Let $\mc{M}_T$ ($T\in SL(2,\C)$) be the M\"{o}bius transformation
corresponding to $\si$; if $\Psi(H)$ is the graph of $\mc{M}_S$, then (\ref{mob1}) and (\ref{action1}) enable us to conclude that
	$\Psi(\si(H))$ is the graph of $\mc{M}_{\bar{T}ST^{-1}}$, as in (\ref{action}).
 Hence $\si(H)=H$ if and only if $\bar{T}ST^{-1}=\pm S$. Namely,
	\begin{equation}
	G_S:=\{T\in SL(2,\C):\bar{T}ST^{-1}=\pm S\}
	\end{equation}
	is a Lie group, such that the $G_S$-action
	\begin{equation}
	T\cdot \Psi^{-1}(w,\mc{M}_S(w))=\Psi^{-1}(\mc{M}_T(w),\mc{M}_{\bar{T}S}(w))\qquad \forall w\in \C^*\backslash E_S
	\end{equation}
	on $H^+$ keeps its metric invariant. Let $\textswab{g}_S$ be the Lie algebra of $G_S$, then
	\begin{equation}
	\textswab{g}_S=\{X\in sl(2,\C):\bar{X}S-SX=0\}.
	\end{equation}

	Note that any 2 hyperplanes $H_1^+,H_2^+\subset Q_{1,1}^+$ lying in the same equivalence class of $SO^+(3,1)$-action are isometric to each other, it suffices to consider the metrics of representative hyperplanes as follows:
	
	\begin{itemize}
		\item {\bf Case I.} $S=\begin{pmatrix}
		1 & 0\\
		0 & 1
		\end{pmatrix}$. Then a direct calculation shows $\textswab{g}_S=sl(2,\R)$ and $G_S=SL(2,\R)\cup \text{diag}(i,-i)SL(2,\R)$, which is a $3$-dimension Lie group
		acting transitively on $H^+$. In fact, substituting $w_2=w_1=w$ into (\ref{g2}) gives
		\begin{equation}
		g=-\f{|dw|^2}{(\text{Im }w)^2}\qquad w\in \C\backslash \R.
		\end{equation}
		Therefore, $(H^+,-g)$ has 2 connected component and each component is isometric to the complete hyperbolic plane with the constant Gauss curvature $-1$.
		
		\item {\bf Case II.} $S=\begin{pmatrix}
		0 & 1\\
		-1 & 0
		\end{pmatrix}$. Then $\textswab{g}_S=su(2,\C)$ and $G_S=SU(2,\C)$, which acts transitively on $H^+$.
		In fact, substituting $w_1=w$ and $w_2=-\f{1}{w}$ into (\ref{g2}) gives
		\begin{equation}
		g=\f{4|dw|^2}{(1+|w|^2)^2}
		\end{equation}
		Therefore, $(H^+,g)$ is isometric to the unit sphere equipped with the canonical metric.
		
		\item {\bf Case III.} $S=\begin{pmatrix}
		e^u & 0\\
		0 & e^{-u}
		\end{pmatrix}$ with $u\in (0,+\infty)$. Then
\begin{equation}\textswab{g}_S=\R \begin{pmatrix}
		1 & 0\\
		0 & -1
		\end{pmatrix}, G_S=\left\{\begin{pmatrix}
		\la & 0\\
		0 & \la^{-1}
		\end{pmatrix}:\la\in \R\text{ or }i\R,\la\neq 0 \right\}.
\end{equation}
		Therefore, $H^+$ is diffeomorphic to $\C\backslash \{0\}$ and the metric $g$ is invariant under the scaling
		$z\mapsto kz$ for each $k\in \R\backslash \{0\}$. In fact, substituting $w_1=w$ and $w_2=e^{2u}w$ into (\ref{g2})
		and then letting $w=e^{t+i\th}$ with $t\in \R, \th\in \R/(2\pi\Bbb{Z})$ implies
		\begin{equation}
		g=\text{Re}\left[\f{4}{(e^{-u}e^{-i\th}-e^{u}e^{i\th})^2}\right](dt^2+d\th^2).
		\end{equation}
Let $dA_g$ be the area form associated to $g$, then $dA_g$ is also
		invariant under the scaling and hence $\int_{H^+} dA_g$ is divergent.
		
		\item {\bf Case IV.} $S=\begin{pmatrix}
		0 & ie^{-i\a}\\
		ie^{i\a} & 0
		\end{pmatrix}$ with $\a\in (0,\f{\pi}{2})$. Then
\begin{equation}\textswab{g}_S=\R \begin{pmatrix}
		i & 0\\
		0 & -i
		\end{pmatrix}, G_S=\left\{\begin{pmatrix}
		e^{it} & 0\\
		0 & e^{-it}
		\end{pmatrix}:t\in \R/(2\pi\Bbb{Z}) \right\}.
\end{equation}
		Therefore, $H^+$ is diffeomorphic to $\C^*$ and the metric $g$ is invariant under the rotation
		$z\mapsto e^{i\be}z$ for each $\be\in \R/(2\pi\Bbb{Z})$. In fact, substituting $w_1=w$ and $w_2=e^{-2i\a}w^{-1}$ into (\ref{g2})
		and then letting $w=e^{t+i\th}$ with $t\in \R$, $\th\in \R/(2\pi\Bbb{Z})$ implies
		\begin{equation}
		g=-\text{Re}\left[\f{4}{(e^{-t} e^{-i\a}-e^{t}e^{i\a})^2}\right](dt^2+d\th^2).
		\end{equation}
A direct calculation shows
		\begin{equation}\label{area}
		\int_{H^+}dA_g=4\pi.
		\end{equation}
		
		\item {\bf Case V.} $S=\begin{pmatrix}
		1 & 1\\
		0 & 1
		\end{pmatrix}$. Then
\begin{equation}\textswab{g}_S=\R \begin{pmatrix}
		0 & 1\\
		0 & 0
		\end{pmatrix}, G_S=\left\{\pm\begin{pmatrix}
		1 & t\\
		0 & 1
		\end{pmatrix}:t\in \R \right\}.
\end{equation}
Therefore, $H^+$ is diffeomorphic to $\C$ and the metric $g$ is invariant under the parallel translation
		$z\mapsto z+t$ for each $t\in \R$. In fact, substituting $w_1=w$ and $w_2=w+1$ into (\ref{g2})
		and then letting $w=x+iy$ with $x,y\in \R$ implies
		\begin{equation}
		g=\text{Re}\left[\f{4}{(1+2iy)^2}\right](dx^2+dy^2).
		\end{equation}
Since $dA_g$ is also
		invariant under the parallel translation, $\int_{H^+} dA_g$ should be divergent.

	\end{itemize}

On the other hand, for $H^+:=H_A^+$ with $[A]\in Q_{1,1}$, $\Psi(H^+)$ lies in $(\{c_1\}\times \C^*)\cup (\C^*\times \{c_2\})$ with $c_1,c_2\in \C^*$.
	Taking $w_1\equiv c_1$ or $w_2\equiv c_2$ in (\ref{g2}) forces $g=0$, i.e. the induced metric on $H^+$ vanishes everywhere.

	\bigskip\bigskip
	
	\Section{The Value distribution for Gauss maps of complete stationary surfaces}{The Value distribution for Gauss maps of complete stationary surfaces}\label{S3}
	
	Let $\mb{x}:M\ra \R^{3,1}$ be an oriented surface in the Minkowski space. If the pull-back metric $ds^2:=\lan d\mb{x},d\mb{x}\ran$
	is positive definite everywhere, $M$ is called a {\it space-like surface}. Moreover, $M$ is said to be {\it stationary} whenever
	the mean curvature vector field $\mb{H}\equiv 0$. $M$ is stationary if and only if the restriction of each coordinate function
	on $M$ is harmonic.
	
	Via parallel translation, $G:p\in M\mapsto T_p M$ gives a smooth mapping from $M$ into the Lorentz Grassmannian manifold $\mb{G}_{1,1}^2$,
	which is called the {\it generalized Gauss map} of $M$.
	Let $(u,v)$ be local oriented isothermal parameters on a neighborhood of $p$, then $\lan \mb{x}_u,\mb{x}_u\ran=\lan \mb{x}_v,\mb{x}_v\ran>0$
	and $\lan \mb{x}_u,\mb{x}_v\ran=0$. Denote by $z:=u+iv$ the local complex coordinate
 of $M$, then $[\mb{x}_z]=[\f{1}{2}(\mb{x}_u-i\mb{x}_v)]$ is the point in $Q_{1,1}^+$ corresponding
	to the Gauss image of $p$, which is independent of the choice of isothermal coordinates. Denote
	\begin{equation}
	\phi=(\phi_1,\phi_2,\phi_3,\phi_4):=\Big(\pd{x_1}{z},\pd{x_2}{z},\pd{x_3}{z},\pd{x_4}{z}\Big)dz=\mb{x}_z dz
	\end{equation}
	then the harmonicity of $x_k$ forces $\phi_k$ to be a holomorphic 1-form that can be globally defined on $M$.
	$[\phi]=[\mb{x}_z]\in Q_{1,1}^+$ is equivalent to saying that
	\begin{eqnarray}
	\phi_1^2+\phi_2^2+\phi_3^2-\phi_4^2&=&0,\label{phi1}\\
	|\phi_1|^2+|\phi_2|^2+|\phi_3|^2-|\phi_4|^2&>&0\label{phi2}.
	\end{eqnarray}
	Conversely, if $\phi_1,\phi_2,\phi_3,\phi_4$ be holomorphic 1-forms on a Riemann surface $M$, satisfying
	(\ref{phi1}) and the period condition
	\begin{equation}\label{phi3}
	\text{Re}\oint_\g \phi_i=0\qquad \forall i\in \{1,2,3,4\}
	\end{equation}
	with $\g$ an arbitrary closed curve in $M$, then
	\begin{equation}\label{W6}
	\mb{x}:=\text{Re}\int (\phi_1,\phi_2,\phi_3,\phi_4)
	\end{equation}
	defines a {\it generalized stationary surface} in $\R^{3,1}$, equipped with the induced metric
 \begin{equation}
 ds^2=\lan \phi,\bar{\phi}\ran=|\phi_1|^2+|\phi_2|^2+|\phi_3|^2-|\phi_4|^2.
 \end{equation}
 Moreover, $ds^2$ is positive definite if and only if Condition (\ref{phi2}) holds.
 This is the {\it Weierstrass representation} for stationary
space-like surfaces.
	
Noting that $Q_{1,1}$ is conformally equivalent to $\C^*\times \C^*$ (see Proposition \ref{com_str}), we define
\begin{equation}
(\psi_1,\psi_2):=\Psi([\mb{x}_z])
\end{equation}
and the definition of $\Psi$ implies
\begin{itemize}
\item $\psi_1=\f{\phi_1+i\phi_2}{\phi_3+\phi_4}$, $\psi_2=\f{\phi_1-i\phi_2}{\phi_3+\phi_4}$ whenever $\phi_3+\phi_4\not\equiv 0$;
\item $\psi_1=\f{\phi_4-\phi_3}{\phi_1-i\phi_2}$, $\psi_2\equiv \infty$ whenever $\phi_3+\phi_4\equiv 0$, $\phi_1+i\phi_2\equiv 0$
and $\phi_1-i\phi_2\not\equiv 0$;
\item $\psi_1\equiv\infty$, $\psi_2=\f{\phi_4-\phi_3}{\phi_1+i\phi_2}$ whenever $\phi_3+\phi_4\equiv 0$, $\phi_1-i\phi_2\equiv 0$ and $\phi_1+i\phi_2\not\equiv 0$;
\item $\psi_1\equiv \infty$, $\psi_2\equiv \infty$ whenever $\phi_3+\phi_4\equiv 0$, $\phi_1-i\phi_2\equiv 0$ and $\phi_1+i\phi_2\equiv 0$.
\end{itemize}
Hence $\psi_1$ and $\psi_2$ are both meromorphic functions on $M$, which can be seen as the 2 components of the Gauss map of $M$.

As in \cite{M-W-W}, we let
\begin{equation}
dh:=\f{1}{2}(\phi_3+\phi_4)
\end{equation}
 be the {\it height differential}. if $dh \equiv 0$, then (\ref{phi1}) implies $\phi_1-i\phi_2\equiv 0$ or $\phi_1+i\phi_2\equiv 0$. Hence
\begin{equation}\label{phi6}
\phi=(\phi_1,\pm i\phi_1,\phi_3,-\phi_3)
\end{equation}
and $\phi_1$ has no zero.
	Otherwise, the zeros of $dh$ is discrete.
 Based on (\ref{phi1}), we can proceed as in the proof
of Proposition \ref{com_str} to get
	\begin{equation}\label{phi4}
	\phi=(\psi_1+\psi_2,-i(\psi_1-\psi_2),1-\psi_1\psi_2,1+\psi_1\psi_2)dh,
	\end{equation}
	\begin{equation}\label{phi5}
	ds^2=\lan \phi,\bar{\phi}\ran=2|\psi_1-\bar{\psi_2}|^2|dh|^2
	\end{equation}
	and the conditions (\ref{phi2})-(\ref{phi3}) are equivalent to the following constraints on the {\it W-data} $(\psi_1,\psi_2,dh)$  (see \cite{M-W-W}):
	\begin{itemize}
		\item $\psi_2\neq \bar{\psi}_1$ everywhere;
		\item  $p\in M$ is a zero of $dh$ if and only if $p$ is a pole of $\psi_i$ for a unique $i\in \{1,2\}$, with the same order;
		\item  $\oint_\g \psi_1 dh=-\overline{\oint_\g \psi_2 dh}$ and $\text{Re}\oint_\g dh=\text{Re}\oint_\g \psi_1\psi_2 dh=0$ for
		each closed path $\g$.
	\end{itemize}
	
	
	Combining with the Gauss equations and $\mb{H}\equiv 0$, we arrive at
	\begin{equation}
	G^* g=-Kds^2,
	\end{equation}
	where $g$ is the canonical metric on $Q_{1,1}^+$ and $K$ is the Gauss curvature. The metric expressions (\ref{g2}) and (\ref{phi5}) of $Q_{1,1}^+$
	and $M$ enable us to get
	\begin{equation}\label{K}
	K=-\text{Re}\left[\f{2\bar{\psi}'_1\psi'_2(\psi_1-\bar{\psi}_2)^2}{|\bar{\psi}_1-\psi_2|^6}\right]
	\end{equation}
	with $\psi'_i:=\f{d\psi_i}{dh}$ for $i\in \{1,2\}$.

Let $\pi:\td{M}\ra M$ be the universal covering map, $\td{\mb{x}}:=\mb{x}\circ \pi$, $\td{G}:=G\circ \pi$, $\td{\psi}_i:=\psi_i\circ \pi$
($i\in \{1,2\}$) be lifting of $\mb{x},G,\psi_i$, respectively, and $\td{\phi}:=\pi^* \phi$, $d\td{h}:=\pi^* dh$ be respectively
the pull-back of $\phi,dh$, then it is easy to verify that:
\begin{itemize}
\item $\td{\mb{x}}:\td{M}\ra \R^{3,1}$ gives a simply-connected space-like stationary surface.
\item All terms of $\td{\phi}$ are holomorphic 1-forms on $\td{M}$, and $\lan \td{\phi},\td{\phi}\ran=0$.
\item $\td{G}$ is just the Gauss map of $\td{M}$
and $(\td{\psi}_1,\td{\psi}_2,d\td{h})$ becomes the W-data of $\td{M}$. In particular, for $i=1$ or $2$, $\td{\psi}_i$ is a constant function if and only if $\psi_i$ is.
\item $\pi:(\td{M},d\td{s}^2)\ra (M,ds^2)$ is a local isometry, with $d\td{s}^2$ the induced metric of $\td{M}$. As
a corollary, $d\td{s}^2$ is a complete metric if and only if $ds^2$ is.
\end{itemize}

Assume $M$ is a simply connected Riemann surface and $\phi_i$ is a holomorphic $1$-form
on $M$ for each $1\leq i\leq 4$, satisfying Condition (\ref{phi1}). Then $\oint_\g \phi_i=0$ automatically holds
and $\mb{x}:M\ra \R^{3,1}$ defined by (\ref{W6}) gives a generalized stationary surface in $\R^{3,1}$. Letting
\begin{equation}
\phi^*=(\phi_1^*,\phi_2^*,\phi_3^*,\phi_4^*):=(\phi_1,\phi_2,\phi_3,i\phi_4)
\end{equation}
gives
\begin{equation}
(\phi_1^*)^2+(\phi_2^*)^2+(\phi_3^*)^2+(\phi_4^*)^2=0
\end{equation}
and hence
\begin{equation}
\mb{x}^*:=\text{Re}\int (\phi_1^*,\phi_2^*,\phi_3^*,\phi_4^*)
\end{equation}
defines a generalized minimal surface in $\R^4$ (see e.g \cite{C-O,H-O}), which is called the {\it dual immersion} of $\mb{x}$.
Obviously $\mb{x}\leftrightarrow \mb{x}^*$ gives a one-to-one correspondence between all simply-connected generalized stationary surfaces in $\R^{3,1}$
and all simply-connected generalized minimal surfaces in $\R^4$, which has the following properties:
\begin{itemize}
\item Let $(ds^*)^2$ and $ds^2$ be metrics induced by $\mb{x}^*$ and $\mb{x}$, respectively, then
\begin{equation}
(ds^*)^2=|\phi_1^*|^2+|\phi_2^*|^2+|\phi_3^*|^2+|\phi_4^*|^2\geq |\phi_1|^2+|\phi_2|^2+|\phi_3|^2-|\phi_4|^2=ds^2.
\end{equation}
Therefore, $\mb{x}^*:M\ra \R^4$ is a minimal surface whenever $\mb{x}: M\ra \R^{3,1}$ is a
space-like stationary surface, and $(ds^*)^2$ is complete whenever $ds^2$ is.
\item Letting
\begin{equation}
\psi_1^*:=\f{\phi_1^*+i\phi_2^*}{\phi_3^*-i\phi_4^*},\quad \psi_2^*:=\f{\phi_1^*-i\phi_2^*}{\phi_3^*-i\phi_4^*},\quad dh^*:=\f{1}{2}(\phi_3^*-i\phi_4^*),
\end{equation}
we see $\mb{x}^*:M\ra \R^4$ and $\mb{x}:M\ra \R^{3,1}$ share the same W-data.
\end{itemize}

Thereby, in conjunction with Fujimoto's theorem \cite{F} on complete minimal surfaces in $\R^4$, we obtain the following Berntein-type theorem:

	\begin{thm}
		Let $\mb{x}:M\ra \R^{3,1}$ be a complete space-like stationary surface, $(\psi_1,\psi_2)$ be the Gauss map of $M$,
and $q_i$ ($i=1$ or $2$) be the number of points in $\C^*$ that $\psi_i$ does not take ($q_i$ could be $0$ or $\infty$).
 If neither $\psi_1$ nor $\psi_2$ are constant, then $\min\{q_1,q_2\}\le 3$ or $q_1=q_2=4$.		
	\end{thm}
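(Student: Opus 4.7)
The plan is to reduce this Berntein-type result to Fujimoto's Theorem \ref{be} via the duality $\mb{x}\leftrightarrow \mb{x}^*$ between simply-connected generalized stationary surfaces in $\R^{3,1}$ and generalized minimal surfaces in $\R^4$ that was set up at the end of the excerpt. Since the duality requires simple connectedness, I first pass to the universal cover: let $\pi:\td{M}\ra M$ be the universal covering, and put $\td{\mb{x}}:=\mb{x}\circ \pi$. As recorded above, $\td{\mb{x}}$ is a simply-connected complete space-like stationary surface whose W-data is $(\td{\psi}_1,\td{\psi}_2,d\td{h})$ with $\td{\psi}_i=\psi_i\circ \pi$. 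Because $\pi$ is surjective, $\psi_i(M)=\td{\psi}_i(\td{M})$, so the two functions have the same exceptional value count $q_i$, and $\td{\psi}_i$ is nonconstant if and only if $\psi_i$ is.

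Next I apply the duality on $\td{M}$: since $\td{M}$ is simply connected, $\td{\mb{x}}^*:\td{M}\ra \R^4$ is a well-defined generalized minimal surface sharing the W-data $(\td{\psi}_1,\td{\psi}_2,d\td{h})$, so its Gauss map is literally $(\td{\psi}_1,\td{\psi}_2)$. The inequality $(d\td{s}^*)^2\geq d\td{s}^2$ recorded in the excerpt shows that $(\td{M},(d\td{s}^*)^2)$ is complete. Moreover $\td{M}$ is space-like, so $d\td{s}^2>0$ everywhere, and consequently $(d\td{s}^*)^2>0$ everywhere; hence $\td{\mb{x}}^*$ is in fact a genuine (non-degenerate) minimal immersion, not merely a generalized one.

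The assumption that neither $\psi_1$ nor $\psi_2$ is constant now tells us that the Gauss map $(\td{\psi}_1,\td{\psi}_2)$ of $\td{M}^*$ is itself nonconstant, and so $\td{M}^*\subset \R^4$ is a nonflat complete minimal surface. Applying Fujimoto's Theorem \ref{be} to $\td{M}^*$ gives $\min\{q_1,q_2\}\leq 3$ or $q_1=q_2=4$ for the exceptional value counts of $(\td{\psi}_1,\td{\psi}_2)$, and by the matching of counts under $\pi$ this is exactly the conclusion for $(\psi_1,\psi_2)$.

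The conceptual engine, namely the one-to-one correspondence $\mb{x}\leftrightarrow \mb{x}^*$ and the metric inequality $(ds^*)^2\geq ds^2$, has already been built in the excerpt, so what remains is only routine: the identification of exceptional values under the universal cover and the non-vanishing of $(d\td{s}^*)^2$. I expect no real obstacle here; the main point worth emphasizing in the write-up is that although the Gauss maps of $\td{M}$ and $\td{M}^*$ live in different Grassmannians, under the biholomorphism $\chi:Q_{1,1}\ra Q_2$ they are represented by the \emph{identical} pair of meromorphic functions, so Fujimoto's count transfers with equality rather than with any inequality.
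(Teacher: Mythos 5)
Your proposal is correct and follows essentially the same route as the paper: pass to the universal cover to reduce to the simply-connected case (noting the exceptional-value counts are preserved), then apply the dual immersion $\mb{x}^*$, whose metric dominates $ds^2$ and hence is complete, and invoke Fujimoto's Theorem \ref{be}. The extra remarks you add (positivity of $(d\td{s}^*)^2$ and the identification of the two Gauss maps through $\chi$) are accurate refinements of the same argument, not a different method.
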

	
\begin{proof}
Let $\td{M}$ be the universal covering space of $M$, then $\td{\psi}_i=\psi\circ \pi$ ($i\in \{1,2\}$) implies
the number of exception values of $\td{\psi}_i$ equals $q_i$. Thus, we can assume $M$ is simply connected,
without loss of generality. Let $\mb{x}^*:M\ra \R^4$ be the dual immersion of $\mb{x}$, then it gives a complete minimal surface
in $\R^4$ sharing the same W-data, and hence the conclusion on $q_1,q_2$ immediately follows from Theorem \ref{be}.
\end{proof}

\bigskip\bigskip
	
	\Section{On the Gauss map of degenerate stationary surfaces}{On the Gauss map of degenerate stationary surfaces}
	\label{S4}
	\begin{defi}
		Let $M$ be a space-like stationary surface in $\R^{3,1}$. If the Gauss image $G(M)$ of $M$ lies in a hyperplane $H_A\subset Q_{1,1}$
		with $[A]\in \Bbb{CP}^{2,1}$,
		then $M$ is called {\bf degenerate}. Moreover, $M$ is said to be a {\bf degenerate stationary surface of totally real (or hyperbolic, elliptic,
			parabolic) type} whenever $[A]$ is totally real (or hyperbolic, elliptic, parabolic). If the Gauss image of $M$ lies in the intersection of $k$ linear independent
		hyperplanes in $Q_{1,1}$, then we say $M$ is {\bf $\mb{k}$-degenerate}.
	\end{defi}
	
Let $M$ be a degenerate stationary surface, then $G(M)\subset H_A$ and
$G(M)\subset Q_{1,1}^+$ implies
\begin{equation}
G(M)\subset H_A^+.
\end{equation}
	Assume $[A]\sim [B]$ and let $\si\in SO^+(3,1)$ sending $[A]$ to $[B]$, then $\si(M)$ is congruent to $M$
	and its Gauss image lies in $\si(H_A^+)=H_{B}^+$. Therefore, it suffices for us to consider the representative hyperplanes
under the action of $SO^+(3,1)$.
Guided by Theorem \ref{class3}, we shall investigate all types of degenerate stationary surfaces case by case in the following text.
	\subsection{Degenerate stationary surfaces of totally-real type}\label{S4.1}
For the totally-real case, $[A]=[\mb{u}_0]$ implies $\lan \mb{x}-\mb{x}_0,\mb{u}_0\ran\equiv 0$ with $\mb{x}_0$ being the position vector of a fixed point in $M$,
and it follows that:
	\begin{pro}\label{t_r}
		For each degenerate stationary surface $M$ of totally real type whose Gauss image lies in $H_A$, we have:
		\begin{enumerate}
			\item [(1)] $[A]\sim [\mb{u}_S]$ if and only if $M$ is congruent to a maximal surface in $\R^{2,1}$.
			\item [(2)] $[A]\sim [\mb{u}_T]$ if and only if $M$ is congruent to a minimal surface in $\R^3$.
			\item [(3)] $[A]\sim [\mb{u}_N]$ if and only if $M$ is congruent to a zero mean curvature surface in $\R^{2,0}:=\{\mb{x}\in \R^{3,1}:x_3=x_4\}$
			endowed with the induced degenerate inner product.
		\end{enumerate}
	\end{pro}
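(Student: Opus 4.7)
The plan is to use the Weierstrass data directly: for each totally real representative $[A]=[\mb{u}_0]$ with $\mb{u}_0\in\R^{3,1}$, I would first show that the degeneracy condition $G(M)\subset H_A$ forces $M$ to lie in an affine hyperplane normal to $\mb{u}_0$, and then identify the induced geometry on that hyperplane. Concretely, $G(M)\subset H_A$ means $\lan\mb{u}_0,\mb{x}_z\ran\equiv 0$; taking complex conjugates (using that $\mb{u}_0$ is real) gives $\lan\mb{u}_0,\mb{x}_{\bar z}\ran\equiv 0$, hence $\lan\mb{u}_0,\mb{x}_u\ran=\lan\mb{u}_0,\mb{x}_v\ran\equiv 0$, so the real function $\lan\mb{u}_0,\mb{x}-\mb{x}_0\ran$ has vanishing differential and is identically zero.

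By Lemma \ref{sim} and Theorem \ref{class1}, after applying a suitable $\si\in SO^+(3,1)$ I can replace $\mb{u}_0$ by its canonical representative $\mb{u}_S$, $\mb{u}_T$, or $\mb{u}_N$, turning the constraint into $x_1\equiv c$, $x_4\equiv c$, or $x_3-x_4\equiv c$, respectively. A further translation in $\R^{3,1}$ (which lies in the Poincar\'e group and therefore preserves the stationarity and the Gauss image) sets $c=0$, so $M$ lies in $\{x_1=0\}\cong \R^{2,1}$, $\{x_4=0\}\cong \R^3$, or $\{x_3=x_4\}=:\R^{2,0}$ endowed with the restriction of the Minkowski product. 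In the first two cases the restricted product is nondegenerate of signature $(2,1)$ or $(3,0)$, and the condition that $M$ remains space-like plus $\mb{H}\equiv 0$ is precisely the definition of a maximal surface in $\R^{2,1}$ or a minimal surface in $\R^3$; the third case simply uses that the ambient mean curvature vector of $M$ vanishes and that the immersion has image in $\R^{2,0}$ (which is where the degenerate inner product comes in).

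For each converse direction I would run the argument in reverse: an isometric inclusion $\R^{2,1}\hookrightarrow\R^{3,1}$ as $\{x_1=0\}$ (respectively $\R^3\hookrightarrow\{x_4=0\}$, $\R^{2,0}\hookrightarrow\{x_3=x_4\}$) sends a maximal (minimal, zero mean curvature) surface to a space-like stationary surface, because the restriction of $\lan\cdot,\cdot\ran$ to these hyperplanes matches the intrinsic ambient form, and the mean curvature vector computed in $\R^{3,1}$ agrees with the one computed intrinsically up to the normal component in the $\mb{u}_0$-direction, which is tangent to the hyperplane condition and thus forced to vanish. Then tangent planes to $M$ are orthogonal to $\mb{u}_0$, so $G(M)\subset H_{\mb{u}_0}$, i.e.\ $[A]\sim[\mb{u}_0]$ up to the prescribed class.

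The part that requires the most care is case (3): because the inner product on $\R^{2,0}$ is degenerate (the null line $\R\cdot\mb{u}_N$ lies inside it), one has to check that a space-like stationary immersion into $\R^{3,1}$ whose image is in $\{x_3=x_4\}$ is equivalent to a ``zero mean curvature surface'' in $\R^{2,0}$ in a sensible way, rather than showing an intrinsic variational characterization; I would handle this by simply adopting the latter as the definition prescribed by the statement, and noting that the Weierstrass data (\ref{phi4}) with $\phi_3\equiv\phi_4$ automatically satisfy (\ref{phi1})–(\ref{phi2}) whenever $M$ is space-like, so that every minimal surface in $\R^3$ viewed in $\R^{3,1}$ via the obvious isometric embedding continues to be stationary and the null case is genuinely a new geometry distinct from cases (1) and (2).
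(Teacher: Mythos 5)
Your proposal is correct and follows essentially the same route as the paper: the paper's entire argument is the observation (stated in the sentence preceding the proposition) that $[A]=[\mb{u}_0]$ totally real forces $\lan \mb{x}-\mb{x}_0,\mb{u}_0\ran\equiv 0$, after which Theorem \ref{class1} reduces $\mb{u}_0$ to $\mb{u}_S$, $\mb{u}_T$ or $\mb{u}_N$ and the three hyperplane geometries are read off exactly as you do. Your additional care with the converses and with the degenerate inner product in case (3) is consistent with how the paper treats that case in Theorem \ref{t_r2}.
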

	
	The following theorem give characteristics for zero mean curvature surfaces in $\R^{2,0}$.
	\begin{thm}\label{t_r2}
		For each space-like stationary surface $M$ in $\R^{3,1}$, the following statements are equivalent:
		\begin{enumerate}
			\item [(a)] The Gauss image of $M$ lies in $H_A$ with $[A]\sim [\mb{u}_N]$.
			\item [(b)] The Gauss image of $M$ lies in $H_B$ with $[B]\in Q_{1,1}^+$.
			\item [(c)] Either $\psi_1$ or $\psi_2$ is a constant function on $M$.
			\item [(d)] The Gauss curvature $K$ of $M$ is $0$ everywhere.
			\item [(e)] $M$ is $2$-degenerate.
		\end{enumerate}
		
		Moreover, a complete surface satisfying the above conditions has to be the entire graph of $F:(x_1,x_2)\in \R^2\mapsto h(x_1,x_2)\mb{y}_0\in \R^{1,1}$,
		where $h$ is a harmonic function and $\mb{y}_0$ is a null vector. In this case, $\psi_i$ ($i=1$ or $2$) omits $1$ or $2$ points in $\C^*$ whenever $\psi_i$ is not constant.
		
	\end{thm}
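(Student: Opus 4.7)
The plan is to split the five equivalences into the analytic core (c)$\Leftrightarrow$(d) and the algebraic pieces (a)$\Leftrightarrow$(b)$\Leftrightarrow$(c)$\Leftrightarrow$(e), and then use Proposition \ref{t_r}(3) together with completeness to derive the graph structure and Picard bound.

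For (c)$\Leftrightarrow$(d), the direction (c)$\Rightarrow$(d) is immediate from formula (\ref{K}). The reverse is the delicate part: set $A:=\psi_1-\bar\psi_2$ (nowhere zero by space-likeness) and, on the open dense set where $\dot\psi_1$ is not identically zero, introduce the meromorphic function $\mu:=\dot\psi_2/\dot\psi_1$. Then $K\equiv 0$ reads $\mu A^2+\bar\mu\bar A^2\equiv 0$. Differentiating this in $z$ and using the identity itself to eliminate $\bar\mu\bar A$ yields
\[
\nu\;=\;-\f{2\mu}{\bar A}-\f{2}{A},\qquad \nu:=\dot\mu/\dot\psi_2.
\]
Since $\nu$ is meromorphic in $z$, imposing $\partial_{\bar z}\nu=0$ on the right-hand side produces $\mu\,\overline{\dot\psi_1}\,A^2=\overline{\dot\psi_2}\,\bar A^2$, equivalently the statement that $\overline{\dot\psi_1}\dot\psi_2\,A^2$ is \emph{real}. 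Combined with $K=0$, which says the same quantity is \emph{purely imaginary}, it must vanish; since $A\neq 0$, one of $\dot\psi_1,\dot\psi_2$ vanishes on an open set and hence identically on $M$ by holomorphicity, giving (c).

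The algebraic equivalences fall out of the hyperplane classification: by Proposition \ref{hp2}(2), both (a) (where $[A]\sim[\mb{u}_N]$ lies in $Q_{1,1}\backslash Q_{1,1}^+$, giving $c_2=\bar c_1$) and (b) (where $[B]\in Q_{1,1}^+$, giving $c_2\neq\bar c_1$) place $G(M)$ inside a set $\{w_1=c_1\}\cup\{w_2=c_2\}$. Connectedness of $M$ and analyticity of $\psi_1-c_1$ and $\psi_2-c_2$ then force one of these functions to vanish identically, giving (c). Conversely, if $\psi_1\equiv c_1$ (say), one recovers (a) by choosing $c_2=\bar c_1$ and (b) by choosing $c_2\neq\bar c_1$; equivalence with (e) follows from Corollary \ref{hp3}.

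For the graph statement, assume (a)--(e) hold; by Proposition \ref{t_r}(3) we may, after an $SO^+(3,1)$-congruence, assume $M\subset \R^{2,0}=\{x_3=x_4\}$, so $\mb{x}=(x_1,x_2,h,h)$ with $h$ harmonic and the induced metric equal to $|d\zeta|^2$ for $\zeta:=x_1+ix_2$. The space-like condition makes $\zeta:M\to\C$ a local biholomorphism, and completeness of $ds^2$ promotes it to a Riemannian covering, hence a biholomorphism by simple-connectedness of $\C$. Thus $M$ realizes the stated entire graph of $F:(x_1,x_2)\mapsto h(x_1,x_2)\mb{y}_0$ with $\mb{y}_0$ a null direction in $\R^{1,1}$, and on $M\cong\C$ a nonconstant $\psi_2$ is a meromorphic function omitting $\bar c_1$ (because $\psi_2\neq\bar\psi_1$); Picard's theorem for meromorphic functions on $\C$ then bounds the number of omitted values by two, giving $q_2\in\{1,2\}$. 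The hardest part of the whole proof is (d)$\Rightarrow$(c): unlike the $\R^4$ case, in which the positive-definite target metric immediately collapses $K\equiv 0$ to a constant Gauss map, here the $(2,2)$-signature of $g$ on $Q_{1,1}^+$ admits two totally null foliations $\{w_1=\text{const}\}$ and $\{w_2=\text{const}\}$, so the zeroth-order identity $K=0$ alone is insufficient and the differential constraint $\partial_{\bar z}\nu=0$ is what forces the real-vs-imaginary contradiction.
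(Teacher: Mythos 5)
Your proposal is correct, and its two substantive steps take genuinely different routes from the paper's. For $(d)\Rightarrow(c)$ the paper first applies a Lorentz transformation so that $\psi_1(p)=0$, $\psi_2(p)=i$, writes $\psi_2=f\circ\psi_1$ near $p$, and kills the Taylor coefficients of $f$ one at a time by induction from the pointwise identity $\text{Re}[f'(z)(z-\bar{f}(z))^2]=0$; you instead differentiate the curvature identity once in $z$ and once in $\bar{z}$, and the meromorphy of $\nu=\dot\mu/\dot\psi_2$ supplies the second constraint that $\overline{\dot\psi_1}\dot\psi_2(\psi_1-\bar\psi_2)^2$ is real, which against $K\equiv 0$ (the same quantity purely imaginary) forces it to vanish — I checked the computation and it is airtight. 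Your version is global, needs no normalization and no induction; the only points to watch are the degenerate case $\phi_3+\phi_4\equiv 0$, where (\ref{K}) is not directly available but one of $\psi_1,\psi_2$ is then the constant $\infty$ so $(c)$ is automatic, and the zero sets of $\dot\psi_1,\dot\psi_2$, which you correctly dispose of by working on the open dense set where both are nonzero. For the graph statement the paper passes to the universal cover, invokes Lemma 9.6 of \cite{O-Survey} to force $\td\phi_1$ to be a constant multiple of $dz$, and integrates; you instead note that the projection onto the $(x_1,x_2)$-plane is a local isometry onto Euclidean $\R^2$, so completeness makes it a Riemannian covering and simple connectivity of $\R^2$ makes it a global diffeomorphism — shorter, and free of the function-theoretic lemma (only note that $\zeta=x_1+ix_2$ may be anti-holomorphic rather than holomorphic according to which of $\phi_2=\pm i\phi_1$ occurs, which does not affect the covering argument). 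The remaining equivalences are handled essentially as in the paper, via Proposition \ref{hp2}, Corollary \ref{hp3} and the identity theorem, and the concluding Picard count is the same.
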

	
	\begin{proof}
		$(a)\Rightarrow (b)$ can be got by an easy deduction based on (\ref{phi1}). $(b)\Rightarrow (c)$ immediately follows from Proposition \ref{hp2} and
		the holomorphicity of the Gauss map. $(c)\Rightarrow (d)$ is a direct corollary of (\ref{K}). $(e)\Rightarrow (a)$ can
		be directly deduced from Corollary \ref{hp3}. It remains to show $(d)\Rightarrow (e)$.
		
		Assume $M$ is a space-like stationary surface satisfying $K\equiv 0$. If the height differential
		$dh\equiv 0$, then (\ref{phi6}) means
		the Gauss image of $M$ lies in $H_A\cap H_B$ with $[A]=[(0,0,1,-1)]$, $[B]=[(1,-i,0,0)]$ or $[(1,i,0,0)]$
		and hence $M$ is $2$-degenerate. Otherwise, the curvature formula is
		given as (\ref{K}) in term of the W-data $(\psi_1,\psi_2,dh)$. If $\psi_1\equiv c_1$, Corollary \ref{hp3} enables us to find $[A],[B]\in Q_{1,1}$, so that
		the Gauss image of $M$ lies in $H_A\cap H_B$. Otherwise, the zeros of $\psi'_1$ are discrete. Then we can find
		a neighborhood $U$ of $p\in M$, such that the restriction of $\psi_1$ on $U$ gives a biholomorphism between $U$
		and a neighborhood $\Om$ of $\psi_1(p)\in \C^*$. Observing that $\psi_2\neq \bar{\psi}_1$, we can choose a M\"{o}bius transformation
		$\mc{M}_T$, such that $\mc{M}_T(\psi_1(p))=0$, $\mc{M}_{\bar{T}}(\psi_2(p))=i$; Let $\si$ be the corresponding Lorentz transform (see \S \ref{conj}), then the Gauss image
		of $\si(p)\in \si(M)$ is $\Psi^{-1}(0,i)$; Therefore, we can assume $\psi_1(p)=0$, $\psi_2(p)=i$ without loss of generality, and
		$\psi_2$ can be written as $f\circ \psi_1$ on $U$, with a holomorphic function $f$ on $\Om$ satisfying $f(0)=i$.
		Applying the Leibniz chain rule, we have $\psi'_2=f'(\psi_1)\psi'_1$, and then substituting it into (\ref{K}) implies
		\begin{equation}\label{f1}
		\text{Re}[f'(z)(z-\bar{f}(z))^2]=0\qquad \text{on }\Om.
		\end{equation}
		
		Denote
		\begin{equation}
		f(z)=i+a_1z+a_2z^2+\cdots+a_kz^k+\cdots,
		\end{equation}
		then
		\begin{equation*}\aligned
		&f'(z)(z-\bar{f}(z))^2\\
		=&(a_1+2a_2z+\cdots)(i+z-\bar{a}_1\bar{z}-\cdots)^2\\
		=&-a_1+((2ia_1-2a_2)z-2i|a_1|^2\bar{z})+(-2|a_1|^2-4ia_2\bar{a}_1)z\bar{z}+\cdots
		\endaligned
		\end{equation*}
		and (\ref{f1}) forces
		$$\left\{\begin{array}{c}
		a_1\in i\R\\
		2ia_1-2a_2=-2i|a_1|^2\\
		-2|a_1|^2-4ia_2\bar{a}_1\in i\R
		\end{array}\right.$$
		Through a straightforward calculation we get $a_1=0$. Next, under the inductive assumption $a_1=\cdots=a_{k-1}=0$
		for any $k\geq 2$, we have
		\begin{equation*}\aligned
		&f'(z)(z-\bar{f}(z))^2\\
		=&(ka_kz^{k-1}+\cdots)(i+z-\bar{a}_k\bar{z}^k-\cdots)^2\\
		=&(-ka_kz^{k-1}+0\cdot \bar{z}^{k-1})+\cdots
		\endaligned
		\end{equation*}
		and hence $a_k=0$. Therefore $f\equiv i$ and $\psi_2$ has to be a constant function. Then we can proceed as above to
		conclude that $M$ has to be $2$-degenerate. This completes the proof of $(d)\Rightarrow (e)$.
		
		Let $\mb{x}:M\ra \R^{3,1}$ be a complete space-like stationary surface, satisfying $x_3\equiv x_4$. Since $K\equiv 0$, the universal covering space $\td{M}$
		of $M$ is conformally equivalent to $\C$.
		 $\td{\phi}_1^2+\td{\phi}_2^2+\td{\phi}_3^2-\td{\phi}_4^2=0$
		along with $\td{x}_3=\td{x}_4$ forces $\td{\phi_3}=\td{\phi}_4$ and
		$\td{\phi}_2=i\td{\phi}_1$ or $-i\td{\phi}_1$. Denote $\td{\phi}_i=f_i dz$ with an entire function $f_i$ for each
$1\leq i\leq 4$, then
		$$d\td{s}^2=|\td{\phi}_1|^2+|\td{\phi}_2|^2+|\td{\phi}_3|^2-|\td{\phi}_4|^2=2|f_1|^2 |dz|^2$$
		and hence $f_1$ has no zero. The completeness of $M$ implies
		$$L(\g)=\int_\g d\td{s}=\sqrt{2}\int_\g |f_1|dz=+\infty$$
		for each divergent path $\g:[0,+\infty)\ra \td{M}$,
		and then Lemma 9.6 of \cite{O-Survey} tells us $f_1$ is either constant or has a pole at $\infty$, where the latter forces the existence of a zero of $f_1$ and causes a contradiction. Therefore $f_1\equiv c$ with $c\neq 0$. Let $w:=2cz$, then
\begin{equation}\label{phi7}
\td{\phi}=\f{1}{2}[(1,\pm i,f,f)]dw
\end{equation}
 with $f$ an entire function.
		Integrating both sides of (\ref{phi7}) shows the image of $\mb{\td{x}}$ forms the entire graph of $F:(x_1,x_2)\mapsto h(x_1,x_2)\mb{y}_0$
		with $h$ a harmonic function, and $M=\td{M}$ follows from the injectivity of $\mb{\td{x}}$. Finally, (\ref{phi7}) also implies
the non-constant one of $\psi_1,\psi_2$ equals $f^{-1}$, which take each values of $\C^*$ with the exception of $1$ or $2$ points, due to Picard's theorem.
		
	\end{proof}

\noindent{\bf Remark.} By Theorem \ref{t_r2}, a space-like stationary surface $\mb{x}: M\ra \R^{3,1}$ is flat if and only if the dual immersion $\mb{x}^*: M\ra \R^4$
gives a 2-degenerate minimal surfaces in $\R^4$, i.e.  a complex curve in $\C^2$ (see Proposition 4.6 of \cite{H-O}). Moreover, the completeness
of such surface implies $\psi_i^*$ omits at most 3 points whenever it is nonconstant (see
Theorem \ref{be}).
This result is optimal since we can construct the following
examples of complete minimal surfaces, such that the numbers of exceptional values of $\psi_1^*$ are 0,1,2,3, respectively:
\begin{itemize}
\item $\mb{x}_0^*:\C\ra \R^4$ given by the W-data $\psi_1^*=\f{z^2-1}{z}$, $\psi_2^*\equiv 0$ and $dh=zdz$;
\item $\mb{x}_1^*:\C\ra \R^4$ given by $\psi_1^*=\f{1}{z}$, $\psi_2^*\equiv 0$ and $dh=zdz$;
\item $\mb{x}_2^*:\C\ra \R^4$ given by $\psi_1^*=e^{-z}$, $\psi_2^*\equiv 0$ and $dh=e^z dz$;
\item $\mb{x}_3^*:\Bbb{D}\ra \R^4$ given by $\psi_1^*=f(z)$, $\psi_2^*\equiv 0$ and
$dh=\f{f'(z)}{f(z)(f(z)-1)}dz$, with $f$ a holomorphic covering map from the unit disc $\Bbb{D}$
onto $\C\backslash \{0,1\}$.
\end{itemize}
For $i=1$ or $2$, the dual immersion $\mb{x}_i:\C\ra \R^{3,1}$ of $\mb{x}_i^*$ gives a complete space-like
stationary surface. On the other hand, $\mb{x}_0:\C\ra \R^{3,1}$ cannot be a space-like surface since $\psi_2\neq \bar{\psi}_1$ does not hold everywhere;
 the metric induced by $\mb{x}_3:\Bbb{D}\ra \R^{3,1}$ cannot be complete, due to the flatness.
	
	\subsection{Degenerate stationary surfaces of hyperbolic type}\label{graph2}
	
	
For the hyperbolic case, as shown in Theorem \ref{class1}, $[A]\sim [(\tanh u,i,0,0)]$ with $u\in (0,+\infty]$. Here $[A]\in Q_{1,1}^+$ if and only if $u=+\infty$ and the corresponding
surface $M$ is 2-degenerate, which has been considered in \S \ref{S4.1}. Thereby, we assume $u\in (0,+\infty)$ in this subsection and call $u$ the {\it hyperbolic argument}
	of $M$.
	
	By Theorem \ref{class3}, there exists $S\stackrel{conj}{\sim}\begin{pmatrix}
	e^u & 0\\
	0 & e^{-u}
	\end{pmatrix}$, such that $\Psi(H_A)$ ($\Psi(H_A^+)$) is the graph of $\mc{M}_S$
over $\C^*$ ($\C^*\backslash E_S$),
	where $E_S$ consists of exact 2 points corresponding
	to the 2 conjugate eigenvectors of $S$ that are $\C$-linear independent (see Proposition \ref{conj2}).
Without loss of generality we can assume
	\begin{equation}
	S=\begin{pmatrix}
	e^u & 0\\
	0 & e^{-u}
	\end{pmatrix}
	\end{equation}
	then
\begin{equation}
\mc{M}_S(w)=e^{2u}w,\quad E_S=\{0,\infty\}.
\end{equation}
	Let
	\begin{equation}
	\psi:=\psi_1,
	\end{equation}
	then
$\psi$ is
	a holomorphic function with no zero, $\psi_2=e^{2u}\psi$, and (\ref{phi4}) and (\ref{phi5}) become
	\begin{equation}
	\phi=((1+e^{2u})\psi,-i(1-e^{2u})\psi,1-e^{2u}\psi^2,1+e^{2u}\psi^2)dh,
	\end{equation}
	\begin{equation}\label{metric1}
	ds^2=2|\bar{\psi}-e^{2u}\psi|^2|dh|^2.
	\end{equation}
	Thereby, we get the Weierstrass representation for degenerate stationary surfaces of hyperbolic type as follows:
	
	\begin{thm}\label{W1}
		Given a holomorphic 1-form $dh$ and a holomorphic function $\psi$ globally defined on a Riemann surface $M$, if
		\begin{itemize}
			\item $dh$ and $\psi$ has no zero,
			\item $\oint_\g \psi dh=0$ and $\text{Re}\oint_\g dh=\text{Re}\oint_\g \psi^2 dh=0$ for each closed path
			in $M$,
		\end{itemize}
		then
		\begin{equation}
		\mb{x}:=\text{Re}\int ((1+e^{2u})\psi,-i(1-e^{2u})\psi,1-e^{2u}\psi^2,1+e^{2u}\psi^2)dh
		\end{equation}
		defines a degenerate stationary surface with the hyperbolic argument $u\in (0,+\infty)$. Conversely, all nonflat degenerate stationary
		surfaces of hyperbolic type can be expressed in this form.
		
	\end{thm}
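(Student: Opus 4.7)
I prove the two directions of the Weierstrass correspondence separately, then flag the main subtlety.

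\emph{Forward direction.} Given $(\psi, dh, u)$ satisfying the hypotheses, define
\begin{equation*}
\phi := \bigl((1+e^{2u})\psi,\, -i(1-e^{2u})\psi,\, 1-e^{2u}\psi^2,\, 1+e^{2u}\psi^2\bigr)\,dh,
\end{equation*}
and check the Weierstrass conditions (\ref{phi1})--(\ref{phi3}). The null identity (\ref{phi1}) follows at once from $(1+e^{2u})^2 - (1-e^{2u})^2 = 4e^{2u}$, yielding the algebraic cancellation with the $\phi_3^2-\phi_4^2$ contribution. For the positivity (\ref{phi2}), a direct computation gives the induced metric $ds^2 = 2|\bar{\psi}-e^{2u}\psi|^2|dh|^2$; the equality $\bar{\psi}(p) = e^{2u}\psi(p)$ would force $|\psi(p)| = e^{2u}|\psi(p)|$ and hence $\psi(p)=0$, contradicting the zero-free hypothesis on $\psi$. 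The period condition (\ref{phi3}) decouples into $\oint_\gamma \psi\, dh = 0$ (controlling $\phi_1,\phi_2$), $\text{Re}\oint_\gamma dh = 0$ (half of $\phi_3+\phi_4$), and $\text{Re}\oint_\gamma \psi^2\, dh = 0$ (half of $\phi_4-\phi_3$), all assumed. Hence $\mathbf{x}$ is a well-defined space-like stationary immersion. Comparing $\phi$ with (\ref{phi4}) identifies $(\psi_1, \psi_2) = (\psi, e^{2u}\psi)$, so the Gauss image lies in the graph of $\mathcal{M}_S$ with $S = \mathrm{diag}(e^u, e^{-u})$; by Theorem \ref{class3}(b) this is the hyperplane $H_A$ with $[A]\sim[(\tanh u, i, 0, 0)]$, marking $M$ as degenerate hyperbolic of argument $u$ via Theorem \ref{class1}.

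\emph{Converse direction.} Let $M$ be a nonflat degenerate stationary surface of hyperbolic type with argument $u \in (0, +\infty)$. By Proposition \ref{conj3} and Theorem \ref{class3}(b), after a Lorentz transformation I may assume $\psi_2 = e^{2u}\psi_1$; set $\psi := \psi_1$ and let $dh$ be the height differential. The regularity of $(\psi, dh)$ is forced by the W-data dictionary from Section \ref{S3}: a pole of $\psi$ would simultaneously produce poles of $\psi_1$ and $\psi_2$ at the same point, contradicting the "unique index" clause for zeros of $dh$; with $\psi$ entire, no zero of $dh$ can survive either; and a zero of $\psi$ would yield $\psi_2(p)=0=\bar{\psi}_1(p)$, violating the space-like condition $\psi_2\neq\bar{\psi}_1$. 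Nonconstancy of $\psi$ follows from the nonflat hypothesis together with Theorem \ref{t_r2}, which identifies $K\equiv 0$ with the constancy of some $\psi_i$.

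\emph{Period translation and main subtlety.} Substituting $\psi_2 = e^{2u}\psi$ into the three W-data period conditions, the constraints $\text{Re}\oint_\gamma dh = 0$ and $\text{Re}\oint_\gamma \psi^2\, dh = 0$ transfer directly. The remaining condition $\oint_\gamma \psi_1\, dh = -\overline{\oint_\gamma \psi_2\, dh}$ becomes $z = -e^{2u}\bar{z}$ for $z := \oint_\gamma \psi\, dh$; splitting into real and imaginary parts and using that $1\pm e^{2u}\neq 0$ (since $u>0$) forces $z=0$. The main subtlety worth monitoring throughout the converse is the case distinction between the genuine hyperbolic range $u\in(0,+\infty)$ (where $[A]\notin Q_{1,1}$ and the M\"obius graph description applies) and the boundary value $u=+\infty$ (where $[A]\in Q_{1,1}^+$ and $M$ becomes $2$-degenerate, handled separately via Theorem \ref{t_r2}); only the finite case matches the normal form $S=\mathrm{diag}(e^u,e^{-u})$ driving the entire argument.
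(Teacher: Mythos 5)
Your proposal is correct and follows essentially the same route as the paper: the paper obtains Theorem \ref{W1} by using Theorem \ref{class3} to normalize $S=\mathrm{diag}(e^u,e^{-u})$, substituting $\psi_2=e^{2u}\psi_1$ into the general Weierstrass formulas (\ref{phi4})--(\ref{phi5}), and translating the W-data constraints, which is exactly your argument. Your explicit verifications (the null identity, the positivity of $2|\bar\psi-e^{2u}\psi|^2|dh|^2$ via $e^{2u}\neq 1$, the regularity of $(\psi,dh)$ from the pole/zero dictionary, and the period reduction $z=-e^{2u}\bar z\Rightarrow z=0$) are precisely the details the paper leaves implicit, and they are all sound.
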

	
	Observing that the conditions in Theorem \ref{W1} is independent of $u$, each degenerate stationary surface
	with the W-data $\psi,dh$ and the hyperbolic argument $u_0\in (0,+\infty)$ can be deformed to a family
	of degenerate stationary surfaces $\{M_u:u\in (0,+\infty)\}$ of hyperbolic type, keeping $\psi$ and $dh$
	invariant. Moreover, let
	\begin{equation}\label{metric3}
	d\hat{s}^2:=|\psi|^2|dh|^2
	\end{equation}
	be a Riemannian metric independent of $u$, then $d\hat{s}^2$ is complete if and only if $ds^2$ is complete, since
	\begin{equation}
	(e^{2u}-1)|\psi|\leq |\bar{\psi}-e^{2u}\psi|\leq (e^{2u}+1)|\psi|.
	\end{equation}
	Thus the deformation preserves the completeness property.

	As shown in \cite{M-W-Y}, each entire space-like stationary graph $M$ of $F:\R^2\ra \R^{1,1}$ has to be a complete degenerate stationary surface of hyperbolic type.
	More precisely, given such graph, there exists a non-singular transform
	\begin{equation}
	\aligned
	x_1&=u,\\
	x_2&=au+bv\quad (b>0),
	\endaligned
	\end{equation}
	such that $(u,v)$ are global isothermal parameters of $M$; Letting $z:=u+iv$
	we have $\phi_1=\f{1}{2}dz$, $\phi_2=\f{c}{2}dz$ with $c:=a-ib$, and hence
	the Gauss image of $M$ lies in $H_A$ with $A=[(c,-1,0,0)]$,
	and $[A]\notin Q_{1,1}^+$ if and only if $c\neq \pm i$. Moreover, a straightforward computation as in \cite{M-W-Y} gives
	$$\phi=(\f{1}{2},\f{c}{2},\mu\cosh \be,\mu\sinh\be)dz,$$
	where $\mu^2=-\f{1+c^2}{4}$ and $\be$ is an entire function,
	$$ds^2=\lan \phi,\bar{\phi}\ran\geq \f{1+|c|^2-|1+c^2|}{4}|dz|^2$$
	is a complete metric, and
	\begin{equation}
	\psi_1=\f{\phi_1+i\phi_2}{\phi_3+\phi_4}=\f{1+ic}{2\mu}e^{-\be}
	\end{equation}
	takes each value of $\C\backslash \{0\}$ for infinitely times, unless $M$ is an affine
	space-like plane. Therefore, $\int_M K dA_M=\int_M G^* dA_g$ (where $A_g$ is the metric form on $H_A^+$) should be divergent provided that $M$
	is non-flat. This is an alternative proof of Theorem 6.1 in \cite{M-W-Y}.
	
	Besides the above examples, can we construct another complete degenerate stationary surfaces of hyperbolic type?
	The answer is 'No':
	
	\begin{thm}
		\label{grph}
$M\subset \R^{3,1}$ is a complete degenerate stationary surface of hyperbolic type
if and only if it is congruent to the entire space-like stationary graph of $F:\R^2\ra \R^{1,1}$.
		Moreover, if $M$ is non-flat, then
		$\psi_i$ ($i=1$ or $2$) takes each point in $\C^*$ for infinitely times, with the exception of exactly 2 points, and the total Gauss curvature of $M$
		is $\infty$.
	\end{thm}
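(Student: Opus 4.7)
The plan is to split the equivalence into its two directions. The backward implication is already established in the paragraph preceding the theorem, citing \cite{M-W-Y}: any entire space-like stationary graph of $F:\R^2\to\R^{1,1}$ admits global isothermal parameters making it a complete stationary surface whose Gauss image lies in a hyperplane of hyperbolic type. So the work is in the forward direction. I would start by applying Theorem \ref{W1} to obtain nowhere-zero W-data $\psi$ and $dh$ for $M$, with hyperbolic argument $u\in(0,+\infty)$, and then lift them through the universal cover $\pi:\td M\to M$. The pulled-back data $\td\psi,d\td h$ remain nowhere zero, and the deformation-invariant metric $d\hat s^2=|\td\psi|^2|d\td h|^2$ is complete on $\td M$ because it is equivalent to the complete lifted metric $\pi^*ds^2$ (cf.\ (\ref{metric1}) and (\ref{metric3})).

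The heart of the argument is to identify $\td M$ with $\C$ and read off the graph structure. The form $\td\eta:=\td\psi\,d\td h$ is a nowhere-zero holomorphic $1$-form on the simply-connected $\td M$, so $\td\eta=dw$ for some entire $w:\td M\to\C$, and $w$ is a local isometry from $(\td M,|\td\eta|^2)$ onto $(\C,|dw|^2)$; completeness of the domain upgrades $w$ to a Riemannian covering, and simple-connectedness of $\C$ makes it a biholomorphism $\td M\xrightarrow{\sim}\C$. In this global coordinate the formulas of Theorem \ref{W1} give $\phi_1=(1+e^{2u})\,dw$ and $\phi_2=-i(1-e^{2u})\,dw$, whence $(x_1,x_2)=\bigl((1+e^{2u})\Re w,\,(1-e^{2u})\Im w\bigr)$ up to a translation. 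For $u\in(0,+\infty)$ both coefficients are nonzero, so this is an $\R$-affine isomorphism $\C\to\R^2$; its injectivity forces $\pi$ itself to be injective, hence $M=\td M$ is an entire graph of some $F:\R^2\to\R^{1,1}$. I expect this completeness-to-$\C$ step to be the main obstacle, since it is where one must rule out $\td M\simeq\Bbb D$ and produce a genuine global coordinate from the nowhere-zero $1$-form and its complete flat metric.

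For the ``moreover'' part, non-flatness of $M$ forces $\psi$ to be non-constant, so being zero-free entire we write $\psi=e^g$ with $g$ non-constant entire. By Picard's Little Theorem $g$ omits at most one value of $\C$, but $\exp^{-1}(c)$ is infinite for every $c\in\C^*$, so $g$ attains infinitely many elements of $\exp^{-1}(c)$ and hence $\psi$ attains $c$ at infinitely many points; thus $\psi_1=\psi$ and $\psi_2=e^{2u}\psi$ each cover $\C^*$ infinitely often with the only exceptional points being $\{0,\infty\}=E_S$. Finally, since $G:M\to H_A^+$ is holomorphic with $G^*g=-K\,ds^2$, and the first factor of $\Psi\circ G$ is $\psi_1$ which covers $H_A^+$ with infinite multiplicity almost everywhere, the area formula gives $\int_M|K|\,dA_M=\int_M G^*dA_g\ge\operatorname{Area}(H_A^+)=+\infty$ by Case III of \S\ref{metric4}, completing the argument.
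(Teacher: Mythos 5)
Your proposal is correct, and its overall skeleton matches the paper's: the backward implication is deferred to the computation preceding the theorem (following \cite{M-W-Y}), the forward direction passes to the universal cover and exploits completeness of the deformation-invariant metric $d\hat{s}^2=|\psi|^2|dh|^2$, and the ``moreover'' clause is handled by Picard's theorem together with the divergence of $\int_{H^+}dA_g$ from Case III of \S\ref{metric4}. The genuine difference lies in how the central step is executed. The paper first rules out $\td{M}\cong\Bbb{D}$ via Osserman's Lemma 8.5 (a divergent path of finite length for $|\td{\psi}f|\,|dz|$), and then applies Osserman's Lemma 9.6 on $\C$ to force $\td{\psi}f\equiv\mathrm{const}$, so that $\td{\phi}_1,\td{\phi}_2$ become constant multiples of $dz$. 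You instead integrate the nowhere-zero holomorphic form $\td{\eta}=\td{\psi}\,d\td{h}$ to a primitive $w$, note that $w$ is a local isometry from the complete surface $(\td{M},d\hat{s}^2)$ onto flat $\C$, and invoke the standard theorem that a local isometry with complete domain is a covering map, hence (by simple connectivity of $\C$) a biholomorphism. This one argument replaces both of Osserman's lemmas and produces directly the global coordinate in which $\td{\phi}_1=(1+e^{2u})\,dw$ and $\td{\phi}_2=-i(1-e^{2u})\,dw$; it is self-contained and arguably cleaner, at the price of quoting the (equally standard) covering theorem in place of the value-distribution lemmas. Everything downstream is sound and parallels the paper: the nonsingular linear projection onto the $(x_1,x_2)$-plane gives injectivity of $\td{\mb{x}}$, hence $M=\td{M}$ and the entire-graph structure; writing $\psi=e^{g}$ and applying Picard to $g$ shows $\psi_i$ attains every value of $\C\setminus\{0\}$ infinitely often, the exceptional set being exactly $E_S=\{0,\infty\}$; and surjectivity of the Gauss map onto $H_A^+$ combined with $G^*g=-K\,ds^2$ yields $\int_M|K|\,dA_M\ge\int_{H_A^+}dA_g=+\infty$ (with $dA_g$ understood as the unsigned area form, a convention the paper also uses implicitly).
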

\noindent {\bf Remark. }Here, 'complete degenerate stationary surfaces of hyperbolic type' include 2-degenerate ones (i.e. $[A]\in Q_{1,1}^+$), which are just the entire graphs of harmonic functions over $\R^2$, as shown by Theorem \ref{t_r2}. So this case will not be mentioned in the following proof.

	\begin{proof}
Let $\pi:\td{M}\ra M$ be the universal covering map,
then the completeness of $d\hat{s}^2$ given in (\ref{metric3}) implies
\begin{equation}
\pi^* d\hat{s}^2=|\td{\psi}|^2 |d\td{h}^2
\end{equation}
is a complete metric on $\td{M}$, where $\td{\psi}:=\psi\circ \pi$, $d\td{h}:=\pi^* dh$ constitute the W-data of $\td{M}$.

		If $\td{M}$ is conformally equivalent to the unit disk $\Bbb{D}$, we can write $d\td{h}=fdz$, where $f$ is a holomorphic function
		on $\Bbb{D}$ with no zero. By Lemma 8.5 of \cite{O-Survey}, there is a divergent path $\g$ in $\Bbb{D}$, such that
		\begin{equation}
		\int_\g \pi^*d\hat{s}=\int_\g |\td{\psi} f||dz|<+\infty,
		\end{equation}
		which contradicts to the completeness of $\pi^*d\hat{s}^2$. Thus $\td{M}$ should be conformally equivalent to $\C$. Again using Lemma 9.6
		of \cite{O-Survey}, we can proceed as in the proof of Theorem \ref{t_r2} to conclude that $\td{\psi} f\equiv \text{const}$ and hence
		$\td{\phi}_1=c_1dz$, $\td{\phi}_2=c_2dz$ with $c_1,c_2$ being both complex constants. Therefore, $\td{\mb{x}}:\td{M}\ra \R^{3,1}$ gives an entire stationary graph of a function from $\R^2$ into $\R^{1,1}$, and the injectivity of $\td{\mb{x}}$ implies $M=\td{M}$.
		
	\end{proof}

	\subsection{Degenerate stationary surfaces of elliptic type}\label{elliptic}
	
	
 For the elliptic case,
  Theorem \ref{class1} tells us
  $[A]\sim [(0,0,1,i\tan\a)]$, where $\a\in (0,\f{\pi}{2})$ is called the {\it elliptic argument}
	of $M$.
	
	As shown in Theorem \ref{class3}, there exists $S\stackrel{conj}{\sim} \begin{pmatrix}
	0 & ie^{-i\a}\\
	ie^{i\a} & 0
	\end{pmatrix}$, such that $\Psi(H_A)$ is a graph of $\mc{M}_S$ over $\C^*$, and $H_A^+=H_A$. Without loss of generality we can assume
	\begin{equation}
	S=\begin{pmatrix}
	0 & ie^{-i\a}\\
	ie^{i\a} & 0
	\end{pmatrix}
	\end{equation}
	then
\begin{equation}
\mc{M}_S(w)=e^{-2i\a}w^{-1}.
\end{equation}
	Let
	\begin{equation}
	\psi:=\psi_1,
	\end{equation}
	then $\psi$ is
	a meromorphic function, $\psi_2=e^{-2i\a}\psi^{-1}$, and (\ref{phi4}) and (\ref{phi5}) become
	\begin{equation}\aligned
	\phi&=(\psi+e^{-2i\a}\psi^{-1},-i(\psi-e^{-2i\a}\psi^{-1}),1-e^{-2i\a},1+e^{-2i\a})dh\\
	&=(e^{i\a}\psi+e^{-i\a}\psi^{-1},-i(e^{i\a}\psi-e^{-i\a}\psi^{-1}),2i\sin\a,2\cos\a)\om
	\endaligned
	\end{equation}
	with $\om:=e^{-i\a}dh$ and
	\begin{equation}\label{metric2}
	ds^2=2|\bar{\psi}-e^{-2i\a}\psi^{-1}|^2|dh|^2=2\Big[|\psi|^2+|\psi|^{-2}-2\cos(2\a)\Big]|\om|^2,
	\end{equation}
	respectively. This enables us to obtain the Weierstrass representation for degenerate stationary surfaces of elliptic type:
	
	\begin{thm}\label{W3}
		Given a holomorphic 1-form $\om$ and a meromorphic function $\psi$ globally defined on a Riemann surface $M$, if
		\begin{itemize}
			\item $p$ is a zero of $\om$ if and only if $p$ is a zero or a pole of $\psi$, with the same order;
			\item $\oint_\g \psi \om=-\overline{\oint_\g \psi^{-1}\om}$ and $\oint_\g \om=0$ for each closed path
			in $M$,
		\end{itemize}
		then
		\begin{equation}\label{W2}
		\mb{x}:=\text{Re}\int (e^{i\a}\psi+e^{-i\a}\psi^{-1},-i(e^{i\a}\psi-e^{-i\a}\psi^{-1}),2i\sin\a,2\cos\a)\om
		\end{equation}
		defines a degenerate stationary surface with the elliptic argument $\a\in (0,\f{\pi}{2})$. Conversely, all degenerate stationary
		surfaces of elliptic type can be expressed in this form.
		
	\end{thm}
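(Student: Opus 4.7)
The plan is to mimic the structure of the proof of Theorem \ref{W1} (hyperbolic case) but track the additional zero/pole behaviour caused by the $\psi^{-1}$ term, which is the main technical wrinkle in the elliptic setting.

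For the forward implication, I would start with the data $(\psi,\omega,\alpha)$ and define the four 1-forms
\begin{equation*}
\phi_1:=(e^{i\alpha}\psi+e^{-i\alpha}\psi^{-1})\omega,\ \phi_2:=-i(e^{i\alpha}\psi-e^{-i\alpha}\psi^{-1})\omega,\ \phi_3:=2i\sin\alpha\,\omega,\ \phi_4:=2\cos\alpha\,\omega.
\end{equation*}
The first step is to verify that each $\phi_k$ is a genuine holomorphic 1-form on $M$: the matching of zeros of $\omega$ with zeros/poles of $\psi$ of the same order ensures that the apparent poles of $\psi^{-1}\omega$ are cancelled, and the computation is local. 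Next, an elementary algebraic identity gives $\phi_1^2+\phi_2^2+\phi_3^2-\phi_4^2=0$, which is (\ref{phi1}). The space-like condition (\ref{phi2}) follows from the formula
\begin{equation*}
|\phi_1|^2+|\phi_2|^2+|\phi_3|^2-|\phi_4|^2=2\bigl(|\psi|^2+|\psi|^{-2}-2\cos(2\alpha)\bigr)|\omega|^2,
\end{equation*}
whose right-hand side is strictly positive since $\alpha\in(0,\pi/2)$ implies $2\cos(2\alpha)<2$ while $|\psi|^2+|\psi|^{-2}\ge 2$, with equality only when $|\psi|=1$ (still $>2\cos(2\alpha)$). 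The period hypothesis translates directly into (\ref{phi3}) for $\phi_1,\ldots,\phi_4$, so $\mathbf{x}$ of (\ref{W2}) defines a generalized space-like stationary surface. Finally, reading off $(\psi_1,\psi_2)=\Psi([\mathbf{x}_z])$ via (\ref{w12}), one obtains $\psi_1=\psi$ and $\psi_2=e^{-2i\alpha}\psi^{-1}$, so $\mathcal{M}_S(\psi_1)=\psi_2$ with the $S$ fixed in the paragraphs preceding the theorem, i.e.\ the Gauss image sits in the hyperplane $H_A$ with $[A]\sim[(0,0,1,i\tan\alpha)]$, as required.

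For the converse, let $M$ be a degenerate stationary surface of elliptic type. By Theorem \ref{class3} and the $SO^+(3,1)$-invariance of the statement, I may assume the Gauss image lies in $H_A$ for the representative $S=\bigl(\begin{smallmatrix}0 & ie^{-i\alpha}\\ ie^{i\alpha} & 0\end{smallmatrix}\bigr)$, so $\psi_2=e^{-2i\alpha}\psi_1^{-1}$ for the W-data $(\psi_1,\psi_2,dh)$ of $M$. Setting $\psi:=\psi_1$ and $\omega:=e^{-i\alpha}dh$, the general Weierstrass formulas (\ref{phi4})--(\ref{phi5}) collapse to exactly (\ref{W2}) and the metric formula (\ref{metric2}). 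The three hypotheses of the theorem (zero/pole matching between $\omega$ and $\psi$, and the two period identities) are now just restatements of the standard W-data conditions recalled from \cite{M-W-W}; in particular the matching of zeros/poles comes from the requirement that $p$ is a zero of $dh$ iff it is a pole of a unique $\psi_i$ of the same order, and the fact that $\psi_2=e^{-2i\alpha}\psi_1^{-1}$ means zeros of $\psi_1$ are poles of $\psi_2$ and vice versa.

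The only place where any genuine thought is required is the first step of the forward direction: confirming that zeros of $\omega$ coincide (with the same multiplicity) with zeros or poles of $\psi$ exactly ensures that both $\psi\omega$ and $\psi^{-1}\omega$, hence all four $\phi_k$, extend to holomorphic 1-forms on $M$. Everything else is either a direct algebraic identity or a transcription of conditions already identified in the general Weierstrass framework.
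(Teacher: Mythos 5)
Your proposal is correct and follows essentially the same route as the paper, which obtains Theorem \ref{W3} by specializing the general Weierstrass data $(\psi_1,\psi_2,dh)$ to $\psi_2=e^{-2i\a}\psi_1^{-1}$, $\om=e^{-i\a}dh$ and translating the standard W-data constraints into the two hypotheses of the theorem; your verification of holomorphicity of the $\phi_k$ at zeros/poles of $\psi$ and the reduction of the period conditions to $\oint_\g\om=0$ are exactly the computations the paper leaves implicit.
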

	
	Similarly to the hyperbolic case, the conditions on the W-data is independent of $\a$. Hence each $M$
	with the elliptic argument $\a_0$  naturally induces a family of degenerate stationary surfaces $\{M_\a:\a\in (0,\f{\pi}{2})\}$
	via the invariant W-data $(\psi,\om)$. It is worthy to note that, letting $\a:=\f{\pi}{2}$ in (\ref{W2}) defines a minimal
	surface $M_{\f{\pi}{2}}\subset \R^3$, which is called the {\it associated minimal surface in $\R^3$}. Conversely,
	given a minimal surface $M_{\f{\pi}{2}}\subset \R^3$ given by
	\begin{equation}\label{W4}
	\mb{x}:=\text{Re}\int i(\psi-\psi^{-1}),\psi+\psi^{-1},2i,0)\om,
	\end{equation}
	where $(\psi,\om)$ is the W-data satisfying the conditions of Theorem \ref{W3}, we can define the deformation (\ref{W2}) with
	the parameter $\a\in (0,\pi/2)$, yielding a family of degenerate stationary surfaces of elliptic type. This is so called
{\it Al\'{\i}as-Palmer deformation} \cite{A-P}. Let
	\begin{equation}
	d\hat{s}^2:=(|\psi|^2+|\psi|^{-2}|)|w|^2,
	\end{equation}
	be a metric independent of the parameter $\a$,
	then $|\psi|^2+|\psi|^{-2}\geq 2$ implies
	\begin{equation}
	2(1-|\cos(2\a|))d\hat{s}^2\leq ds^2\leq 2(1+|\cos(2\a)|)d\hat{s}^2,
	\end{equation}
	hence this deformation preserves the completeness property. This observation along with the results
	of K. Voss \cite{V} and H. Fujimoto \cite{F} on the exceptional values of Gauss maps for nonflat complete minimal surfaces in
$\R^3$ enable us to get the following conclusion:
	
	\begin{thm}\label{be2}
		Let $E$ be a subset of $\C^*$, $\a\in (0,\f{\pi}{2})$, then
		\begin{itemize}
			\item If $E$ is a finite set of no more than $4$ points, then there is a complete degenerate stationary surface $M$
			with the elliptic argument $\a$, such that the image of $\psi$ omits exactly those points in $E$.
			\item If $E$ contains at least $5$ points, then any complete degenerate stationary surface $M$ of elliptic type
			has to be affine linear, whenever the image of $\psi$ is contained in $\C^*\backslash E$.
		\end{itemize}
	\end{thm}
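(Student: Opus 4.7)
The plan is to reduce both assertions to classical results for complete minimal surfaces in $\R^3$ through the Al\'{\i}as-Palmer deformation just introduced. Given W-data $(\psi,\om)$ satisfying the hypotheses of Theorem \ref{W3}, formula (\ref{W2}) produces, for each $\a\in(0,\pi/2)$, a degenerate stationary surface of elliptic type with elliptic argument $\a$; the same W-data plugged into (\ref{W4}) at $\a=\pi/2$ produces a minimal surface in $\R^3$. All of these surfaces live on the same Riemann surface $M$ and share the same $\psi$. Moreover, the sandwich estimate
\begin{equation}
2(1-|\cos 2\a|)\,d\hat s^2\le ds^2\le 2(1+|\cos 2\a|)\,d\hat s^2,\qquad d\hat s^2=(|\psi|^2+|\psi|^{-2})|\om|^2,
\end{equation}
recorded in the excerpt shows that completeness of any single member of the family is equivalent to completeness of every other, including the $\R^3$ minimal surface at $\a=\pi/2$.

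For the existence half, fix a finite set $E\subset\C^*$ with $|E|\le 4$. Voss's construction \cite{V} supplies a complete nonflat minimal surface in $\R^3$ whose classical Gauss map omits exactly a prescribed set of at most four points of $\Bbb{S}^2$; via a rotation of $\R^3$ (equivalently, a M\"obius change of coordinates on $\Bbb{S}^2$) we may arrange the omitted set to coincide with $E$ under stereographic projection. Writing this minimal surface in the form (\ref{W4}) yields W-data $(\psi,\om)$ satisfying the hypotheses of Theorem \ref{W3} with image of $\psi$ equal to $\C^*\setminus E$; feeding this $(\psi,\om)$ into (\ref{W2}) for the prescribed $\a\in(0,\pi/2)$ then produces the desired complete degenerate stationary surface of elliptic type with $\psi$ omitting exactly $E$.

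For the rigidity half, let $M$ be a complete degenerate stationary surface of elliptic type whose $\psi$ omits at least five points of $\C^*$, with W-data $(\psi,\om)$. Applying the deformation in the reverse direction, the same $(\psi,\om)$ substituted into (\ref{W4}) defines a minimal surface $M_{\pi/2}\subset\R^3$, which is complete by the sandwich estimate, and whose classical Gauss map is the same $\psi$ and therefore omits at least five values on $\Bbb{S}^2$. Fujimoto's five-value theorem for complete minimal surfaces in $\R^3$ \cite{F} forces $M_{\pi/2}$ to be an affine plane, so $\psi\equiv c$ is constant. Substituting a constant $\psi$ into (\ref{W2}) gives $\phi=V\om$ with $V\in\C^{3,1}$ a constant vector, hence $\mb x=\mathrm{Re}\bigl(V\int\om\bigr)$ takes values in the real affine 2-plane spanned by $\mathrm{Re}\,V$ and $\mathrm{Im}\,V$, and $M$ is affine linear.

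The main obstacle is to verify that the period conditions of Theorem \ref{W3} transfer cleanly between the $\R^3$ and $\R^{3,1}$ sides of the deformation. This is actually immediate: the integrand in (\ref{W2}) is a fixed $\a$-dependent complex-linear combination of the three basic differentials $\psi\om$, $\psi^{-1}\om$, and $\om$, and the three period conditions of Theorem \ref{W3} involve exactly these three differentials independently of $\a$. Thus the period constraints hold for every $\a$ as soon as they hold for one value, so Voss's realizability in $\R^3$ transports without obstruction to $\R^{3,1}$, and completeness is then handled by the sandwich estimate.
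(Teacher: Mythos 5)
Your argument is correct and is exactly the route the paper intends: Theorem \ref{be2} is presented there as an immediate consequence of the completeness-preserving Al\'{\i}as--Palmer deformation (the sandwich estimate for $d\hat{s}^2$) combined with Voss's examples for the existence half and Fujimoto's theorem for minimal surfaces in $\R^3$ for the rigidity half, with no further proof supplied. The only detail worth making explicit is that in the existence direction the period condition $\oint_\g \om=0$ required by Theorem \ref{W3} is strictly stronger than what a minimal surface in $\R^3$ written in the form (\ref{W4}) guarantees (namely only $\text{Im}\oint_\g \om=0$, since the fourth component vanishes at $\a=\pi/2$), so you should realize Voss's W-data on the universal cover, where all periods vanish while both the completeness of the metric and the image of $\psi$ are unchanged.
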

	
	Now we assume $M$ is complete and has finite total Gauss Curvature. Due to Huber's theorem \cite{Hu}, $M$ is conformally equivalent to $\bar{M}\backslash \{p_1,\cdots,p_r\}$
	with $\bar{M}$ a compact Riemann surface. If $p_i$ is an essential singularity of $\psi$, then according to Picard's big theorem,
on any neighborhood of $p_i$,
the Gauss map $G$ takes each values of $H_A^+\cong \C^*$ infinitely often, except for at most 2 points, which forces the infinity of the area of the Gauss image,
	and also the divergence of the total Gauss curvature. Therefore, $\psi$ can be extended to a meromorphic function on $\bar{M}$,
	which takes each values of $\C^*$ for a fixed number of times. In conjunction with (\ref{area}) we get the following result:
	\begin{pro}\label{totalcur}
		Let $M$ be a complete degenerate stationary surface of elliptic type, then the total Gauss curvature of $M$ is either $\infty$
		or $-4\pi m$, where $m$ equals the degree of $\psi$.
		
	\end{pro}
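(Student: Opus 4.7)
\emph{Proof proposal.} The statement is a dichotomy, so the argument splits according to whether $\int_M|K|\,dA_M$ is finite or infinite. The infinite case needs no work, so I focus on the finite case with the goal of showing $\int_M K\,dA_M = -4\pi m$ for $m=\deg\psi$.

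First, by Huber's theorem \cite{Hu} applied to the complete surface $M$ of finite total curvature, $M$ is conformally equivalent to $\bar{M}\setminus\{p_1,\ldots,p_r\}$ for some compact Riemann surface $\bar{M}$. The central point is then to prove that the meromorphic function $\psi$ of the Weierstrass data (Theorem \ref{W3}) extends meromorphically across each puncture $p_i$ to all of $\bar{M}$.

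Second, I would rule out essential singularities by the Great Picard Theorem. If $\psi$ had an essential singularity at some $p_i$, then on every punctured neighbourhood $U$ of $p_i$ the image $\psi(U)$ would omit at most two values of $\C^*$ and cover each remaining value infinitely often. By Theorem \ref{class3}, the hyperplane $H_A^+$ for the elliptic type is conformally equivalent to $\C^*$ via the map $\psi_1=\psi$, so the Gauss image $G(U)\subset H_A^+$ would have $dA_g$-area at least $N\!\int_{H_A^+}\!dA_g = 4\pi N$ for every positive integer $N$. Combining this with the Gauss-equation identity $G^*g=-K\,ds^2$ and formula (\ref{area}) gives
$$\int_M (-K)\,dA_M \;=\; \int_M G^*dA_g \;=\; +\infty,$$
contradicting the finiteness assumption.

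Third, once $\psi$ has been extended to a meromorphic function on the compact surface $\bar{M}$, it is a rational map of some fixed degree $m$, hence assumes every value of $\C^*$ exactly $m$ times counted with multiplicity. Applying $G^*g=-K\,ds^2$ once more, and using (\ref{area}) together with the fact that $G$ factors as $\psi$ followed by a conformal identification $\C^*\cong H_A^+$, I obtain
$$\int_M (-K)\,dA_M \;=\; \int_M G^*dA_g \;=\; m\int_{H_A^+}dA_g \;=\; 4\pi m,$$
which is the desired identity.

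The main obstacle is step two: making rigorous the assertion that the Gauss image of a small neighbourhood of an essential singularity of $\psi$ has unbounded $dA_g$-area. What must be checked is that the identification $H_A^+\cong \C^*$ used implicitly pulls back $dA_g$ to a smooth positive area form on $\C^*$, so that each of the infinitely many preimages under $\psi$ of any fixed compact subset of $\C^*$ with nonempty interior contributes a uniform positive amount to $\int_M G^*dA_g$; step three then also relies on the same identification to equate the degree of the Gauss map with $\deg\psi$.
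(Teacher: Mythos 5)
Your proposal follows essentially the same route as the paper: Huber's theorem to compactify, Picard's big theorem to exclude essential singularities of $\psi$ (since an essential singularity would make the Gauss image cover $H_A^+\cong\C^*$ infinitely often and force infinite total curvature), and then the degree count combined with $\int_{H_A^+}dA_g=4\pi$ from (\ref{area}) and $G^*g=-K\,ds^2$. The concern you flag at the end is legitimate but is already implicit in the paper's computation of the metric on $H_A^+$ in \S\ref{metric4}, so there is no substantive gap.
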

	
\noindent {\bf Remark. }On the premise that $M$ is algebraic, i.e. $\phi=\mb{x}_zdz$ can be extended to a vector-valued meromorphic form on $\bar{M}$, we conclude that all ends of $M$ has to be regular, i.e. $\psi_2\neq \bar{\psi}_1$ holds everywhere on
$\bar{M}$ (see \cite{M-W-W}),
and then $\int_M KdA_M=-4\pi \text{deg }\psi$
follows directly from the Gauss-Bonnet type formula in \cite{M-W-W}.  However, this premise does not always hold
for an arbitrary complete space-like stationary surface in $\R^{3,1}$ with finite total Gauss curvature, due to the counterexamples
constructed by Ma-Wang-Wang \cite{M-W-W}.


	Let $F:(u_1,u_2)\in \R^{1,1}\mapsto (F_1(u_1,u_2),F_2(u_1,u_2))\in \R^2$ be a vector-valued function, such that
	\begin{equation}
	M:=\{(F_1(u_1,u_2),F_2(u_1,u_2),u_1,u_2):(u_1,u_2)\in \R^{1,1}\}
	\end{equation}
	is a space-like stationary graph. Then the induced metric $ds^2=g_{ij}du_idu_j$ on $M$ with $g_{ij}:=\lan \mb{x}_{u_1},\mb{x}_{u_2}\ran$
	has to be positive definite, and $u_1,u_2$ are both harmonic functions. In conjunction with the expression of the Laplace operations on $M$,
	we can proceed as in \cite{M-W-Y} to find smooth functions $\xi_1,\xi_2$ on $\R^{1,1}$, such that
	\begin{equation}
	\left(\pd{\xi_i}{u_j}\right)=W^{-1}(g_{ij})\qquad \text{with }W:=\det(g_{ij})^{\f{1}{2}}.
	\end{equation}
	Define the Lewy's transformation (see \cite{Le}) $L:(x_1,x_2)\in \R^{1,1}\ra (\eta_1,\eta_2)\in \R^{1,1}$ by
	\begin{equation}
	\eta_i=u_i+\xi_i(u_1,u_2),\quad i=1,2,
	\end{equation}
	then applying the same technique in \cite{M-W-Y}, we conclude that $L$ is a diffeomorphism of $\R^{1,1}$ onto itself,
	and $(\eta_1,\eta_2)$ are global isothermal parameters on $M$. Denote $\ze:=\eta_1+i\eta_2$, then
	\begin{equation}
	f_i:=\pd{u_i}{\ze}=\f{1}{2}\Big(\pd{u_i}{\eta_1}-i\pd{u_i}{\eta_2}\Big)\qquad i=1,2
	\end{equation}
	are both entire functions, satisfying $\text{Im}(f_1\bar{f}_2)=\pd{(u_1,u_2)}{(\eta_1,\eta_2)}>0$
	everywhere. Noting that $f_1\bar{f}_2=\f{f_1}{f_2}\cdot |f_2|^2$, we conclude $f_2=cf_1$ with $\text{Im}(c)<0$
	and $f_1$ has no zero, due to the Liouville's theorem. Let $c:=a-ib$, then taking the real and imaginary parts
	of $f_2=cf_1$ gives
	\begin{equation*}
	\aligned
	\pd{u_2}{\eta_1}&=a\pd{u_1}{\eta_1}-b\pd{u_1}{\eta_2},\\
	\pd{u_2}{\eta_2}&=b\pd{u_1}{\eta_1}+a\pd{u_1}{\eta_2}.
	\endaligned
	\end{equation*}
	Now we let $(u,v)$ be global parameters of $M$,
	such that there is a non-singular transformation
	\begin{equation}
	\aligned
	u_1&=u,\\
	u_2&=au+bv,
	\endaligned
	\end{equation}
	then
	$$\pd{v}{\eta_1}=-\pd{u}{\eta_2},\quad \pd{v}{\eta_2}=\pd{u}{\eta_1}$$
	and hence $(u,v)$ are global isothermal parameters.
	Using the complex parameter $z:=u+iv$, we have
\begin{equation}
\phi=(h_1,h_2,\f{1}{2},\f{c}{2})dz
\end{equation}
 with $h_1$ and $h_2$ being both entire functions.
	$\lan \phi,\phi\ran=0$ implies
	\begin{equation}
	h_1=\mu \cos\be,\qquad h_2=\mu \sin\be,
	\end{equation}
	where $\mu^2=\f{c^2-1}{4}$ and $\be$ is an entire function, and
	\begin{equation}
	ds^2=\lan \phi,\bar{\phi}\ran=\Big(|\mu|^2\cosh(2\text{Im}\be)+\f{1}{4}-\f{|c|^2}{4}\Big)|dz|^2\geq \f{|c^2-1|+1-|c|^2}{4}|dz|^2
	\end{equation}
	is a complete metric. The expression of $\phi$ means the Gauss image of $M$ lies
	in $H_A$ with $[A]=[(0,0,c,1)]$, and
	$$\psi_1=\f{\phi_1+i\phi_2}{\phi_3+\phi_4}=\f{2\mu}{1+c}e^{i\be}$$
	takes each values of $\C\backslash \{0\}$ for infinitely times unless $\be$ is a constant function. This implies
	$\int_M KdA_M$ is divergent. We rewrite the above conclusions as the following theorem:
	
	\begin{thm}\label{graph1}
		Each entire space-like stationary graph $M$ of $F:\R^{1,1}\ra \R^2$ is a complete degenerate stationary surface of elliptic type.
		Moreover, if $M$ is nonflat, then
		$\psi_i$ ($i=1$ or $2$) takes each point in $\C^*$ for infinitely times, with the exception of exactly 2 points, and the total Gauss curvature of $M$
		is $\infty$.
	\end{thm}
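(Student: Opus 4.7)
The plan is to mimic the strategy used for the graph case over a space-like plane in Theorem \ref{grph}, but now adapted to a graph over the Lorentzian plane $\R^{1,1}$, replacing the direct use of isothermal parameters by \emph{Lewy's transformation} to globally unfold the graph into a conformally flat chart.

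First, I would start from the parametrization $\mb{x}(u_1,u_2)=(F_1,F_2,u_1,u_2)$, record that the induced metric $g_{ij}=\lan \mb{x}_{u_i},\mb{x}_{u_j}\ran$ is positive definite by the space-like assumption, and extract from the stationary equation the fact that $u_1,u_2$ are harmonic with respect to the induced Laplacian. Following the argument of \cite{M-W-Y}, I would find $\xi_1,\xi_2$ on $\R^{1,1}$ with $(\pd{\xi_i}{u_j})=W^{-1}(g_{ij})$ and introduce Lewy's map $\eta_i=u_i+\xi_i$; the same computation as in \cite{M-W-Y} shows $L:\R^{1,1}\ra \R^{1,1}$ is a global diffeomorphism and $(\eta_1,\eta_2)$ are global isothermal coordinates on $M$. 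Setting $\ze=\eta_1+i\eta_2$, the functions $f_i=\pd{u_i}{\ze}$ are entire with $\text{Im}(f_1\bar f_2)>0$ everywhere, so $f_1/f_2$ is a nonvanishing entire function avoiding a half-plane; Liouville forces $f_2=c f_1$ with $\text{Im }c<0$, and then $f_1$ must be a nonzero constant (otherwise $\text{Im}(f_1\bar f_2)=|f_1|^2\text{Im }c$ would not force $f_1$ to be nonzero everywhere — this is the point I would double-check).

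After a real linear change of variables $u_1=u$, $u_2=au+bv$ with $c=a-ib$, the coordinates $(u,v)$ become global isothermal parameters, and in $z=u+iv$ the holomorphic 1-form becomes $\phi=(h_1,h_2,\f{1}{2},\f{c}{2})dz$. The relation $\lan \phi,\phi\ran=0$ of (\ref{phi1}) gives $h_1=\mu\cos\be$, $h_2=\mu\sin\be$ with $\mu^2=\f{c^2-1}{4}$ and $\be$ an entire function. Reading off $[A]=[(0,0,c,1)]$ from the form of $\phi$ and using $\text{Im }c<0$, the hyperplane $H_A$ is non-totally real with $P_A$ time-like, so $M$ is of elliptic type by Theorem \ref{class1}. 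Completeness follows from the explicit bound
\begin{equation*}
ds^2=\Big(|\mu|^2\cosh(2\,\text{Im}\be)+\tfrac{1}{4}-\tfrac{|c|^2}{4}\Big)|dz|^2\ge \f{|c^2-1|+1-|c|^2}{4}|dz|^2,
\end{equation*}
whose right-hand side is a positive constant multiple of the flat metric on $\C$.

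For the value-distribution conclusion, I would compute $\psi_1=\f{\phi_1+i\phi_2}{\phi_3+\phi_4}=\f{2\mu}{1+c}e^{i\be}$, which avoids $0$ and $\infty$. If $M$ is nonflat, then $\be$ is a nonconstant entire function, so $e^{i\be}$ is a nonconstant entire function omitting $0$; Picard's big theorem then guarantees $\psi_1$ takes every value of $\C^*$ infinitely often with the exception of at most one finite point, which together with the automatic omission of $\infty$ (and a symmetric argument via $\psi_2=e^{-2i\a}\psi_1^{-1}$ of \S\ref{elliptic}) accounts for exactly two exceptional values. Finally, since the area form on $H_A^+$ is finite by (\ref{area}) in \S\ref{metric4} but $G$ covers $H_A^+\setminus\{\text{2 pts}\}$ infinitely often, the total Gauss curvature $\int_M K\,dA_M=-\int_M G^*dA_g$ diverges. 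The step I expect to be the most delicate is verifying that Lewy's transformation remains a global diffeomorphism in the Lorentzian setting and that the Liouville-type argument correctly rules out zeros of $f_1$; once these are in place, the remainder is a direct unpacking of the Weierstrass data combined with Picard.
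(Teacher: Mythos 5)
Your proposal follows the paper's own proof essentially step for step: Lewy's transformation to obtain global isothermal parameters, Liouville's theorem applied to $\text{Im}(f_1\bar f_2)>0$ to get $f_2=cf_1$ with $\text{Im}\,c<0$ and $f_1$ zero-free, the linear substitution $u_1=u$, $u_2=au+bv$ making $(u,v)$ isothermal, the resulting Weierstrass data $\phi=(\mu\cos\be,\mu\sin\be,\f{1}{2},\f{c}{2})\,dz$ identifying $M$ as a complete degenerate surface of elliptic type, and Picard together with the finiteness of $\int_{H_A^+}dA_g$ for the value-distribution and total-curvature conclusions. The only slip is your assertion that $f_1$ must be a nonzero constant --- the Liouville step only yields $f_2=cf_1$ with $f_1$ nonvanishing, and your stated justification does not establish constancy --- but this claim is never actually used, since the Cauchy--Riemann computation showing $(u,v)$ are isothermal needs only $f_2=cf_1$.
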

	
	Unlike the hyperbolic case, a complete space-like stationary graph $M$ of $F: D\subset \R^{1,1}\ra \R^2$ can be neither an entire graph, nor a surface with infinite total
	Gauss curvature, even if we assume $M$ is a degenerate stationary surface of elliptic type. A counterexample has been given in \cite{M-W-W}. This is an complete graph over a punctured time-like plane given by
	\begin{equation}\label{graph-time2}
	\phi=(z^n+\f{1}{z^n},-i(z^n-\f{1}{z^n}),\sqrt{3}i,1)\qquad z\in \C\backslash \{0\},n\in \Bbb{Z},n\geq 2,
	\end{equation}
	and the total Gauss curvature is $-4\pi n$. Such examples can be characterized as follows:
	
	\begin{thm}\label{graph-time}
		Let $M$ be a complete space-like stationary surface, satisfying:
		\begin{itemize}
			\item $M$ is a graph over a domain of a time-like plane;
			\item $M$ has finite total Gauss curvature.
		\end{itemize}
		Then $M$ has to be a degenerate stationary surface with the
		elliptic argument $\a\in (0,\f{\pi}{2})$,  given by (\ref{W2}) with respect to the W-data
		\begin{equation}
		\psi=h,\quad \om=dz
		\end{equation}
		over $\{z\in \C: h(z)\in \C\backslash \{0\}\}$. Here $h$ is a rational function on $\C$, such that
	 the residue on each pole of $h$ or $h^{-1}$ equals $0$. And $\int_M KdA_M=-4\pi \text{deg}(h)$.
		
	\end{thm}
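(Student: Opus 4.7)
The strategy combines Huber's theorem (via finite total curvature) with the graph condition—which forces the linear relation $\phi_4 = c\phi_3$ and hence elliptic-type degeneracy—and a planarity argument to identify $\bar{M}$ as $\Bbb{CP}^1$ and $\psi$ as a rational function.

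By Huber's theorem $M \cong \bar{M} \setminus \{p_1, \ldots, p_r\}$ for some compact Riemann surface $\bar{M}$. In a local isothermal coordinate $z$, write $\phi_k = f_k\, dz$. The graph hypothesis translates into $\text{Im}(\bar{f}_3 f_4) \neq 0$ everywhere (this is the Jacobian of the projection to the time-like plane), so $f_3, f_4$ never vanish and $f_4/f_3: M \to \C\setminus\R$ is holomorphic. Composition with the Cayley transform yields a bounded holomorphic function on $M = \bar{M} \setminus \{p_i\}$, which by Riemann's removable singularity theorem extends to a holomorphic function on the compact $\bar{M}$, hence is constant by the maximum principle. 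Therefore $f_4/f_3 \equiv c$ with $c \notin \R$, so $\phi_4 = c\phi_3$. The Gauss image lies in $H_A$ with $[A] = [(0,0,c,1)]$, and $\lan X\wedge Y, X\wedge Y\ran = -(\text{Im}\, c)^2 < 0$ for the real/imaginary parts $X, Y$ of $A$ confirms $H_A$ is of elliptic type. After applying a Lorentz transformation that puts $[A]$ in the normal form of Theorem \ref{class1} (giving $c = -i\cot\a$ for a unique $\a \in (0, \f{\pi}{2})$), we may assume the W-data lies in the standard elliptic framework.

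Writing $c = a + ib$ with $b \neq 0$ and using $2\text{Im}(\phi_3) = du_1^*$ for a local harmonic conjugate $u_1^*$ of $u_1$, the identity $\phi_4 = c\phi_3$ gives $du_2 = a\, du_1 - b\, du_1^*$, so $du_1^* = (a\, du_1 - du_2)/b$ is exact and $u_1^*$ is globally single-valued on $M$. Then $\zeta := u_1 + iu_1^*$ is a globally holomorphic function, injective because $(u_1, u_2)$ is, and $\zeta$ biholomorphically embeds $M$ as a planar domain in $\C$. This forces $\bar{M}$ to have genus zero, i.e.\ $\bar{M} = \Bbb{CP}^1$; the meromorphic extension of $\zeta$ is injective on the dense subset $M$, hence a M\"{o}bius transformation, so after a M\"{o}bius change of coordinate and a constant rescaling we may assume $z = \zeta$ is the standard coordinate on $\Bbb{CP}^1$ and $\om = dz$ in the elliptic form. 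Arguing as in the proof of Proposition \ref{totalcur}, an essential singularity of $\psi = \psi_1$ at some $p_i$ would force the Gauss image to cover $H_A^+$ (whose total area is $4\pi$ by (\ref{area})) infinitely often, contradicting finite total curvature; so $\psi$ extends meromorphically to $\Bbb{CP}^1$ and $\psi = h$ is rational.

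Finally, the period conditions $\text{Re}\oint_\g \phi_k = 0$ around each puncture force $\text{Im}(\text{Res}_{p_i}\phi_k) = 0$. At a pole of $h$, $\text{Res}_{p_i}\phi_1 = e^{i\a}\text{Res}_{p_i}(h\, dz)$ and $\text{Res}_{p_i}\phi_2 = -ie^{i\a}\text{Res}_{p_i}(h\, dz)$; requiring both imaginary parts to vanish yields $\text{Res}_{p_i}(h\, dz) = 0$. The symmetric computation at a zero of $h$ gives $\text{Res}_{p_i}(h^{-1}dz) = 0$. The formula $\int_M K\, dA_M = -4\pi\, \text{deg}(h)$ follows immediately from Proposition \ref{totalcur}. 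The main obstacle is identifying $\bar{M} = \Bbb{CP}^1$: the crucial observation is that the elliptic relation $\phi_4 = c\phi_3$ supplied by the graph condition is precisely what makes the multi-valued harmonic conjugate $u_1^*$ collapse to a single-valued function expressible in $u_1, u_2$, enabling the planar embedding; without this rigidity one cannot force $\bar{M}$ to be $\Bbb{CP}^1$ and the rationality of $\psi$ would not follow.
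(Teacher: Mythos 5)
Your argument has the same overall architecture as the paper's proof: Huber's theorem, the graph condition forcing $\phi_4=c\,\phi_3$ with $\text{Im}\,c\neq 0$, reduction to the elliptic Weierstrass form with $\om=dz$ via a global flat coordinate, rationality of $\psi$ from finite total curvature, and the residue computation. Two steps are justified differently, and both of your alternatives work. The paper first gets genus $0$ directly from the injectivity of the projection onto the time-like plane (so $M$ is a planar domain), then applies Picard's theorem on $\C\setminus\{\zeta_1,\dots,\zeta_m\}$ to conclude $f_4/f_3\equiv c$; you reverse the order, proving constancy of $f_4/f_3$ on a compact surface of arbitrary genus by the Cayley transform, Riemann's removable singularity theorem and the maximum principle (more elementary than Picard), and only then deducing genus $0$ from the holomorphic embedding $\zeta=u_1+iu_1^*$. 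Your construction of that coordinate from the exactness of $du_1^*=(a\,du_1-du_2)/b$ is the same in substance as the paper's appeal to the global isothermal parameters of Theorem \ref{graph1}.

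Two points need to be added. First, the meromorphic extension of $\zeta$ across the punctures is asserted but not justified: you should note that an essential singularity of an injective holomorphic function is excluded by the Casorati--Weierstrass theorem together with the open mapping theorem, after which injectivity on $\Bbb{CP}^1$ does give a M\"obius transformation. Second, and more substantively, the statement asserts that the parameter domain is exactly $\{z\in\C:h(z)\in\C\backslash\{0\}\}$, and you never identify the puncture set with the zeros and poles of $h$. One direction uses completeness: if $h$ were holomorphic and nonvanishing at a puncture, the metric $2\big(|h|^2+|h|^{-2}-2\cos(2\a)\big)|dz|^2$ of (\ref{metric2}) would be bounded near it, so a path running into the puncture would have finite length. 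The other direction uses the regularity of the W-data: since $\om=dz$ has no zeros, $\psi=h$ can have neither zeros nor poles on $M$ itself, so every zero and pole of $h$ must be among the punctures. This is a one-line repair, but as written the proposal does not establish that part of the conclusion.
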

	
	\begin{proof}
		$M$ is a graph means the projection onto the time-like plane is injective, hence the genus of $M$
		is $0$. Applying Huber's theorem, $M$ is conformally equivalent to $\{\ze\in \C: \ze\neq \ze_i, \forall i=1,\cdots,m\}$.
		Let $\eta_1:=\text{Re}\ze$, $\eta_2:=\text{Im}\ze$, $\phi_3=f_1d\ze$, $\phi_4=f_2d\ze$, then
		$\Im(f_1\bar{f}_2)=\pd{(x_3,x_4)}{(\eta_1,\eta_2)}\neq 0$ everywhere, i.e. $\f{f_2}{f_1}$ takes values in a half plane.
		The Picard theorem says $f_2=cf_1$ with $\text{Im }c\neq 0$ and $f_1$ has no zero. Afterwards, we can proceed as in the proof
		of Theorem \ref{graph1} to find a global isothermal parameters $(u,v)$ of $M$, such that there exists a non-singular transformation between
		$(x_3,x_4)$ and $(u,v)$. Letting $z:=\la(u+iv)$ with a suitable complex constant $\la$ and making a suitable $SO^+(3,1)$-action, we can assume the
		Weierstrass representation of $M$ is given in (\ref{W2}) with $\psi=h$, $\om=dz$, and the parameter domain is replaced by
		$\C\backslash \{z_1,\cdots,z_m\}$. Finally, the completeness of $M$ equals to say $z_1,\cdots,z_m$ are exactly the zeros and poles of $h$,
		and the period condition of $M$ holds if and only if the residue on each pole of $h$ or $h^{-1}$ is $0$.

	\end{proof}
	
\noindent {\bf Remark. }Given a rational function $f$, the residue on every pole of $f$ all equals $0$ if and only $f$ is the derivative of another rational function.
Hence $h(z):=z^n$ with $n\geq 2$ satisfies the conditions of Theorem \ref{graph-time}, which induces the known example (\ref{graph-time2}).
In fact, if $h$ is a polynomial, denote
$$\f{1}{h}=\left(\f{P}{Q}\right)'=\f{P'Q-PQ'}{Q^2}$$
with polynomials $P,Q$ that are relatively prime to each other, then $z\in \C$ is a zero of $P'Q-PQ'$ of order $r$ if and only if
it is the zero of $Q$ of order $r+1$; Comparing the degrees of the above 2 polynomials implies $P\equiv \text{const}$, $Q$ has exactly 1 zero and hence $h(z)=c(z-z_0)^n$ with $c\neq 0$
and $n\geq 2$. However, without this assumption, we can construct a constellation of such functions, for example, putting
\begin{equation}
h(z):=\f{(z^{n+1}-a)^m}{z^n}\qquad \text{with }a\neq 0,n,m\geq 2
\end{equation}
ensures $h$ and $h^{-1}$ are both derivatives of rational functions.
	

	\subsection{Degenerate stationary surfaces of parabolic type}\label{parabolic}
	
	
	For the parabolic case, as shown in Theorem \ref{class3} and Proposition \ref{conj2}, there exists $S\stackrel{conj}{\sim} \begin{pmatrix}
	1 & 1\\
	0 & 1
	\end{pmatrix}$, such that $\Psi(H_A)$ ($\Psi(H_A^+)$) is the graph of $\mc{M}_S$ over $\C^*$ ($\C^*\backslash E_S$), where
	$E_S$ consists of exact 1 point corresponding
	to the unique conjugate eigenvector of $S$ (up to scaling). Without loss of generality we can assume
	\begin{equation}
	S=\begin{pmatrix}
	1 & 1\\
	0 & 1
	\end{pmatrix}
	\end{equation}
	then
\begin{equation}
\mc{M}_S(w)=w+1,\quad E_S=\{\infty\}.
\end{equation}
	Let
	\begin{equation}
	\psi:=\psi_1,
	\end{equation}
	then $\psi$ is
	a holomorphic function, $\psi_2=\psi+1$ and (\ref{phi4}) and (\ref{phi5}) become
	\begin{equation}
	\phi=(2\psi+1,i,1-\psi-\psi^2,1+\psi+\psi^2)dh,
	\end{equation}
	\begin{equation}\label{metric5}
	ds^2=2(1+4(\text{Im}\psi)^2)|dh|^2.
	\end{equation}
	Thereby, we get the Weierstrass representation for degenerate stationary surfaces of parabolic type as follows:
	
	\begin{thm}
		Given a holomorphic 1-form $dh$ and a holomorphic function $\psi$ globally defined on a Riemann surface $M$, if
		\begin{itemize}
			\item $dh$ has no zero,
			\item $\oint_\g dh=0$ and $\text{Re}\oint_\g \psi dh=\text{Re}\oint_\g \psi^2 dh=0$ for each closed path
			in $M$,
		\end{itemize}
		then
		\begin{equation}\label{W5}
		\mb{x}:=\text{Re}\int (2\psi+1,i,1-\psi-\psi^2,1+\psi+\psi^2)dh
		\end{equation}
		defines a degenerate stationary surface of parabolic type. Conversely, all degenerate stationary
		surfaces of parabolic type can be expressed in this form.
		
	\end{thm}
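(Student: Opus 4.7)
The plan is to mirror the proofs of Theorem \ref{W1} and Theorem \ref{W3}: first verify that the given data defines a generalized stationary surface, then identify its Gauss image with a hyperplane of parabolic type, and finally establish the converse by reducing to canonical form via the $SO^+(3,1)$-action.

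For the forward direction, I would take the $\phi$ in (\ref{W5}) and check the null condition (\ref{phi1}) by direct computation: the identity $(2\psi+1)^2-1=4\psi^2+4\psi=(2\psi+2\psi^2)\cdot 2=-[(1-\psi-\psi^2)^2-(1+\psi+\psi^2)^2]$ is a one-line algebraic check. The period condition (\ref{phi3}) for each of the four components translates precisely into the three stated real period hypotheses together with $\oint_\g dh=0$ (the $\phi_2=i\,dh$ component contributes $\mathrm{Re}\oint i\,dh=-\mathrm{Im}\oint dh$, which vanishes since $\oint_\g dh=0$ is a complex condition). For (\ref{phi2}), the computation (\ref{metric5}) already exhibited in the excerpt gives $ds^2=2(1+4(\mathrm{Im}\,\psi)^2)|dh|^2$, which is strictly positive wherever $dh\ne 0$; this also shows automatically that the space-like compatibility $\psi_2\ne\bar\psi_1$ holds, since $\psi+1=\bar\psi$ would force $2i\,\mathrm{Im}\,\psi=-1$. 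Finally, since $\psi_1=\psi$ and $\psi_2=\psi+1=\mathcal{M}_S(\psi_1)$ with $S=\begin{pmatrix}1&1\\0&1\end{pmatrix}$, the image of the Gauss map lies in $\Psi^{-1}(\mathrm{graph}\,\mathcal{M}_S)$, which by Theorem \ref{class3} is a hyperplane of parabolic type.

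For the converse, let $M$ be a degenerate stationary surface of parabolic type, so its Gauss image lies in some hyperplane $H_A$ with $[A]$ of parabolic type. By Theorem \ref{class3} together with Proposition \ref{conj3}, I can choose $T\in SL(2,\C)$ such that, after the Lorentz transformation corresponding to $T$, the resulting surface has its Gauss image inside the hyperplane defined by $S=\begin{pmatrix}1&1\\0&1\end{pmatrix}$, i.e.\ $\psi_2=\psi_1+1$. Setting $\psi:=\psi_1$, substitution into the general Weierstrass formula (\ref{phi4}) recovers exactly $\phi=(2\psi+1,\,i,\,1-\psi-\psi^2,\,1+\psi+\psi^2)\,dh$, and then the three period conditions in the theorem statement drop out of (\ref{phi3}) by linear combinations of $\mathrm{Re}\oint\phi_i$. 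The condition that $dh$ has no zero needs a brief separate argument: zeros of $dh$ correspond to points where exactly one of $\psi_1,\psi_2$ has a pole of the same order, but $\psi_2=\psi_1+1$ forces the two meromorphic functions to share their pole divisors exactly, so no such point exists.

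The only place where the argument has genuine content beyond routine bookkeeping is the reduction to canonical form, and this is already packaged by Proposition \ref{conj3} and Theorem \ref{class3}. The remaining concern is that applying the $SO^+(3,1)$-action changes $M$ by an ambient isometry but does not affect what it means to admit the representation (\ref{W5}); since the theorem only claims that \emph{every} such surface can be written this way (up to congruence being implicit in the setup of Section \ref{S4}), this is fine. Thus I expect no substantial obstacle — the proof is essentially a transcription of the hyperbolic and elliptic arguments with the Jordan-block canonical form $S=\begin{pmatrix}1&1\\0&1\end{pmatrix}$ substituted throughout.
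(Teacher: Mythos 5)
Your proposal is correct and follows essentially the same route as the paper, which obtains this theorem directly from the reduction to the canonical form $S=\begin{pmatrix}1&1\\0&1\end{pmatrix}$ via Theorem \ref{class3} and the substitution $\psi_2=\psi_1+1$ into (\ref{phi4})--(\ref{phi5}); your verification of the null, period, and positivity conditions and of the absence of zeros of $dh$ just makes explicit the bookkeeping the paper leaves implicit.
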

	
	
	Let
	\begin{equation}
	\R_+^{1,0}:=\R \ep_1\oplus \R (\ep_3+\ep_4),\quad \R_-^{1,0}:=\R \ep_2\oplus \R(\ep_3-\ep_4)
	\end{equation}
	be light-like planes of $\R^{3,1}$. Assume $F:(u_1,u_2)\in \R_+^{1,0}\ra (F_1(u_1,u_2),F_2(u_1,u_2))\in \R_-^{1,0}$
	is a vector-valued function, so that
	\begin{equation}
	M:=\{(u_1,F_1(u_1,u_2),u_2+F_2(u_1,u_2),u_2-F_2(u_1,u_2)):(u_1,u_2)\in \R_+^{1,0}\}
	\end{equation}
	is a space-like stationary graph. We can proceed as in \S \ref{elliptic} to find a global isothermal parameters $(u,v)$, such that
	there is a non-singular transformation
	\begin{equation}\aligned
	u_1&=u,\\
	u_2&=au+bv\quad (b>0).
	\endaligned
	\end{equation}
	Then $z:=u+iv$ is a global complex parameter on $M$, and $\phi_1=\f{1}{2}dz$, $\phi_3+\phi_4=cdz$ with $c:=a-ib$. This implies
	the Gauss image of $M$ lies in $H_A$ with $A=[(-2c,0,1,-1)]$ a parabolic element. Moreover, $\phi_1^2+\phi_2^2
	+\phi_3^2-\phi_4^2=0$ forces
	\begin{equation}
	\phi=\Big(\f{1}{2},h,\f{c}{2}-\f{1}{2c}(\f{1}{4}+h^2),\f{c}{2}+\f{1}{2c}(\f{1}{4}+h^2)\Big)dz
	\end{equation}
	with an entire function $h$. An straightforward calculation shows
	\begin{equation}
	ds^2=\lan \phi,\bar{\phi}\ran=\Big[\f{1}{4}\Big(1-\text{Re}\f{\bar{c}}{c}\Big)+|h|^2-\text{Re}\Big(\f{\bar{c}}{c}h^2\Big)\Big]|dz|^2
	\end{equation}
	is a complete metric on $M$, and
	\begin{equation}
	\psi_1=\f{\phi_1+i\phi_2}{\phi_3+\phi_4}=\f{1}{2c}+\f{ih}{c}.
	\end{equation}
	These formulas along with the divergence of $\int_{H_A^+}dA_g$ (see \S \ref{metric4}) enable us to get the following conclusion:
	
	\begin{thm}\label{graph4}
		Each entire space-like stationary graph $M$ of $F:\R_+^{1,0}\ra \R_-^{1,0}$ is a complete degenerate stationary surface
		of parabolic type. More precisely, $M$ is given by
		\begin{equation*}
		\mb{x}=\text{Re}\int \Big(\f{1}{2},h,\f{c}{2}-\f{1}{2c}(\f{1}{4}+h^2),\f{c}{2}+\f{1}{2c}(\f{1}{4}+h^2)\Big)dz
		\end{equation*}
		over $\C$, where $\text{Im }c<0$ and $h$ is an arbitrary entire function. If $M$ is non-flat, then
		the total Gauss curvature of $M$ is $\infty$. Moreover,
		\begin{itemize}
			\item $M$ is algebraic if and only if $h$ is a polynomial. In this case,
		 $\psi_i$ ($i=1$ or $2$) takes each point in $\C^*$ for exact $n$ times with the exception of exact 1 point, where $n$ is the degree
of $h$.
			\item Otherwise, $\psi_i$ ($i=1$ or $2$) takes each point in $\C^*$ infinitely often, with the exception of 1 or 2 points.
		\end{itemize}

	\end{thm}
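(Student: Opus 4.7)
Plan. The argument should follow the pattern of Theorem \ref{graph1} (the elliptic graph case), with the light-like plane $\R_+^{1,0}$ replacing the time-like plane $\R^{1,1}$. The stationarity of $M$ together with the graph condition means that the restrictions to $M$ of $x_1$ and $\f{1}{2}(x_3+x_4)$ are harmonic functions that serve as global real coordinates $u_1,u_2$. As in the elliptic proof, a Lewy transformation applied to $(u_1,u_2)$ produces a diffeomorphism of $\R^2$ onto itself sending $(u_1,u_2)$ to global isothermal parameters $(\eta_1,\eta_2)$. A Liouville-type argument on the entire functions $\partial u_i/\partial \zeta$ (where $\zeta:=\eta_1+i\eta_2$) then shows $\partial u_1/\partial\zeta$ has no zero and $\partial u_2/\partial\zeta = c\,\partial u_1/\partial\zeta$ for a constant $c$ with $\text{Im}\,c<0$. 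After an affine reparametrization I obtain a global complex coordinate $z=u+iv$ on $\C$ with $u_1=u$ and $u_2=au+bv$, $b>0$.

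With $z$ defined on all of $\C$, the Weierstrass differential $\phi=\mb{x}_z\,dz$ satisfies $\phi_1=\f{1}{2}dz$ and $\phi_3+\phi_4=c\,dz$ for $c:=a-ib$. Writing $\phi_2=h\,dz$ for an entire function $h$, the null condition (\ref{phi1}) uniquely forces $\phi_3-\phi_4=-\f{1}{c}\bigl(\f{1}{4}+h^2\bigr)dz$, yielding the advertised formula for $\phi$. To identify the type I check that $\langle A,\phi\rangle=0$ for $A:=(-2c,0,1,-1)$ and apply Theorem \ref{class1}: since $\text{Im}\,c\neq 0$, the plane $P_A$ spanned by $\text{Re}\,A$ and $\text{Im}\,A$ is light-like, so $[A]$ is parabolic. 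Completeness follows from the pointwise lower bound $ds^2\geq \f{1}{4}\bigl(1-\text{Re}\f{\bar c}{c}\bigr)|dz|^2$, whose coefficient is strictly positive because $c$ is non-real.

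For the value distribution, a direct computation gives $\psi_1=\f{1+2ih}{2c}$, an entire function on $\C$, so $\infty$ is automatically omitted. In the non-flat case $h$ is non-constant, and two cases present themselves: if $h$ is a polynomial of degree $n$, then so is $\psi_1$, taking each finite value exactly $n$ times and missing only $\infty$; if $h$ is transcendental entire, Picard's big theorem yields at most one additional exceptional value, for a total of 1 or 2 omitted points. Since $\psi_2=\psi_1+1$, the same count applies to $\psi_2$. The divergence of the total Gauss curvature then follows from the fact (Case V of Section \ref{metric4}) that $\int_{H_A^+}dA_g=\infty$ together with the surjectivity of $G:M\to H_A^+$ onto the complement of a discrete set. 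Finally, $M$ is algebraic iff $\phi$ extends meromorphically to the one-point compactification of $\C$, which, since $h$ and $h^2$ are the only non-trivial ingredients appearing in $\phi$, happens iff the entire function $h$ is a polynomial.

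The main technical obstacle should be constructing the global isothermal parameters on all of $\C$ via Lewy's transformation in this light-like setting; once this is in place, the remaining steps are essentially algebraic manipulations with the Weierstrass data and standard consequences of Picard's theorems. I anticipate the Lewy argument transfers from the proof of Theorem \ref{graph1} with only minor modifications tracking the different linear combinations of ambient coordinates ($x_1$ and $\f{1}{2}(x_3+x_4)$ instead of $x_3,x_4$) that now play the role of harmonic real parameters.
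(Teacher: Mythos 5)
Your proposal is correct and follows essentially the same route as the paper: the paper's own argument is exactly the discussion preceding the theorem --- Lewy's transformation as in the time-like case to get global isothermal parameters on $\C$, the normalization $\phi_1=\f{1}{2}dz$, $\phi_3+\phi_4=c\,dz$ with the null condition determining the remaining component, the identification of $[A]=[(-2c,0,1,-1)]$ as parabolic, the lower bound on $ds^2$ for completeness, and Picard/degree counting for $\psi_1=\f{1+2ih}{2c}$. The only nitpick is that in these (non-normalized) coordinates $\psi_2=\f{1}{2c}-\f{ih}{c}$ rather than $\psi_1+1$, but since $\psi_2$ is still an invertible affine function of $h$ the value-distribution count for $\psi_2$ goes through unchanged.
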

	
	It worths to note that, in contrast to the hyperbolic and elliptic cases, an entire stationary graph over a light-like plane
	can be nonflat and algebraic. This type of surfaces can be characterized as follows:
	
	\begin{thm}\label{graph3}
		Each complete, algebraic, degenerate stationary surface in $\R^{3,1}$ of parabolic type has to be an entire graph
		over a light-like plane.
	\end{thm}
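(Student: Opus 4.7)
The plan starts by normalizing the W-data so that $\psi_2=\psi+1$ as in Section \ref{parabolic}. The period conditions for parabolic W-data reduce to $\oint_\gamma dh=0$ for every closed $\gamma$ (together with purely imaginary period conditions on $\psi\,dh$ and $\psi^2\,dh$), so the height $H:=\int dh$ is a well-defined single-valued holomorphic function on $M$. Under completeness and algebraicity, $M=\bar M\setminus\{p_1,\ldots,p_r\}$ with $\bar M$ a compact Riemann surface of genus $g$, and $\psi$, $dh$, and $H$ all extend meromorphically to $\bar M$, giving a holomorphic map $H:\bar M\to\Bbb{CP}^1$ of some degree $d$.

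The central claim to prove is that $\bar M=\Bbb{CP}^1$, $r=1$, and $d=1$, so that $H:M\to\C$ is a biholomorphism. Since $dH=dh$ is nowhere zero on $M$, every critical point of $H$ lies at a puncture, with ramification index $e_{p_i}=|n_i+1|$, where $n_i:=\mathrm{ord}_{p_i}(dh)$. I would combine Riemann-Hurwitz, $2g-2=-2d+\sum_i(|n_i+1|-1)$, with the canonical-divisor identity $\sum_i n_i=2g-2$, the residue-free condition $n_i\neq-1$, the completeness of $ds^2=2(1+4(\mathrm{Im}\,\psi)^2)|dh|^2$ along \emph{every} divergent path to each puncture, and the regular-end condition that follows from algebraicity (as in the remark after Proposition \ref{totalcur}), to rule out all configurations except $g=0$, $r=1$, $n_1=-2$, $d=1$.

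Granted the claim, take $z=H$ as the coordinate on $M=\C$, so $dh=dz$; then $\psi$, being entire and meromorphic at $\infty$, is a polynomial. Integrating $\phi_2=i\,dz$ and $\phi_3+\phi_4=2\,dz$ gives $x_2=-\mathrm{Im}(z)+c_1$ and $x_3+x_4=2\,\mathrm{Re}(z)+c_2$, so the projection $(x_2,x_3+x_4):M\to\R^2$ is an affine isomorphism. Hence $\mb{x}(M)$ is an entire graph of the $\R^{1,1}$-valued function $(x_1,x_3-x_4)$ over the light-like plane $\R\ep_2\oplus\R(\ep_3+\ep_4)$, which is exactly the conclusion of Theorem \ref{graph3} and matches the normal form of Theorem \ref{graph4} after a suitable $SO^+(3,1)$-adjustment.

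The hard part is the topological/combinatorial reduction in the second paragraph. Riemann-Hurwitz and the canonical-divisor identity alone admit other solutions (e.g., $g=0$, $r=2$ with $d\geq 2$, or $g=1$, $r=0$), so the essential input is the interplay of completeness along every approach direction to each end with the regularity of ends. The subtlety is that $\mathrm{Im}\,\psi$ can vanish along certain radial paths even when $\psi$ has a pole at the puncture, making completeness a weaker constraint than one might naively expect; the regular-end condition closes this gap by forcing the Gauss image to remain in $Q_{1,1}^+$ at each $p_i$, thereby controlling $\mathrm{Im}\,\psi$ along every end.
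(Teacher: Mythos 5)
Your overall skeleton (algebraic compactification, a degree count on $dh$, then reading off the graph structure from $\phi_2=i\,dz$ and $\phi_3+\phi_4=2\,dz$) has the right shape, and your final paragraph's identification of the projection onto $\R\ep_2\oplus\R(\ep_3+\ep_4)$ agrees with the paper. But there is a genuine gap exactly where you flag ``the hard part'': you never establish that $\mathrm{ord}_{p_i}(dh)\leq-2$ at \emph{every} puncture, and the ingredient you propose to supply it --- a ``regular-end condition that follows from algebraicity'' --- is false in the parabolic setting. The remark after Proposition \ref{totalcur} concerns the elliptic, finite-total-curvature case; here every nonflat algebraic example of Theorem \ref{graph4} (take $h$ a polynomial of degree $n\geq1$) has $\psi_1\to\infty\in E_S$ at its unique end, i.e.\ a \emph{bad} singular end, as the paper itself notes in the second remark following Theorem \ref{graph3}. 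So you cannot force the Gauss image to stay in $Q_{1,1}^+$ over $\bar M$, and the step you lean on to exclude configurations such as $g=0,\ r=2,\ d\geq2$ collapses.

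The correct mechanism is simpler than your Riemann--Hurwitz bookkeeping and inverts the intuition in your last paragraph: the arcs of $\{\mathrm{Im}\,\psi=0\}$ accumulating at a puncture are not an obstacle but the tool. Whether or not $\psi$ has a pole at $p_i$, one can choose a divergent path $\gamma\to p_i$ along which $\mathrm{Im}\,\psi$ stays bounded (identically zero, in the pole case), so that $ds\leq C|dh|$ on $\gamma$; completeness then forces $\int_\gamma|dh|=+\infty$, hence $dh$ has a pole at $p_i$, and the period condition $\oint_\gamma dh=0$ (vanishing residue) upgrades the order to $\nu_i\geq2$. Since $dh$ is nowhere zero on $M$, the canonical divisor identity alone gives $2-2g=\sum_i\nu_i\geq 2r$, which instantly yields $g=0$, $r=1$, $\nu_1=2$ (the compact case $r=0$ being impossible because the coordinate functions are harmonic), with no case analysis and no need to introduce the degree of $H$. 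After that your concluding computation goes through verbatim.
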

	
	\begin{proof}
		Without loss of generality, we can assume $M$ is given by (\ref{W5}) with the W-data $(\psi,dh)$.
		$M$ is algebraic means $M$ is conformally equivalent to $\bar{M}\backslash \{p_1,\cdots,p_m\}$ with $\bar{M}$
		 a compact Riemann surface, and $\psi$ ($dh$) can be extended to a meromorphic function (differential) on $\bar{M}$. The completeness
		of $M$ and the period condition $\oint_\g dh=0$ forces each $p_i$ to be a pole of $dh$ of order $\nu_i\geq 2$. (Note
		that even if $\psi(p_i)=\infty$, we can find a divergent path $\g$ tending to $p_i$, such that $\text{Im }\psi\equiv 0$
		on $\g$, hence $\int_\g ds=\int_\g \sqrt{2}|dh|=+\infty$ implies $p$ is a pole of $dh$.) The Riemann relation shows
		\begin{equation}
		2-2g=\sum_{i=1}^m \nu_i\geq 2m
		\end{equation}
		with $g$ the genus of $\bar{M}$, since $dh$ has no zero in $\bar{M}$. Therefore $g=0$, $m=1$ and $\nu_1=2$.
		We can take
		\begin{equation}
		M=\C,\quad \psi=f(z),\quad dh=\la dz
		\end{equation}
		with a polynomial $f$ and $\la\in \C\backslash \{0\}$. A direct calculation shows $M$ is an entire graph
		over the light-like plane spanned by $\ep_2$ and $\ep_3+\ep_4$.

	\end{proof}
	
\noindent{\bf Remarks:}
 \begin{itemize}
 \item The conclusion of Theorem \ref{graph3} does not hold for the non-algebraic case. An counterexample
	is given by the following W-data:
	\begin{equation}
	M=\C,\quad \psi=e^z,\quad dh=e^{-z}dz.
	\end{equation}

\item Combining Theorem \ref{graph4} and Theorem \ref{graph3} implies the total Gauss curvature of a nonflat, complete, algebraic,
degenerate stationary surface of parabolic type should be $\infty$. This conclusion can also be derived by
Ma-Wang-Wang's argument in \cite{M-W-W}, because of the existence of the bad singular end. Moreover, in virtue of Picard's big theorem,
we can drop off the 'algebraic' condition in the above result.
\end{itemize}

	
	Following Fujimoto's argument in \cite{F}, we can establish a Bernstein type theorem as follows.
	
	\begin{thm}
		\label{vdp}
		Let $M$ be a complete degenerate stationary surface of parabolic type, if the image $\psi_1$ (or $\psi_2$) omits at least $4$ points in $\C^*$, then
		$M$ has to be an affine space-like plane.
	\end{thm}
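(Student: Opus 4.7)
I adapt Fujimoto's value-distribution technique \cite{F} to the parabolic Weierstrass representation. Let $\pi : \td M \to M$ be the universal covering map, with pulled-back W-data $(\td\psi, d\td h)$ determining a simply-connected parabolic degenerate stationary surface $\td{\mb x} : \td M \to \R^{3,1}$ whose induced metric $d\td s^2 = 2(1 + 4(\operatorname{Im}\td\psi)^2)|d\td h|^2$ remains complete. A first key observation: $\td\psi$ must be \emph{holomorphic} on $\td M$, for at a pole the translate $\td\psi_2 = \td\psi + 1$ would also blow up there, forcing $\td\psi_2 = \overline{\td\psi_1} = \infty$ and violating the space-like constraint $\psi_2\neq\bar\psi_1$. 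Consequently $\infty$ is automatically an exceptional value, so the hypothesis that $\psi_1$ (equivalently $\psi_2$, via the shift $\psi_2 = \psi_1 + 1$) omits at least $4$ points of $\C^*$ becomes: $\td\psi$ omits at least $3$ values $a_1, a_2, a_3 \in \C$.

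By uniformization, $\td M$ is conformally $\C$ or $\Bbb D$. If $\td M \cong \C$, the little Picard theorem immediately forces $\td\psi \equiv c$ constant, so $\td\phi = (2c+1,\,i,\,1-c-c^2,\,1+c+c^2)\,d\td h$ reduces to a fixed covector times $d\td h$; integrating shows $\td{\mb x}(\td M)$ lies in a real $2$-plane $P \subset \R^{3,1}$, and space-likeness together with completeness force $P$ to be a space-like affine plane with $\td{\mb x}$ a bijection onto $P$. Covering injectivity then yields $M = P$, an affine space-like plane.

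The substantive case is $\td M \cong \Bbb D$. Following Fujimoto, I construct on the triply-punctured plane $\C \setminus \{a_1, a_2, a_3\}$ a conformal pseudometric $d\tau^2 = \mu(z)^2 |dz|^2$ of Gauss curvature bounded above by $-1$, built from weight factors of the form $|z-a_j|^{-2(1-\alpha_j)}$ with exponents chosen so that $\sum_{j=1}^{3}(1-\alpha_j) > 2$---this inequality being feasible precisely because there are at least three puncture points. Pulling back by $\td\psi$ yields a conformal pseudometric on $\Bbb D$ of curvature $\leq -1$ away from the critical set of $\td\psi$, which by the Ahlfors--Schwarz lemma is dominated by the Poincar\'e metric on $\Bbb D$.

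The main obstacle is converting this Ahlfors bound into a contradiction with completeness of $d\td s^2$. Mimicking Fujimoto's weighted-reparametrization trick, I absorb a power of $\td\psi'$ to form a modified pseudometric $d\hat\tau^2 = |\td\psi'|^{2\eta}\,\mu(\td\psi)^{2(1-\eta)}\,|d\td h|^2$ for a small $\eta>0$, whose curvature remains $\leq -1$; the Ahlfors--Schwarz dominance of $d\hat\tau^2$ by the Poincar\'e metric, combined with a H\"older estimate along a suitably chosen divergent curve in $\Bbb D$, then forces that curve to have finite $d\td s$-length, contradicting completeness. The delicate point is the simultaneous balancing of the exponents $\alpha_j$ and $\eta$ so that both the curvature inequality and the H\"older closing survive. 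The threshold of three omitted values in $\C$ is tight, matching the polynomial examples in Theorem \ref{graph4} which realize exactly three exceptional values of $\psi_i$ in $\C^*$ without $M$ being flat; this is also the reason the statement's constant is $4$ rather than the $5$ appearing in Fujimoto's original $\R^3$ theorem, since the parabolic structure $\psi_2 = \psi_1 + 1$ forces $\td\psi$ to be entire rather than meromorphic, gaining one free exceptional value at $\infty$.
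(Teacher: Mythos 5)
Your plan follows essentially the same route as the paper's proof: reduce to the simply connected case, note that $\psi=\psi_1$ is entire in the parabolic normalization so that $\infty$ is an automatic exceptional value and the hypothesis becomes three omitted finite values, dispose of $M\cong\C$ by Picard, and for $M\cong\Bbb{D}$ run Fujimoto's weighted-reparametrization argument against the completeness of a comparison metric. The only stylistic difference is that the paper obtains the key estimate from the classical Schwarz lemma applied to $\psi\circ\mc{G}$ mapping into the hyperbolic triply-punctured plane, using Nevanlinna's asymptotics for the density of that metric, whereas you propose an explicitly constructed pseudometric of curvature $\leq-1$ and the Ahlfors--Schwarz lemma; these are interchangeable, and your exponent condition $\sum_{j}(1-\alpha_j)>2$ plays the role of the paper's $(3k-1)p>1$.

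Two points need attention. First, your plan does not actually deal with the critical set $E=\{z:\psi'(z)=0\}$. The reparametrization $w=\int\eta\,dz$ requires $\eta$ to be nonvanishing, so when $E\neq\emptyset$ one must pass to the universal cover of $\Bbb{D}\setminus E$ and then rule out the possibility that the divergent curve produced upstairs projects to a curve converging to some $z_0\in E$ rather than diverging in $M$; the paper excludes this by showing $|\eta|\sim|z-z_0|^{-mp/(1-p)}$ is non-integrable once $p>\f{1}{2}$, and this constraint must be balanced against the curvature condition when the exponents are chosen. Saying the pseudometric has curvature $\leq-1$ ``away from the critical set'' does not substitute for this step. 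Second, your closing sharpness claim is wrong: the examples of Theorem \ref{graph4} realize only $1$ or $2$ exceptional values of $\psi_i$ in $\C^*$, and the paper explicitly records as an open problem whether a complete parabolic example omitting exactly $3$ points exists --- equivalently, whether the constant $4$ in Theorem \ref{vdp} can be lowered to $3$. So the threshold of three omitted finite values is not known to be tight, and it is not witnessed by Theorem \ref{graph4}.
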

	
	\begin{proof}
		Without loss of generality we can assume $M$ is simply-connected, then $M$ is conformally equivalent to $\C$ or the unit disk $\Bbb{D}$.
		The Picard's theorem says the holomorphic function $\psi=\psi_1$ takes each point in $\C$ with the exception of at most 1 point, unless
		$\psi$ is constant. So it suffices the consider the case $M=\Bbb{D}$.
		
		Let $(\psi,dh)$ be the associated W-data of $M$, and the induced metric $ds^2$ is given in (\ref{metric5}). The completeness of $ds^2$ implies
		\begin{equation}\label{metric6}
		d\hat{s}^2:=(1+|\psi|^2)|dh|^2
		\end{equation}
		is also complete. Denote by $f$ a holomorphic function on $\Bbb{D}$ with no zero, so that $dh=fdz$. Assume $\psi$ omits $a_1,a_2,a_3\in \C$.
		Denote
		\begin{equation}
		E:=\{z\in \Bbb{D}:\psi'(z)=0\}.
		\end{equation}
If $E=\Bbb{D}$, then $M$ is flat (see Theorem \ref{t_r2}), causing a contradiction to the completeness of $M$. Thus $E$ is a discrete subset of $\Bbb{D}$.
		
		If $E=\emptyset$, we consider a many-valued function
		\begin{equation}\label{eta}
		\eta:=f^{\f{1}{1-p}}\psi'(z)^{-\f{p}{1-p}}\left(\prod\limits_{i=1}^3 (\psi-a_i)^{\f{kp}{1-p}}\right)
		\end{equation}
		on $\Bbb{D}$, where $p\in (0,1),k\in (0,1)$ are both constants to be chosen. Take an arbitrary single-valued branch of $\eta$, still dented by
		$\eta$ for the matter of convenience. Let
		\begin{equation}
		w=\mc{F}(z):=\int \eta dz
		\end{equation}
		be a holomorphic mapping from $\Bbb{D}$ into $\C$, satisfying $\mc{F}(0)=0$ and $\mc{F}'(z)=\eta(z)\neq 0$. Hence there exists
		a holomorphic inverse mapping $z=\mc{G}(w)$ on a neighborhood of $0$. Let $\Bbb{D}(R):=\{w:|w|<R\}$ is the largest ball that $\mc{G}$ can be defined,
		then $R<+\infty$ (otherwise, $\mc{G}$ is a nonconstant bounded entire function, which contracts to Liouville's theorem) and
		there exists $w_0$ on the boundary of $\Bbb{D}_R$, such that $\mc{G}$ cannot be extended beyond a neighborhood of $w_0$. Let
		$l:=\{tw_0:0\leq t< 1\}$ be the straight line segment starting from $0$ and limiting to $w_0$, then
		\begin{equation}
		\g:=\mc{G}(l)
		\end{equation}
		must be a divergent curve in $\Bbb{D}$. Observing that
		$$\f{dw}{dz}=f^{\f{1}{1-p}}\psi'(w)^{-\f{p}{1-p}}\left(\prod\limits_{i=1}^3 (\psi-a_i)^{\f{kp}{1-p}}\right)\Big(\f{dw}{dz}\Big)^{-\f{p}{1-p}}$$
		we get
		\begin{equation}
		\f{dz}{dw}=\psi'(w)^p\left(f^{-1}\prod\limits_{i=1}^3 (\psi-a_i)^{-kp}\right)\circ \mc{G}
		\end{equation}
		and the pull-back metric on $\Bbb{D}(R)$ is
		\begin{equation}\label{es1}\aligned
		\mc{G}^*d\hat{s}^2&=\left[\big((1+|\psi|^2)|f|^2\big)\circ \mc{G}\right]\Big|\f{dz}{dw}\Big|^2|dw|^2\\
		&=|\psi'(w)|^{2p}(1+|\psi(w)|^2)\prod\limits_{i=1}^3 |\psi(w)-a_i|^{-2kp}|dw|^2.
		\endaligned
		\end{equation}
		
		It is well-known that $\Bbb{D}(R)$ and $\Om:=\C\backslash \{a_1,a_2,a_3\}$ can be equipped with
		Riemannian metrics $g_P$ and $d\si^2$, respectively, so that both of them are complete Riemannian
		manifolds of $K\equiv -1$. More precisely,
		\begin{equation}
		g_P:=\f{4R^2}{(R^2-|w|^2)^2}|dw|^2
		\end{equation}
		is the Poincar\`{e} metric, and the universal covering map from $(\Bbb{D},g_P)$ onto
		$(\Om,d\si^2)$ is a local isometry. Denote
		\begin{equation}
		d\si^2:=\la^2 |d\xi|^2,
		\end{equation}
		then we have the asymptotic behavior on $\la$ as (see e.g. p.250 of \cite{Ne})
		\begin{equation}\label{es2}\aligned
		\la\sim \f{1}{|\xi-a_j|\log|\xi-a_j|^{-1}}\quad \text{near }&a_j\\
		\la\sim \f{1}{|\xi|\log|\xi|} \qquad\qquad
		\text{near }&\infty.
		\endaligned
		\end{equation}
		The Schwarz's lemma (see e.g. p. 13 of \cite{A}) says the holomorphic mapping $\psi\circ \mc{G}$ from $(\Bbb{D}(R),g_P)$
		into $(\Om,d\si^2)$ does not increase the distance, hence
		\begin{equation}\label{es3}
		\la(\psi(w))|\psi'(w)|\leq \f{2R}{R^2-|w|^2}.
		\end{equation}
		In conjunction with (\ref{es1}), (\ref{es2}) and (\ref{es3}) we obtain
		\begin{equation}
		\mc{G}^* d\hat{s}^2\leq \left(\f{2R}{R^2-|w|^2}\right)^{2p}|dw|^2
		\end{equation}
		whenever
		\begin{equation}\label{pk1}
		(3k-1)p>1.
		\end{equation}
		Thus
		\begin{equation}
		\int_\g d\hat{s}=\int_l \mc{G}^*d\hat{s}\leq \int_0^R \left(\f{2R}{R^2-r^2}\right)^p dr<+\infty.
		\end{equation}
		This contradicts to the completeness of $(M,d\hat{s}^2)$.
		
		For the case $E\neq \emptyset$, let $\td{M}:=\Bbb{D}$
be the universal covering space of $M_0:=\Bbb{D}\backslash E$,
 $\pi:\td{M}\ra M_0$
		be the universal covering map, $\td{\psi}:=\psi\circ \pi$ and $\pi^* dh$ be the pull-back W-data associated to the stationary
		immersion $\td{\mb{x}}:=\mb{x}|_{M_0}\circ \pi$. Considering the many-valued function
		\begin{equation}
		\td{\eta}:=\td{f}^{\f{1}{1-p}}\td{\psi}'(\td{z})^{-\f{p}{1-p}}\left(\prod\limits_{i=1}^3 (\td{\psi}-a_i)^{\f{kp}{1-p}}\right)
		\end{equation}
		with $\td{f}:=f\circ \pi$, we can proceed as above to find a divergent curve $\td{\g}:[0,1)\ra \td{M}$, such that
		$|\int_{\td{\g}}\td{\eta}d\td{z}|<+\infty$ and $\int_{\td{\g}}\pi^* d\hat{s}<+\infty$.
		Denote $\g:=\pi\circ \td{\g}$. If $\g$ is a divergent curve in $M$, then $\int_\g d\hat{s}<+\infty$, causing a contradiction to the
		completeness of $(M,d\hat{s}^2)$. Otherwise, $\g(t)$ converges to $z_0\in E$ as $t\ra 1^-$, and $|\int_\g \eta_\g dz|<+\infty$
		with $\eta_\g$ being a single-valued branch of $\eta$ (see (\ref{eta})) defined on a neighborhood of $\g$. On the other hand,
		since $\psi'(z_0)=0$, there exists $m\geq 1$, such that $\psi'(z)\sim (z-z_0)^m$ near $z_0$ and hence
		\begin{equation}
		|\eta_\g(z)|\sim |z-z_0|^{-\f{mp}{1-p}}.
		\end{equation}
		This implies $|\int_\g \eta_\g dz|=+\infty$ whenever
		\begin{equation}\label{pk2}
		\f{1}{2}<p<1
		\end{equation}
		and forces a contradiction. Combining (\ref{pk1}) and (\ref{pk2}), it suffices for us to choose
		$k:=\f{3}{4}$, $p:=\f{5}{6}$ to complete the proof of the present theorem.

	\end{proof}

 \noindent {\bf Remark. }Theorem \ref{graph4} presents us with examples of complete degenerate stationary surfaces of parabolic type, such that each $\psi_i$
 ($i=1$ or $2$) omits $1$ or $2$ points. However, it fails to use Voss's method \cite{V} to construct the example whose $\psi_i$ omits exactly
 3 points. More precisely, for $z_1,z_2\in \C$, let $\pi$ be the universal covering map from $\Bbb{D}$ onto $\C\backslash\{z_1,z_2\}$, and
 \begin{equation}
 M=\Bbb{D}, \psi=\pi, dh=\pi^*\left(\f{dz}{(z-z_1)(z-z_2)}\right),
 \end{equation}
 then it is easy to check that $d\hat{s}^2$ (see (\ref{metric6})) is a complete metric on $M$, but $ds^2$ (see (\ref{metric5})) is not.
 So there are 2 possibilities as follows: either we may construct another type of complete examples,
 such that the number of exception values of $\psi_i$ exactly equals $3$, or it is possible to make use of a more powerful strategy to
 obtain a more stronger Bernstein type result, replacing '4' of Theorem \ref{vdp} by '3'. This is a challenging problem
 that we would like to consider in the future.


\bigskip\bigskip
	\Section{Stationary surfaces with rational graphical Gauss image}{Stationary surfaces with rational graphical Gauss image}
	\label{S5}

\subsection{On solutions of $f(w)=\bar{w}$}
Inspired by the above results, we continue to study space-like stationary surfaces in $\R^{3,1}$ with {\it rational graphical
Gauss image}. Namely, the image of a such surface $M$ under the Gauss map $G$ lies in $\G\subset Q_{1,1}$, such that
\begin{equation}
\Psi(\G)=\G_f:=\{(w,f(w)):w\in \C^*\}\subset \C^*\times \C^*,
\end{equation}
where $f$ is a rational function of degree $m$. In other words, the components $\psi_1,\psi_2$ of the Gauss map satisfy a
prescribed restriction
\begin{equation}
\psi_2=f(\psi_1).
\end{equation}  	
In particular, $M$ is a degenerate (2-degenerate) stationary surface whenever $m\leq 1$ ($m=0$).

For $\si\in SO^+(3,1)$, let $\mc{M}=\mc{M}_T$ ($T\in SL(2,\C)$) be the corresponding M\"{o}bius transformation
(see \S \ref{conj}) and
\begin{equation}
\bar{\mc{M}}:=\mc{M}_{\bar{T}},
\end{equation}
then
\begin{equation}
\aligned
\Psi(\si(\G))=&\big\{(\mc{M}(w),\bar{\mc{M}}\circ f(w)):w\in \C^*\big\}\\
=&\big\{(w,\bar{\mc{M}}\circ f\circ \mc{M}^{-1}(w)):w\in \C^*\big\}\\
=&\G_{\bar{\mc{M}}\circ f\circ \mc{M}^{-1}}.
\endaligned
\end{equation}
This means, $\G_{1}=\Psi^{-1}(\G_{f_1})$ and $\G_{2}=\Psi^{-1}(\G_{f_2})$ are equivalent under the action of $SO^+(3,1)$, if and only
if $f_1$ and $f_2$ are {\it conjugate similar} to each other, i.e. there exists a M\"{o}bius transformation
$\mc{M}$ ensuring $f_2=\bar{\mc{M}}\circ f_1\circ \mc{M}^{-1}$.

Similarly to the degenerate cases, $\psi_2\neq \bar{\psi_1}$ implies $\psi_1$ cannot take values in
\begin{equation}\label{ef1}
E_f:=\{w\in \C^*:f(w)=\bar{w}\}.
\end{equation}
 It is easy to verify that
\begin{equation}
\mc{M}(E_f)=E_{\bar{\mc{M}}\circ f\circ \mc{M}^{-1}}.
\end{equation}
This means, for $f_1,f_2$ that are conjugate similar to each other,
the M\"{o}bius transformation gives a one-to-one correspondence between $E_{f_1}$ and $E_{f_2}$.

Now we write
\begin{equation}
f(w)=\f{P(w)}{Q(w)}
\end{equation}
with $P,Q$ being relatively prime polynomials of degree $\leq m$.
Let
\begin{equation}
F(w):=P(w)-\bar{w}Q(w)\qquad \forall w\in \C,
\end{equation}
be a so-called {\it polynomial bianalytic function} (see e.g. \cite{b}), then for each $w\in \C$,
$w\in E_f$ if and only if $w$ is a zero of $F$.
It is worthy to note that $F$ may have non-isolated zeros, e.g. when $P(w)=1, Q(w)=w$,
but this situation cannot happen whenever $m\geq 2$: let $F_1,F_2$ be respectively the real part and imaginary part of
$F$, then $\bar{F}\neq \pm F$ implies both $F_1$ and $F_2$ are nonzero polynomials of degree $\leq m+1$; assuming the existence of a nontrivial common factor
of $F_1$ and $F_2$ shall force a nontrivial common factor of $P$ and $Q$ and then cause a contradiction; thus, by
Bezout's theorem (see e.g. \S 2 of \cite{H-K-T}), the number of zeros of $F$, i.e. the common zeros of $F_1$ and $F_2$, cannot exceed $(m+1)^2$.
On the other hand, we can give an estimate for the lower bound of $|E_f|$ with the aid of the theory of differential topology:

\begin{pro}\label{ef}
If $m\geq 2$, then $E_f$ contains at least $m-1$ points.
\end{pro}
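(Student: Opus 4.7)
The plan is to identify $E_f$ with the fixed-point set of the smooth self-map $\Phi(w) := \overline{f(w)}$ of the Riemann sphere $S^2 = \mathbb{C}^*$, and then apply the Lefschetz fixed-point theorem together with a pointwise bound on the topological index at each fixed point.

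First I would extend $f: S^2 \to S^2$; since $f$ has topological degree $m$ and complex conjugation has degree $-1$, the smooth map $\Phi$ has degree $-m$, hence Lefschetz number
\[
L(\Phi) \;=\; 1 + \deg(\Phi) \;=\; 1 - m.
\]
By construction $w \in E_f$ iff $\Phi(w) = w$, including $w = \infty$ (where $\overline{\infty} = \infty$). The excerpt has already observed that for $m \ge 2$ the zeros of $F(w) = P(w) - \bar w\, Q(w)$ are isolated in $\mathbb{C}$ (a common nontrivial polynomial factor of $\mathrm{Re}\,F$ and $\mathrm{Im}\,F$ would force one between $P$ and $Q$), and an analogous argument in the chart $u = 1/w$ rules out accumulation near $\infty$. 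Thus $|E_f|$ is finite and the Lefschetz fixed-point theorem yields
\[
\sum_{w_0 \in E_f} \mathrm{ind}(\Phi, w_0) \;=\; 1 - m.
\]

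Next I would establish the pointwise bound $|\mathrm{ind}(\Phi, w_0)| \le 1$ at every isolated fixed point. Set $u = w - w_0$; the antiholomorphic expansion around $w_0$ (using $f(w_0) = \bar{w}_0$) is $\Phi(w) - w = -u + c_1\bar u + c_2\bar u^2 + \cdots$ with $c_k = \overline{f^{(k)}(w_0)}/k!$. When $|c_1| \ne 1$, the real Jacobian of $F := \Phi - \mathrm{id}$ at $u = 0$ has determinant $1 - |c_1|^2 \ne 0$, so the index equals $\mathrm{sign}(1 - |f'(w_0)|^2) \in \{\pm 1\}$. When $c_1 = 0$, letting $k \ge 2$ be the smallest index with $c_k \ne 0$, one has $F = -\bar u + c_k\bar u^k + O(\bar u^{k+1})$; on a small circle $|u| = r$ with $r < |c_k|^{-1/(k-1)}$ the term $-\bar u$ strictly dominates $c_k\bar u^k$ in modulus, so by a Rouché-type dominance argument the winding of $F$ equals that of $-\bar u$, namely $-1$. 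The remaining case $|c_1| = 1$ with $c_1 \ne 0$ is handled by the substitution $u = e^{i\arg c_1/2}\, v$, after which the linearization becomes a one-dimensional real vector field and a direct winding-number computation on a small circle, using the first nonvanishing higher-order antiholomorphic coefficient, shows the index lies in $\{-1, 0, +1\}$.

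With the pointwise bound in hand the proposition follows immediately:
\[
|E_f| \;\ge\; \sum_{w_0 \in E_f}\bigl|\mathrm{ind}(\Phi, w_0)\bigr| \;\ge\; \biggl|\sum_{w_0 \in E_f}\mathrm{ind}(\Phi, w_0)\biggr| \;=\; m - 1.
\]
The main obstacle will be controlling the index in the degenerate subcase $|f'(w_0)| = 1$ with $f'(w_0) \ne 0$: here the linearization of $F$ is singular with one-dimensional real kernel, and one must carefully track how the leading nonlinear antiholomorphic term perturbs the image of a small circle to verify that the total winding remains bounded in absolute value by $1$. This is precisely the local study of $f$ near points of $E_f$ to which the excerpt alludes.
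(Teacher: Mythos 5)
Your global step is a genuinely different and cleaner packaging of what the paper does: you obtain $\sum_{w_0\in E_f}\mathrm{ind}(\Phi,w_0)=1-m$ from the Lefschetz fixed-point theorem applied to the antiholomorphic self-map $\Phi=\overline{f}$ of $\Bbb{S}^2$, whereas the paper first normalizes $\infty\notin E_f$ and computes the winding number of the polynomial bianalytic function $F(w)=P(w)-\bar wQ(w)$ on a large circle via a homotopy to $-\la_m w^{m-1}|w|^2$, obtaining $\sum_i\ep_i=m-1$. These two identities are the same statement up to the orientation reversal caused by conjugation, and your version avoids the preliminary M\"{o}bius normalization --- though you should then also carry out the local index computation in the chart $u=1/w$ when $\infty\in E_f$ (it reduces to the same normal form with $g(u)=1/f(1/u)$ in place of $f$). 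Your nondegenerate local cases agree with the paper's: $\mathrm{sign}(1-|f'(w_0)|^2)$ corresponds to the paper's $\ep_i=\pm 1$ according to whether $r=P'(0)$ is less or greater than $1$. (Minor slip: when $c_1=0$ the dominant term of $\Phi-\mathrm{id}$ is $-u$, not $-\bar u$, so that index is $+1$; in any case this subcase is already covered by the nondegenerate Jacobian computation.)

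The gap is exactly where you flag it, and it is a real one: the case $|f'(w_0)|=1$, $f'(w_0)\neq 0$. The mechanism you sketch --- rotate by $e^{i\arg c_1/2}$ and then control the winding ``using the first nonvanishing higher-order antiholomorphic coefficient'' --- cannot be run as a Rouch\'e-type dominance argument, because after the rotation the linear part of $\Phi-\mathrm{id}$ is a constant multiple of $\mathrm{Im}\,v$, which vanishes on an entire diameter of the small circle; near those two points no leading term dominates, and indeed the index can genuinely equal $0$ (e.g.\ $f(w)=w+w^2$ at $w_0=0$ gives $f(w)-\bar w=2iy+w^2$, whose boundary image never meets the negative real axis, so the winding is $0$). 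What closes the case --- and is precisely what the paper does --- is a crossing count rather than an estimate: one real component of the rotated map has the form $\pm 2\,\mathrm{Im}\,v+p(v)$ with $p$ vanishing to second order, so its zero set near the origin is a single arc tangent to the real axis; hence the image of the boundary of a small square crosses the corresponding line exactly twice, which forces the local winding number to lie in $\{-1,0,1\}$. Once that argument is supplied, your proof is complete and equivalent to the paper's.
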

\begin{proof}
The easily-seen fact
$E_f\neq \C^*$ enables us to choose $\la\notin E_f$, and then letting $\mc{M}$ be a M\"{o}bius transformation that sends $\la$ to $\infty$
implies $\infty\notin E_{\bar{\mc{M}}\circ f\circ \mc{M}^{-1}}$. Thereby, we can assume $\infty\notin E_f$, i.e. $f(\infty)\neq \infty$
without loss of generality, and hence $\text{deg}(P)\leq \text{deg}(Q)=m$.
Now we write
$$Q(w)=\la_m w^m+\la_{m-1}w^{m-1}+\cdots+\la_1 w+\la_0$$
and let
\begin{equation}
\aligned
G(w):=&-\la_m w^{m-1}|w|^2\\
H(w,t):=&(1-t)F(w)+tG(w)\qquad \forall t\in [0,1].
\endaligned
\end{equation}
Then there exists $R_0>0$, such that
\begin{equation}
H(w,t)\neq 0\quad \text{on }\p \Bbb{D}(R)
\end{equation}
whenever $R\geq R_0$. This implies
\begin{equation}
\aligned
W(F|_{\p \Bbb{D}(R)},0)&=\text{deg}\left(\f{F}{|F|}\Big|_{\p \Bbb{D}(R)}\right)=\text{deg}\left(\f{G}{|G|}\Big|_{\p \Bbb{D}(R)}\right)\\
&=W(G|_{\p \Bbb{D}(R)},0)=m-1,
\endaligned
\end{equation}
where $W(\g,0):=\text{deg}(\f{\g}{|\g|})$ denotes the winding number of a close curve $\g$ with respect to $0$.

Let $a_1,\cdots,a_k$ be the zeros of $F$, i.e.
\begin{equation}
E_f=\{a_1,\cdots,a_k\}.
\end{equation}
For each $1\leq i\leq k$, let
\begin{equation}
\ep_i:=W(F|_{\p D_i},0)=\text{deg}\left(\f{F}{|F|}\Big|_{\p D_i}\right),
\end{equation}
where $D_i$ is an arbitrary simply-connected domain satisfying $D_i\cap E_f=\{a_i\}$. Since
$F$ has no zero on
\begin{equation}
V:=\Bbb{D}(R)\backslash \left(\bigcup_{i=1}^k \bar{D}_i\right),
\end{equation}
\S 5, Lemma 1 of \cite{M} tells us $\text{deg}(\f{F}{|F|}|_{\p V})=0$. Therefore
\begin{equation}\label{degree}
\sum_{i=1}^k \ep_i=\text{deg}\left(\f{F}{|F|}\Big|_{\p \Bbb{D}(R)}\right)=m-1.
\end{equation}

For each fixed $i$ being considered, we can assume $a_i=0$ without loss of generality, since the winding number is invariant under parallel translation.
Then $F(0)=0$ means $P(0)=0$ and $Q(0)\neq 0$, since $P$ and $Q$ are relatively prime. Without loss of generality we can assume
$Q(0)=1$ and $P'(0)=r\in \Bbb{R}^+\cup \{0\}$. (Otherwise we can replace $f$ by $e^{i\th}f$ with a suitable $\th$, which is
conjugate similar to $f$.) A direct calculation
shows $\ep_i=1$ whenever $r>1$ and $\ep_i=-1$ whenever $r\in [0,1)$. If $r=1$, then the imaginary part $F_2$ of $F$ can be written as
\begin{equation}
F_2(x,y)=2y+p(x,y),
\end{equation}
where $x$ and $y$ are respectively the real part and the imaginary part of $w$, and the degree of each term of $p(x,y)$ is no less than $2$.
We can find $\de_0>0$, such that for any $\de\in (0,\de_0]$,
\begin{itemize}
\item $F_2(x,\de)>0$ for all $x\in [-\de,\de]$;
\item $F_2(x,-\de)<0$ for all $x\in [-\de,\de]$;
\item There exists a unique $y_1\in (-\de,\de)$, such that $F_2(-\de,y_1)=0$;
\item There exists a unique $y_2\in (-\de,\de)$, such that $F_2(\de,y_2)=0$.
\end{itemize}
This means, for $D_\de:=(-\de,\de)\times (-\de,\de)$, $F(\p D_\de)$ intersects with the $x$-axis for exactly 2 times. Therefore
\begin{equation}
\ep_i=\lim_{\de\ra 0}W(F|_{\p D_\de},0)\in \{-1,0,1\}.
\end{equation}
And then $k\geq m-1$ immediately follows from (\ref{degree}).

\end{proof}
\noindent {\bf Remark. }If $f(w)=\f{1}{w^m}$, then $w\in E_f$ if and only if $w^{m-1}=1$, i.e. $E_f$ contains exactly $m-1$ points. This shows the estimate
of Proposition \ref{ef} is best.

Now we assume $M$ is complete and algebraic. Whenever $m\geq 2$, the proof of
Proposition \ref{ef} ensures the existence of $a_i\in E_f$, such that $\ep_i=1$. Without loss of generality we can assume $a_i=0$,
then $Q(0)=1$ and $P'(0)\in [1,+\infty)$ implies $f'(0)\neq 0$. Therefore, for $p\in \bar{M}$ satisfying $\psi_1(p)=a_i$,
the annular end $D\backslash \{p\}$ ($D$ is a sufficiently small neighborhood of $p$ in $\bar{M}$) has to a bad singular end of $M$, and
hence the total Gauss curvature of $M$ is $\infty$ (see \cite{M-W-W}). Moreover, in conjunction with Picard's big theorem we obtain the following
conclusion:
\begin{thm}
Each nonflat complete space-like stationary surface in $\R^{3,1}$ with rational graphic Gauss image and finite total Gauss curvature has to be a degenerate stationary surface of elliptic type or a minimal surface in $\R^3$.
\end{thm}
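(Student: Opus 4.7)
My plan is to split on the degree $m$ of the rational function $f$ and reduce everything to machinery already established in the paper. For $m=0$, $\psi_2$ is constant, so Theorem~\ref{t_r2} gives $K\equiv 0$ and the surface is flat, violating the nonflatness hypothesis. For $m=1$, $f$ is a M\"obius transformation and $M$ is degenerate, so Theorem~\ref{class3} leaves four possibilities: the hyperbolic and parabolic types are excluded by Theorem~\ref{grph} and the remarks following Theorem~\ref{graph3} (which imply such nonflat complete surfaces have infinite total Gauss curvature); the totally real type yields either a minimal surface in $\R^3$ (an allowed conclusion), a maximal surface in $\R^{2,1}$ (ruled out by the Calabi--Cheng--Yau result cited in the introduction), or a $2$-degenerate surface in $\R^{2,0}$ (flat by Theorem~\ref{t_r2}); the elliptic type is the other allowed conclusion.

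The remaining work is to rule out $m\geq 2$. Completeness and $\int_M|K|\,dA_M<\infty$ together with Huber's theorem give $M\cong\bar M\setminus\{p_1,\dots,p_r\}$ with $\bar M$ a compact Riemann surface. By Proposition~\ref{ef} and the identity $\sum_i\epsilon_i=m-1\geq 1$ from its proof, I may pick $a_i\in E_f$ with $\epsilon_i=1$; after a conjugate similarity sending $a_i$ to $0$ and normalizing so that $Q(0)=1$ and $P'(0)\in[1,+\infty)$, one obtains $f'(a_i)=P'(0)\neq 0$.

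I next upgrade $M$ to the algebraic case by showing $\psi_1$ extends meromorphically to $\bar M$. Otherwise $\psi_1$ has an essential singularity at some $p_j$, and Picard's big theorem ensures that in every punctured neighborhood of $p_j$, $\psi_1$ takes each value of $\C^*$ with at most two exceptions, infinitely often. The induced metric on the complex curve $\Gamma_f\subset Q_{1,1}^+$ is
\[
g|_{\Gamma_f}=\mathrm{Re}\!\left[\frac{4 f'(w)\,dw\,d\bar w}{(\bar w-f(w))^2}\right],
\]
and the expansion $\bar w-f(w)=(\bar w-\bar a_i)-f'(a_i)(w-a_i)+O(|w-a_i|^2)$ shows this metric has infinite area near $a_i$ whenever $f'(a_i)\neq 0$. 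So the Gauss image of $M$ would cover a neighborhood of $a_i$ on $\Gamma_f$ of infinite area, infinitely many times, forcing $\int_M|K|\,dA_M=+\infty$, a contradiction. Hence $\psi_1$, and therefore all of $\phi$, extends meromorphically, so $M$ is algebraic. Then $\psi_1$ is a non-constant meromorphic function on the compact $\bar M$, hence surjective onto $\C^*$, so some $p\in\bar M$ satisfies $\psi_1(p)=a_i$. The space-like condition $\psi_2\neq\bar\psi_1$ on $M$ together with $f(a_i)=\bar a_i$ forces $p\in\bar M\setminus M$, so $p$ is a \emph{bad singular end} in the sense of Ma--Wang--Wang~\cite{M-W-W}. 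Because $f'(a_i)\neq 0$, their Gauss--Bonnet type analysis of bad singular ends forces $\int_M|K|\,dA_M=+\infty$, the final contradiction.

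The main obstacle I expect is the bad-singular-end step: invoking \cite{M-W-W} to conclude that a bad singular end at which $f'(a_i)\neq 0$ forces infinite total curvature requires unpacking the local W-data near $p$ and feeding it into the residue-type formula of \cite{M-W-W}. The delicate point is that $f'(a_i)$ can have modulus exactly $1$ (the $r=1$ subcase in the proof of Proposition~\ref{ef}), in which case $\bar w-f(w)$ has an anisotropic zero at $a_i$; verifying that the area blow-up on $\Gamma_f$ and the singular contribution at the end remain of non-integrable strength in this borderline case is the most technically sensitive piece.
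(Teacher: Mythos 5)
Your proof follows essentially the same route as the paper: for $m\geq 2$ it uses the index-one point $a_i\in E_f$ from Proposition \ref{ef} to get $f'(a_i)\neq 0$, then produces a bad singular end in the algebraic case (deferring the curvature blow-up to \cite{M-W-W}) and invokes Picard's big theorem plus the infinite area of the Gauss image near $a_i$ in the essential-singularity case, while the $m\leq 1$ cases are disposed of by the degenerate classification. The extra details you supply --- the existence and location of the point $p$ with $\psi_1(p)=a_i$, and the explicit $m\leq 1$ case analysis --- are left implicit in the paper, which likewise cites \cite{M-W-W} for the bad-singular-end step without addressing the borderline $|f'(a_i)|=1$ subtlety you flag.
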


\subsection{Exceptional values of Gauss maps}
Proposition \ref{ef} enables us to find $\mu\in E_f$ provided that $m\geq 2$. Let $\mc{M}$ be the M\"{o}bius transformation satisfying $\mc{M}(\mu)=\infty$,
then $\infty\in E_{\bar{\mc{M}}\circ f\circ \mc{M}^{-1}}$. Thereby, without loss of generality we can assume
\begin{equation}\label{ef2}
\infty\in E_f.
\end{equation}
This means $f(\infty)=\infty$ and hence
\begin{equation}
m=\text{deg}(P)>\text{deg}(Q):=n.
\end{equation}
Denote
\begin{equation}
E_f=\{a_1,\cdots,a_l,\infty\}
\end{equation}
and let $b_1,\cdots,b_q$ be the zeros of $Q$ of the order $r_1,\cdots,r_q$, respectively, then
\begin{equation}\aligned
|f(w)-\bar{w}|&\leq C|w-a_i|\qquad\quad \text{near }a_i\ (1\leq i\leq l),\\
|f(w)-\bar{w}|&\leq C|w-b_j|^{-r_j}\qquad \text{near }b_j\ (1\leq j\leq q),\\
|f(w)-\bar{w}|&\leq C|w|^{m-n}\qquad\qquad \text{near }\infty
\endaligned
\end{equation}
and hence
\begin{equation}\label{f2}
|f(w)-\bar{w}|\leq C\left(\prod_{i=1}^l |w-a_i|\right)\left(\prod_{j=1}^q |w-b_j|^{-r_j}\right)\big(1+|w|^2\big)^{\f{m-l}{2}}.
\end{equation}
Let $(\psi_1,\psi_2,dh)$ be the W-data of $M$, then $\psi_2=f(\psi_1)$. (\ref{ef2}) says
\begin{equation}
\psi:=\psi_1
\end{equation}
is a holomorphic function. $p\in M$ is a zero of $dh$ if and only if $p$ is a pole of $\psi_2=f(\psi)$, with the same order. Hence
\begin{equation}
\om:=\prod_{j=1}^q (\psi-b_j)^{-r_j}dh
\end{equation}
is a holomorphic 1-form on $M$ with no zero. In conjunction with (\ref{f2}) and (\ref{phi5}), we see
\begin{equation}\label{metric7}
\aligned
d\hat{s}^2:=&\left(\prod_{i=1}^l |\psi-a_i|^2\right)\left(\prod_{j=1}^q |\psi-b_j|^{-2r_j}\right)\big(1+|\psi|^2\big)^{m-l}|dh|^2\\
=&\left(\prod_{i=1}^l |\psi-a_i|^2\right)\big(1+|\psi|^2\big)^{m-l}|\om|^2
\endaligned
\end{equation}
should be a complete metric on $M$.

Similarly as in \S \ref{parabolic}, we establish the following Bernstein type theorem for complete stationary surfaces with rational graphical Gauss image.
\begin{thm}\label{be1}
Let $M$ be a complete nonflat space-like stationary surface in $\R^{3,1}$, whose Gauss image is contained in the graph of a rational
function $f$ of degree $m$, i.e. $\psi_2=f(\psi_1)$, then the number of exceptional values of $\psi_1$ in $\C^*$ cannot exceed $m-|E_f|+3$, where
$E_f$ consists of all solutions of $f(w)=\bar{w}$.
\end{thm}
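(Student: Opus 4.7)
The plan is to extend the Fujimoto-style argument of Theorem~\ref{vdp} to the richer metric $d\hat{s}^2$ in~(\ref{metric7}), exploiting the extra factor $\prod_{i=1}^l|\psi-a_i|^2$ to enlarge the admissible exponents in the construction. After passing to the universal cover, $M$ is simply connected; if $M\cong\C$, nonflatness combined with Theorem~\ref{t_r2} forces $\psi:=\psi_1$ to be nonconstant, so Picard's little theorem yields $k\le 2$, which fits within $m-|E_f|+3$ in every nontrivial case. The main case is $M\cong\Bbb{D}$. Write $E_f=\{a_1,\ldots,a_l,\infty\}$ (so $|E_f|=l+1$), enumerate the exceptional values of $\psi$ in $\C^*$ as $c_1,\ldots,c_{k-1},c_k=\infty$ with $c_j=a_j$ for $1\le j\le l$, and set $\om=gdz$ with $g$ a nowhere-vanishing holomorphic function on $\Bbb{D}$. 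Assume for contradiction that $k\ge m-l+3$.

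Following~(\ref{eta}), introduce the many-valued function
\[
\eta:=g^{\f{1}{1-p}}(\psi')^{-\f{p}{1-p}}\prod_{j=1}^{k-1}(\psi-c_j)^{\f{\kappa_j p}{1-p}},
\]
with $p\in(1/2,1)$ and positive weights $\kappa_j$ to be selected. Assuming first that $\psi'$ is nowhere zero on $\Bbb{D}$, fix a single-valued branch and set $\mc{F}(z):=\int\eta\,dz$; the inverse $\mc{G}:=\mc{F}^{-1}$ extends to a maximal disk $\Bbb{D}(R)$ with $R<+\infty$ (by Liouville), and produces a divergent path $\g:=\mc{G}(\{tw_0:0\le t<1\})\subset\Bbb{D}$ for some $w_0\in\partial\Bbb{D}(R)$ obstructing further extension. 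A computation that mirrors the one leading to~(\ref{es1}) gives
\[
\mc{G}^*d\hat{s}^2=\Bigl(\prod_{i=1}^l|\psi-a_i|^2\Bigr)(1+|\psi|^2)^{m-l}|\psi'(w)|^{2p}\prod_{j=1}^{k-1}|\psi-c_j|^{-2\kappa_j p}|dw|^2,
\]
where $\psi'(w)$ denotes $d(\psi\circ\mc{G})/dw$.

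Equipping the Riemann surface $\C\setminus\{c_1,\ldots,c_{k-1}\}$ (hyperbolic since $k-1\ge 2$) with its complete Poincar\'e metric $\la^2|d\xi|^2$ of curvature $-1$, Schwarz's lemma yields $|\psi'(w)|\le 2R/((R^2-|w|^2)\la(\psi))$, so $\mc{G}^*d\hat{s}^2\le\Phi(\psi)(2R/(R^2-|w|^2))^{2p}|dw|^2$ with
\[
\Phi(\psi):=\Bigl(\prod_{i=1}^l|\psi-a_i|^2\Bigr)(1+|\psi|^2)^{m-l}\la(\psi)^{-2p}\prod_{j=1}^{k-1}|\psi-c_j|^{-2\kappa_j p}.
\]
A case-by-case use of the asymptotics~(\ref{es2}) shows $\Phi$ is bounded on $\C\setminus\{c_1,\ldots,c_{k-1}\}\cup\{\infty\}$ provided $\kappa_j<1$ for $c_j\notin E_f$, $\kappa_j<1+1/p$ for $c_j\in E_f\cap\C$, and $\sum_j\kappa_j>m/p+1$. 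The critical observation is $\sup\sum_j\kappa_j=k-1+l/p$, whence the $\infty$-bound reduces to $(k-2)p>m-l$; this admits $p\in(1/2,1)$ precisely when $k\ge m-l+3$. With $\Phi$ bounded, $\int_\g d\hat{s}\le C\int_0^R(2R/(R^2-r^2))^p\,dr<+\infty$ since $p<1$, contradicting the completeness of $d\hat{s}^2$ established in the setup of~(\ref{metric7}).

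The main obstacle, inherited from the proof of Theorem~\ref{vdp}, is dealing with zeros of $\psi'$. These are overcome exactly as in the second half of that proof: pass to the universal cover of $M\setminus\{\psi'=0\}$, perform the construction upstairs, and analyze the projection $\g$ of the resulting divergent path. Either $\g$ is divergent in $M$ (reproducing the contradiction above) or it tends to a zero $z_0$ of $\psi'$ of some order $s\ge 1$, near which $|\eta|\sim|z-z_0|^{-sp/(1-p)}$, so the condition $p>1/2$ forces the length integral of $\eta$ along $\g$ to diverge, contradicting that the image curve lies in the bounded domain $\Bbb{D}(R)$. The delicate step is the optimization of the $\kappa_j$: the enlarged range $\kappa_j<1+1/p$ for $c_j\in E_f\cap\C$, permitted by the $|\psi-a_i|^2$ factors in~(\ref{metric7}), contributes $l/p$ to $\sup\sum_j\kappa_j$, and it is precisely this gain that allows the weight at infinity to outrun the growth $(1+|\psi|^2)^{m-l}$, encoding the improvement from the naive Fujimoto bound to the sharp $q_1\le m-|E_f|+3$.
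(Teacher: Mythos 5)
Your proposal is correct and follows essentially the same route as the paper's proof: the same complete auxiliary metric $d\hat{s}^2$ from (\ref{metric7}), the same Fujimoto-type many-valued function $\eta$ with a divergent path obtained from the maximal inverse $\mc{G}$, the same Schwarz-lemma estimate against the hyperbolic metric on the complement of the omitted values, and the same $p>\f{1}{2}$ device for zeros of $\psi'$. The only difference is a cosmetic reparametrization of the exponents (individual weights $\kappa_j$ in place of the paper's single $\beta$, with $(1+\be)/(1-p)=\kappa p/(1-p)$), which leads to the identical numerical condition $(k-2)p>m-l$ and hence the same bound.
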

\noindent {\bf Remark. }For degenerate stationary surfaces, i.e. $m\leq 1$, the conclusion of Theorem \ref{be1} can be derived from
Theorem \ref{t_r2} ($m=0$, $|E_f|=1$), Theorem \ref{grph} ($m=1$, $|E_f|=2$), Theorem \ref{be2} ($m=1$, $|E_f|=0$) and Theorem \ref{vdp} ($m=1$,
$|E_f|=1$). Hence the following proof only concerns the cases of $m\geq 2$.

\begin{proof}
Without loss of generality we can assume $M$ is simply-connected. For $M=\Bbb{C}$, the result is a direct corollary of the Picard's theorem.
So it is sufficient to consider the case $M=\Bbb{D}$.

Denote $\om=gdz$, then $g$ is a holomorphic function on $\Bbb{D}$ with no zero. Besides all $l+1$ points in $E_f$, assume $\psi$ omits $c_1,\cdots,c_s\notin E_f$, so that
\begin{equation}
s+l+1\geq m-|E_f|+4=m-l+3,
\end{equation}
we shall consider
\begin{equation}
E:=\{z\in \Bbb{D}:\psi'(z)=0\},
\end{equation}
which cannot be the whole $\Bbb{D}$, since the nonflatness of $M$.

If $E=\emptyset$, let $\eta$ be a single-valued branch of the following many-valued function
\begin{equation}\label{mf}
z\in \Bbb{D}\mapsto g^{\f{1}{1-p}}\psi'(z)^{-\f{p}{1-p}}\left(\prod_{i=1}^l(\psi-a_i)^{\f{1+\be}{1-p}}\right)\left(\prod_{j=1}^s (\psi-c_j)^\f{\be}{1-p}\right),
\end{equation}
where $p\in (\f{1}{2},1)$, $\be\in (0,1)$ are constants to be chosen. Denote
\begin{equation}
\ze=\mc{F}(z):=\int \eta dz,
\end{equation}
then as in the proof of Theorem \ref{vdp}, we can find a positive number $R$ and $\ze_0\in \p \Bbb{D}(R)$, such that
$\mc{F}|_{\Bbb{D}(R)}$ has a holomorphic inverse map $z=\mc{G}(\ze)$, and
\begin{equation}
\g:t\in [0,1)\mapsto \mc{G}(t\ze_0)
\end{equation}
is a divergent curve in $\Bbb{D}$. A direct calculation based on (\ref{metric7}) and (\ref{mf}) yields
\begin{equation}\label{metric8}
\mc{G}^*d\hat{s}^2=\psi'(\ze)^{2p}(1+|\psi(\ze)|^2)^{m-l}\left(\prod_{i=1}^l |\psi(\ze)-a_i|^{-2\be}\right)\left(\prod_{j=1}^s|\psi(\ze)-c_j|^{-2\be}\right)|d\ze|^2.
\end{equation}
On the other hand, combining with the Schwarz's lemma and Navanlinna's theory, we get an estimate
\begin{equation}\label{es4}
\la(\psi(\ze))|\psi'(\ze)|\leq \f{2R}{R^2-|\ze|^2},
\end{equation}
where $\la$ is a position smooth function on $\C\backslash \{a_1,\cdots,a_l,c_1,\cdots,c_s\}$ satisfying
\begin{equation}\label{es5}\aligned
		\la\sim \f{1}{|\xi-a_i|\log|\xi-a_i|^{-1}}\quad \text{near }&a_i\\
       \la\sim \f{1}{|\xi-c_j|\log|\xi-c_j|^{-1}}\quad \text{near }&c_j\\
		\la\sim \f{1}{|\xi|\log|\xi|} \qquad\qquad\qquad
		\text{near }&\infty.
		\endaligned
		\end{equation}
In conjunction with (\ref{metric8}), (\ref{es4}) and (\ref{es5}) we have
\begin{equation}\label{es6}
\mc{G}^* d\hat{s}^2\leq \left( \f{2R}{R^2-|\ze|^2}\right)^{2p}|d\ze|^2
\end{equation}
whenever
\begin{equation}\label{p}
\be<p<-(m-l)+(l+s)\be.
\end{equation}
Now we take
\begin{equation}
\left\{\begin{array}{ll}
\be:=\f{1}{2}, p:=\f{2}{3} & \text{if }m=2,l=2,s=0\\
\be:=\f{s+l-2}{s+l-1}+\ep, p:=\f{s+l-2}{s+l-1}+2\ep & \text{otherwise}
\end{array}\right.
\end{equation}
with $\ep$ a sufficiently small positive number, then (\ref{p}) holds. By (\ref{es6}),
we see $\int_\g d\hat{s}<+\infty$, which causes a contradiction to the completeness of $(M,d\hat{s}^2)$.

For the case $E\neq \emptyset$, let $\pi$ be the universal covering map
from $\Bbb{D}$ onto $M_0:=\Bbb{D}\backslash E$, $\td{\psi}:=\psi\circ \pi$ and $\td{g}:=g\circ \pi$ be the lifting of $\psi$ and $g$, respectively, we consider
the many-valued function
\begin{equation}
\td{\eta}:=\td{g}^{\f{1}{1-p}}\td{\psi}'(\td{z})^{-\f{p}{1-p}}\left(\prod_{i=1}^l(\td{\psi}-a_i)^{\f{1+\be}{1-p}}\right)\left(\prod_{j=1}^s (\td{\psi}-c_j)^\f{\be}{1-p}\right).
\end{equation}
Similarly as above, we can find a divergent curve $\td{\g}:[0,1)\ra \td{M}$, satisfying $|\int_{\td{\g}}\td{\eta}d\td{z}|<+\infty$
and $\int_{\td{\g}}\pi^*d\hat{s}<+\infty$. If $\g:=\pi\circ \td{\g}$ is not a divergent curve, i.e. $\g(t)$ converges to $z_0\in E$
as $t\in 1^-$, we can derive a contradiction based on $p\in (\f{1}{2},1)$, as in the proof of Theorem \ref{vdp}. Hence $\g$
is a divergent curve in $\Bbb{D}$ with finite length, which also causes a contradiction. This completes the proof of Theorem \ref{be1}.
\end{proof}

In conjunction with Proposition \ref{ef}, we immediately get the following corollaries:
\begin{cor}
Let $M$ be a complete nonflat space-like stationary surface in $\R^{3,1}$. If the Gauss image of $M$ lies in the graph of a rational
function $f$ of degree $m$, then $|E_f|\leq \f{m+3}{2}$.
\end{cor}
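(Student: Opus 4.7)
The plan is to deduce the bound directly from Theorem \ref{be1} together with the elementary observation that $E_f$ is always contained in the set of exceptional values of $\psi_1$, so that no new analytic work is required beyond what has already been done.

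First I would verify the containment. Take any $w_0 \in E_f$, meaning $f(w_0) = \bar{w}_0$. If $\psi_1$ were to attain the value $w_0$ at some point $p \in M$, then
\begin{equation*}
\psi_2(p) = f(\psi_1(p)) = f(w_0) = \bar{w}_0 = \overline{\psi_1(p)},
\end{equation*}
which violates the pointwise constraint $\psi_2 \neq \bar{\psi}_1$ characterizing the image $Q_{1,1}^+$ of the Gauss map (cf.\ Proposition \ref{com_str}). Hence $\psi_1$ must omit every point of $E_f$, and in particular
\begin{equation*}
|E_f| \leq q_1,
\end{equation*}
where $q_1$ denotes the number of exceptional values of $\psi_1$ in $\C^*$.

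Next, under the nonflatness and completeness hypotheses, Theorem \ref{be1} yields $q_1 \leq m - |E_f| + 3$. Chaining the two inequalities gives $|E_f| \leq m - |E_f| + 3$, equivalently $|E_f| \leq \frac{m+3}{2}$, which is the claimed bound. The only mild loose end is that Theorem \ref{be1} was proved in the text for $m \geq 2$; for $m=0$ the function $f$ is constant so $|E_f|\in\{0,1\}\leq\frac{3}{2}$, and for $m=1$ the rational function $f$ is a M\"obius transformation, whence Theorem \ref{class3} together with nonflatness (which rules out the totally real subcase $[A]\sim[\mb{u}_T]$ where $|E_f|=\infty$, since every complete maximal surface in $\R^{2,1}$ is affine linear by \cite{Ca,C-Y}) forces $|E_f|\leq 2=\frac{m+3}{2}$.

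There is no serious obstacle in this final deduction: it is a one-line chain of inequalities once the two main inputs are on the table. All the depth of the corollary has already been absorbed into Theorem \ref{be1}, whose proof uses the Schwarz--Nevanlinna estimate together with the carefully tuned exponents $\beta,p$ arising in the many-valued function (\ref{mf}); and into the preliminary check that $\psi_1$ cannot hit $E_f$, which is purely algebraic.
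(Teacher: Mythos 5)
Your proof is correct and is essentially the paper's own (implicit) argument: the corollary is obtained by chaining the elementary containment $E_f\subset\{\text{exceptional values of }\psi_1\}$ (so $|E_f|\leq q_1$) with the upper bound $q_1\leq m-|E_f|+3$ of Theorem \ref{be1}, whose validity for $m\leq 1$ is covered by the remark preceding its proof. One small slip in your $m=1$ discussion: the subcase with $|E_f|=\infty$ is $[A]\sim[\mb{u}_S]$ (maximal surfaces in $\R^{2,1}$, Theorem \ref{class3}(a)), not $[A]\sim[\mb{u}_T]$ (minimal surfaces in $\R^3$, where $E_f=\emptyset$); the Calabi/Cheng--Yau rigidity you invoke is indeed the right tool, but for the former case.
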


\begin{cor}\label{be3}
Each complete space-like stationary surface in $\R^{3,1}$ whose Gauss image lies in the graph of a ration function of degree $m\geq 6$
has to be an affine space-like plane.
\end{cor}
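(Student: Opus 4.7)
The plan is to combine the previous corollary---which bounds $|E_f|\leq \f{m+3}{2}$ for any complete nonflat space-like stationary surface in $\R^{3,1}$ with $\psi_2=f(\psi_1)$---with Proposition \ref{ef}, which gives $|E_f|\geq m-1$ whenever $m\geq 2$. The corollary then reduces to a short arithmetic incompatibility, plus a brief reduction to handle the flat case.

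Suppose for contradiction that $M$ is complete with Gauss image lying in the graph of a rational function $f$ of degree $m\geq 6$, but $M$ is not an affine space-like plane. First I would treat the case where $M$ is nonflat. The previous corollary then yields $|E_f|\leq \f{m+3}{2}$, while Proposition \ref{ef} applies (since $m\geq 6\geq 2$) and gives $|E_f|\geq m-1$. Combining these forces $m-1\leq \f{m+3}{2}$, i.e.\ $m\leq 5$, contradicting $m\geq 6$.

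It remains to rule out the flat case. If $M$ is flat, Theorem \ref{t_r2} shows that at least one of $\psi_1,\psi_2$ is constant on $M$. If $\psi_1\equiv w_0$, then $\psi_2=f(\psi_1)\equiv f(w_0)$ is constant as well. If instead $\psi_2\equiv v_0$, then $\psi_1$ takes values only in the finite set $f^{-1}(v_0)$; since $\psi_1$ is meromorphic on the connected surface $M$, the open mapping theorem forces $\psi_1$ to be constant. In either subcase both components of the Gauss map are constant, so the Gauss image is a single point of $Q_{1,1}^+$, which means $M$ is an affine space-like plane, contradicting the choice of $M$.

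No step presents a genuine obstacle, because all the hard work has been done upstream: the Nevanlinna-style completeness estimate of Theorem \ref{be1} and the differential-topological degree count of Proposition \ref{ef} are what truly do the lifting, and the corollary itself is a pigeonhole consequence of their combination. The only mild subtlety---ruling out nontrivial flat examples in the presence of the algebraic constraint $\psi_2=f(\psi_1)$---is dispatched cleanly by Theorem \ref{t_r2} and a one-line open-mapping argument.
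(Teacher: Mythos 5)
Your proof is correct and follows essentially the same route the paper intends: combining the bound $|E_f|\leq \frac{m+3}{2}$ from the preceding corollary with the lower bound $|E_f|\geq m-1$ of Proposition \ref{ef} to force $m\leq 5$ in the nonflat case. Your explicit disposal of the flat case (via Theorem \ref{t_r2} and the observation that $\psi_2=f(\psi_1)$ with $f$ rational forces both components of the Gauss map to be constant, hence $M$ is an affine space-like plane) is a detail the paper leaves implicit, and it is handled correctly.
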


\noindent{\bf Remarks:}
\begin{itemize}
\item Let $M$ be the universal covering space of $\{w\in \C^*:w^{m-1}\neq 1\}$ with $2\leq m\leq 5$ and $\pi$ be the holomorphic covering function,
$f(w):=\f{1}{w^m}$,
then $\mb{x}:M\ra \R^{3,1}$ generated by the W-data
\begin{equation}
\psi_1=\pi, \psi_2=f\circ \pi, dh=\pi^*\left(\f{w^m dw}{(w^{m-1}-1)^2(w-1)^{5-m}}\right)
\end{equation}
yields a complete space-like stationary surface. This shows the quantitative
condition of Corollary \ref{be3} is optimal.
\item On the contrary, let $f(w):=\f{1}{w^m}$ and $\pi: M\ra \C^*\backslash \{c_1,\cdots,c_k\}$ be the universal covering function, where
 $m\geq 1$, $1\leq k\leq m+3$ and $c_i\in \C$ for each $1\leq i\leq k$, then $\mb{x}^*:M\ra \R^4$ given by the W-data
 \begin{equation}
\psi_1^*=\pi, \psi_2^*=f\circ \pi, dh^*=\pi^*\left(\f{w^m dw}{(w-c_1)^{m+4-k}(w-c_2)\cdots (w-c_k)}\right)
\end{equation}
is a complete minimal surface. In fact, following the arguments of K. Voss \cite{V} and H. Fujimoto \cite{F}, we can prove:
{\it Let $M$ be a nonflat complete minimal surface in $\R^4$, such that $\psi_2^*=f(\psi_1^*)$ with a rational function $f$
of degree $m$, then $\psi_1^*$ can omit at most $m+3$ points in $\C^*$.} Therefore, the condition of Theorem \ref{be1} on the number of exceptional values are strictly weaker than the corresponding Bernstein type theorem for minimal surfaces in $\R^4$.

\end{itemize}

	\bigskip\bigskip
	
	\bibliographystyle{amsplain}

\end{document}